\newcommand{\triv}{\textit{triv}}
\newcommand{\sSet}{\mathsf{sSet}}
\newcommand{\sCat}{\mathsf{sCat}}
\newcommand{\sSetr}{\mathsf{sSet}_0}
\newcommand{\sGrp}{\mathsf{sGrp}}
\DeclareMathOperator{\ob}{ob}
\DeclareMathOperator{\Exh}{Ex^{1,h}}
\DeclareMathOperator{\Ex}{Ex^1}
\DeclareMathOperator{\sd}{sd}
\newcommand{\Ddelta}{\Delta}
\newcommand{\Eb}{\mathbb{E}}
\newcommand{\into}{\hookrightarrow}
\newcommand{\onto}{\twoheadrightarrow}
\DeclareMathOperator{\Ch}{Ch}
\DeclareMathOperator{\Cl}{Cl}
\DeclareMathOperator{\rk}{rk}
\DeclareMathOperator{\PicZR}{Pic^{\mathbb{Z}}_R}
\DeclareMathOperator{\FermZ}{Ferm^{\mathbb{Z}}}
\DeclareMathOperator{\detZ}{det^{\mathbb{Z}}}
\DeclareMathOperator{\E}{\mathcal{E}}
\DeclareMathOperator{\Prok}{\mathsf{Pro}^{\textit{a}}_{\kappa}}
\newcommand{\Tc}{\mathcal{T}}
\newcommand{\Excat}{\mathsf{Cat}_{\textit{ex}}}
\newcommand{\Index}{\mathsf{Index}}
\newcommand{\Dc}{\mathcal{D}}
\newcommand{\Hc}{\mathcal{H}}
\newcommand{\Fred}{Fred}
\newcommand{\elTatec}{\mathsf{Tate}^{\textit{el}}_{\aleph_0}}
\newcommand{\twoTate}{2\text{-}\mathsf{Tate}^{\textit{el}}}
\newcommand{\Tatec}{\mathsf{Tate}_{\aleph_0}}
\newcommand{\Tatea}{\mathsf{Tate}_{\aleph_0}}
\newcommand{\elTatea}{\mathsf{Tate}^{\textit{el}}_{\aleph_0}}
\newcommand{\Indc}{\mathsf{Ind}^{\textit{a}}_{\aleph_0}}
\newcommand{\Tatek}{\mathsf{Tate}_{\kappa}}
\newcommand{\elTatek}{\mathsf{Tate}^{\textit{el}}_{\kappa}}
\newcommand{\elTatekp}{\mathsf{Tate}^{\textit{el}}_{\kappa'}}
\newcommand{\Spaces}{\mathsf{Spaces}}
\DeclareMathOperator{\Vect}{Vect}
\DeclareMathOperator{\Perf}{Perf}
\DeclareMathOperator{\coker}{coker}
\DeclareMathOperator{\ic}{ic}
\DeclareMathOperator{\Indk}{\mathsf{Ind}^{\textit{a}}_{\kappa}}
\DeclareMathOperator*{\colim}{\varinjlim}
\DeclareMathOperator*{\hocolim}{hocolim}
\DeclareMathOperator{\Mod}{\textsf{Mod}}
\DeclareMathOperator{\Hom}{\textsf{Hom}}
\DeclareMathOperator{\Pic}{Pic}
\newcommand{\C}{\mathsf{C}}
\newcommand{\Det}{Det}
\newcommand{\Gr}{Gr}
\DeclareMathOperator{\grp}{\times}
\DeclareMathOperator{\Set}{Set}
\DeclareMathOperator{\Pro}{\mathsf{Pro}^{\textit{a}}}
\DeclareMathOperator{\Ind}{\mathsf{Ind}^{\textit{a}}}
\DeclareMathOperator{\Fun}{Fun}
\DeclareMathOperator{\op}{op}
\DeclareMathOperator{\Cat}{Cat}
\DeclareMathOperator{\Kb}{\mathbb{K}}
\DeclareMathOperator{\Aut}{Aut}
\DeclareMathOperator{\GL}{GL}
\newcommand{\Zb}{\mathbb{Z}}
\DeclareMathOperator{\D}{\mathsf{D}}
\DeclareMathOperator{\Kk}{\mathcal{K}}
\DeclareMathOperator{\Gg}{\mathcal{G}}
\DeclareMathOperator{\Pc}{\mathcal{P}}
\DeclareMathOperator{\Spec}{Spec}
\DeclareMathOperator{\Tate}{\mathsf{Tate}}
\DeclareMathOperator{\elTate}{\mathsf{Tate}^{\textit{el}}}
\begin{document}
\newtheorem{definition}{Definition}[section]
\newtheorem{theorem}[definition]{Theorem}
\newtheorem{proposition}[definition]{Proposition}
\newtheorem{corollary}[definition]{Corollary}
\newtheorem{conj}[definition]{Conjecture}
\newtheorem{lemma}[definition]{Lemma}

\newtheorem{cl}[definition]{Claim}
\newtheorem{example}[definition]{Example}
\newtheorem{problem}[definition]{Problem}
\newtheorem*{nproblem}{Problem}
\newtheorem{claim}[definition]{Claim}
\newtheorem{ass}[definition]{Assumption}
\newtheorem{warning}[definition]{Warning}
\newtheorem{porism}[definition]{Porism}
\newtheorem{notation}[definition]{Notation}
\theoremstyle{remark}
\newtheorem{rmk}[definition]{Remark}
\theoremstyle{plain}

\title{The Index Map in Algebraic $K$-Theory}
\author{Oliver Braunling,\quad Michael Groechenig,\quad Jesse Wolfson}

\address{Department of Mathematics, Universit\"{a}t Freiburg}
\email{oliver.braeunling@math.uni-freiburg.de}
\address{Department of Mathematics, Imperial College London}
\email{m.groechenig@imperial.ac.uk}
\address{Department of Mathematics, University of California - Irvine}
\email{wolfson@uci.edu}

\begin{abstract}
%% changed relating to relate, removed two of the four uses of "relate"
For a ring $R$, {we give a new construction of the universal $K_R$-torsor $\mathcal{T}_{K}\rightarrow K_{\Tate(R)}$ constructed by Sho Saito}, and its analogue for general idempotent complete exact categories. We study the classifying map of this torsor in detail, showing that it is an equivalence, relate it to a boundary map in a $K$-theory localization sequence, construct an explicit simplicial model, and link it to the index theory of Fredholm operators. The torsor is also related to canonical central extensions of loop groups. Just like classical loop group theory has features of $K$-theory (e.g. determinant bundles, the tame symbol cocycle for Kac-Moody extensions), the $K$-theory torsor relates higher loop groups with higher $K$-theory. We compare the $K$-theory torsor to previously studied dimension and determinant torsors.
\end{abstract}

\thanks{O.B.\ was supported by DFG SFB/TR 45 ``Periods, moduli spaces and arithmetic of algebraic varieties'' and Alexander von Humboldt Foundation. M.G.\ was supported by EPRSC Grant No.\ EP/G06170X/1. J.W.\ was partially supported by an NSF Graduate Research Fellowship under Grant No.\ DGE-0824162, by an NSF Research Training Group in the Mathematical Sciences under Grant No.\ DMS-0636646, and by an NSF Post-doctoral Research Fellowship under Grant No.\ DMS-1400349. He was a guest of K. Saito at IPMU while this paper was being completed. Our research was supported in part by NSF Grant No.\ DMS-1303100 and EPSRC Mathematics Platform grant EP/I019111/1.}

\subjclass[2010]{19D55 (Primary), 19K56 (Secondary)}

\maketitle

\section{Introduction}

 This paper applies algebraic $K$-theory to solve a problem posed independently by Kapranov \cite{Kap:Bryl} and Drinfeld \cite{MR2181808}. To state the problem, recall that a central feature in the theory of loop groups is the Kac-Moody central extension. Over an equicharacteristic local field, Kapranov observed that this extension arises from a natural ``determinantal'' torsor.  Variations of this torsor appear in a wide array of geometric settings, including Kontsevich's construction of motivic integration \cite{motivic}, Kapranov and Vasserot's  construction of the chiral de Rham complex \cite{MR2102701}, Beilinson--Bloch--Esnault's work on de Rham $\varepsilon$-factors \cite{MR1988970}, Drinfeld's approach to the Uhlenbeck compactification of the moduli of vector bundles on an algebraic surface \cite{MR2181808} {\cite{MR3431633}}, and Chinburg--Pappas--Taylor's arithmetic higher Riemann-Roch theorem \cite{Chinburg:2012fk}.

 {
The theory of Tate objects in exact categories provides a natural framework to study all of these constructions in a unified manner. We refer the reader to Section \ref{sec:prelim} for a more detailed review. The following result of Saito \cite[Theorem 1.2]{Saito:2012uq} on the algebraic $K$-theory of Tate objects casts a new light on the aforementioned constructions.
\begin{theorem}[Saito]\label{saito}
For every idempotent complete exact category $\C$, we have an equivalence of non-connective $K$-theory spectra $\Omega\Kb_{\elTate(\C)} \simeq \Kb_{\C}$.
\end{theorem}

Even before Saito's work appeared, Kapranov and Drinfeld (the latter following Beilinson) observed that the structure of determinantal torsors was reminiscent of $K$-theory. This motivated the following.

}

    \begin{nproblem}\cite[Problem 5.5.3]{MR2181808}
        ``The notion of determinantal Torsor is very useful, and its rigorous interpretation in the standard homotopy-theoretic language of algebraic $K$-theory would be helpful.''\linebreak
  {Specifically, one has to
\begin{enumerate}[label=(\Alph*)]
\item Clarify what the term ``torsor'' should mean when talking about a generalized $K$-theory torsor.
\item Relate the torsor to the delooping (or the ``Calkin category'' in the context \textit{loc. cit.})
\item Show that the $K$-theory torsor indeed truncates to the graded determinant torsor.
\end{enumerate}}
    \end{nproblem}
 {Due to the higher homotopical nature of the $K$-theory space it is not immediately obvious what is meant by a $K$-theory torsor. While the notion of torsors over such objects has been implicit in the homotopy theory literature for some time, it has received renewed attention in the framework of $\infty$-topoi \cite{nikolaus2015principal}.  A short summary of this approach is given in \cite[Section 2]{Saito:2014fk}. Using this framework, Saito answered the first two parts of Drinfeld's problem as a consequence of his delooping theorem.  For the present article, we use the following minimalistic approach to torsors: we have an equivalence of $\infty$-categories (see Theorem 2.19 in \cite{nikolaus2015principal} or Lemma 7.2.2.1 in \cite{Lurie:bh})
   \[
  B\colon  \{\text{group objects in }\Spaces\} \simeq \{\text{pointed and connected spaces} \}.
   \]
  The functor $B$ will also be called the \emph{classifying space functor}. For a group object $G$ in the $\infty$-category of spaces, specifying a $G$-\emph{torsor} on a space $X$, say
\[
\mathcal{T}\to X,
\]
can be taken to mean to provide a map $X \rightarrow BG$, which is then called \emph{classifying map}; see Theorem 3.17 in \cite{nikolaus2015principal}. We may therefore always work in terms of classifying maps.

  Once this foundational question has been settled, the problem is divided into two tasks. First of all such a $K$-theory torsor needs to be defined and its basic properties proven, and secondly compatibility with the determinantal torsor needs to be established. The first part is addressed by Saito's papers \cite{Saito:2012uq} and \cite{Saito:2014fk}. He \textit{uses} the delooping to construct the torsor, so in his approach the sub-problem (B) is tautologically solved.

In this paper we develop a different point of view which yields a new construction of Saito's torsor. Firstly, we give an explicit simplicial construction of the classifying map, and then prove (B) in our new setting, which is non-trivial because of the very different construction. We then show agreement (up to sign) with Saito's torsor, and finally settle (C), the compatibility with the determinantal torsor.} Specialized to Drinfeld's setting, our main result is as follows.
    \begin{theorem}[{ Corollaries \ref{cor:index} \& \ref{cor:enhanced} \& Proposition \ref{prop:dettor}}]\label{thm:drn}
        Let $R$ be a ring, let $\Tate(R)$ be the category of Tate $R$-modules,
        %and let $B\Tate(R)^\times$ denote the classifying space of the groupoid of isomorphisms of Tate $R$-modules.
        Then there exists a natural $K_R$-torsor \begin{equation} \label{intro:torsor:ringver}
            \Tc_K\to \Tate(R)^\times \tag{$\star$}
        \end{equation}
        such that the following properties hold:
        \begin{enumerate}
            \item {\bf Multiplicativity}: { A short exact sequence $V_0\into V_1\onto V_2$ of Tate $R$-modules induces an equivalence of torsors
                \begin{equation*}
                    \Tc_K|_{V_0}\otimes_{K_R} \Tc_K|_{V_2}\to^\simeq \Tc_K|_{V_1},
                \end{equation*}
                and}
            \item {\bf Determinant}: The fiberwise 1-truncation of $\Tc_K\to \Tate(R)^\times$ is the (graded) determinantal torsor
                \begin{equation*}
                    \Det\to \Tate(R)^\times.
                \end{equation*}
        \end{enumerate}
    \end{theorem}

    \begin{corollary}
        Let $V$ be a Tate $R$-module.
   \begin{enumerate}
   \item The restriction of the $K_R$-torsor to $B\Aut(V)$ gives the {classifying map of a central extension of $\Aut(V)$ by the infinite loop space $\Omega K_R$.
   \item Composing with the map to the 1-truncation ${\tau}_{\leq 1}K_R$, this yields the Kac-Moody extension of $\Aut(V)$ by $\mathbb{G}_m=\Omega {\tau}_{\leq 1}K_R$}.
  \end{enumerate}
    \end{corollary}
{ Assertion (1) is tautological, since one defines a central extension of a group $G$ by a spectrum $\mathbb{E}$, in terms of its classifying map $G \to \Omega^{\infty} \Sigma^2\Eb$. Assertion (2) follows from Proposition \ref{prop:dettor}.}

%% taken out of a footnote and placed into text
 {This extension has also been intensely studied by M. Sato and his school, and the cocycle giving it is frequently referred to as the \emph{Japanese} cocycle.}
    We in fact prove this theorem for Tate objects in an arbitrary idempotent complete exact category $\C$. { In this generality, we obtain a $K_{\C}$-torsor
\[
\Tc_K\to \elTate(\C)^\times \text{,}
\]
taking the place of $\eqref{intro:torsor:ringver}$. We leave the details to the main body of the paper. Saito's Theorem \ref{saito} shows an abstract equivalence of non-connective $K$-theory spaces $\Omega\Kb_{\elTate(\C)} \simeq \Kb_{\C}$ and therefore indicates that a much stronger version of the multiplicativity assertion holds (as conjectured by Kapranov).}
    \begin{theorem}[{Corollaries \ref{cor:enhanced} \& \ref{cor:minuseins}}]\label{thm:torsor}
        Let $\C$ be an idempotent complete exact category.  The $K_{\C}$-torsor $\Tc_K\to \elTate(\C)^\times$, on the classifying space of elementary Tate objects in $\C$, extends to a $K_{\C}$-torsor on the $K$-theory space $K_{\elTate(\C)}$. The classifying map of this extended torsor,
        \begin{equation*}
            \Index\colon K_{\elTate(\C)}\to B K_{\C} \text{,}
        \end{equation*}
        { agrees with the map induced by Saito's $\Kb_{\elTate(\C)} \to \Sigma \Kb_{\C}$ up to sign, and hence}
        is an equivalence of infinite loop spaces.
    \end{theorem}

    \begin{rmk}
        To explain the name of this map, note that Tate objects can be understood as an abstraction of formal loop spaces, and the index map of the theorem then arises from an analogue of the construction of the index bundle of families of Toeplitz operators.
    \end{rmk}

    Our last main result establishes a close relation between the index map and boundary maps in $K$-theory.  To state the most important example, let $R$ be a ring, and denote by $P_f(R((t)))$ the exact category of finitely generated projective $R((t))$-modules. Let
    \begin{equation*}
        T\colon P_f(R((t)))\to\Tate(R)
    \end{equation*}
    be the canonical functor which sends the rank one free module $R((t))$ to the Tate $R$-module $R((t))$. {Combining the functoriality of boundary maps with Theorem \ref{thm:loc} one immediately obtains the following theorem.}

    \begin{theorem}[{\bf Boundary}]\label{thm:bdry}
        There exists a canonical homotopy commuting triangle (i.e. commuting triangle in the $\infty$-category of spaces)
        \begin{equation*}
            \xymatrix{
                K_{R((t))} \ar[d]_T \ar[rr]^{-\partial} && \tau_{\ge 0}\Sigma\Kb_R \\
                K_{\Tate(R)} \ar[urr]_{\Index}
            }.
        \end{equation*}
        where $\tau_{\ge 0}\Sigma\Kb_R$ denotes the (infinite looping) of the connective cover of the suspension of the non-connective $K$-theory spectrum of $R$.
    \end{theorem}

%
    %\begin{rmk}
        %Saito \cite{Saito:2012uq} showed that, for any idempotent complete exact category $\C$, the canonical map $K_{\elTate(\C)}\to K_{\Ind(\C)/\C}\to^{\partial} BK_{\C}$ is an equivalence. In \cite{Saito:2014fk}, he used this to obtain a universal $K_{\C}$-torsor on $K_{\elTate(\C)}$. Our results show that the index map is $-1$ times Saito's equivalence and that the torsor we consider is the dual of his. En route to this result, we also give a detailed study of boundary maps in algebraic $K$-theory, which may be of independent interest (cf. Section \ref{subsub:boundary}, esp. Propositions \ref{prop:waldbound} and \ref{prop:Schlichting}).
    %\end{rmk}

{For the rest of the introduction, let us list further results, which have not been part of Drinfeld's problem:}\medskip

    \subsection*{Operator Theory}
    We view the above as theorems primarily about the operator theory of Tate objects, rather than as theorems about algebraic $K$-theory. As such, for an elementary Tate object $V$, we pay particular attention to the composite
    \begin{equation*}
        B\Aut(V)\to K_{\elTate(\C)}\to^{\Index} BK_{\C}.
    \end{equation*}
    By looping this map, we obtain an $A_\infty$-map
    \begin{equation*}
        \Aut(V)\to K_{\C}.
    \end{equation*}
    In Section \ref{sub:indexaut} and in a sequel \cite{BGW:Segal}, we give two complementary constructions of the $A_\infty$-structure of this map, one combinatorial and suited to non-homotopical settings such as the study of $\Aut_{\Tate(R)}(R((t)))$, and one $\infty$-categorical and suited to homological settings such as the study of Tate objects in stable $\infty$-categories (cf. \cite{Hennion}). These constructions encode computations of the index of automorphisms of Tate objects, and of higher torsion invariants of tuples of such. In \cite{BGW:14}, we apply this to establish the compatibility of our $K$-theoretic definition of higher Contou-Carr\`ere symbols with the existing definitions in dimensions 1 and 2.

    \subsubsection*{{\bf Relation to Index of Fredholm Operators}}
    In Section \ref{sec:aj}, we explain how the map $\Index$ provides a precise analogue of the classical index map
    \begin{equation*}
        \Fred(\Hc)\to K^{top}_{\mathbb{C}}
    \end{equation*}
    from the space of Fredholm operators on complex Hilbert space to the classifying space of topological $K$-theory, cf. J\"anich \cite{Jan:65} and Atiyah \cite{Ati:67}. To stress this analogy, let us state this classical result back to back to our version for Tate objects:
    \begin{theorem}\label{thm:main_aj}\mbox{}
    \begin{enumerate}
        \item Let $(\Hc,\Hc^+)$ be a polarized, separable complex Hilbert space. The space of Fredholm operators $\Fred(\Hc^+)$ is equivalent to $$K^{top}_{\mathbb{C}}\cong\Omega\left(B\GL_{\mathrm{res}}(\Hc,\Hc^+)\right)^+,$$ where the outer superscript $+$ refers to Quillen's \emph{$+$-construction}.
        \item Let $R$ be a ring.  The $R$-module $R((t))$ admits a canonical structure of an elementary Tate object in the category of finitely generated projective $R$-modules, and we have an equivalence $$K_{R} \cong \Omega\left(B\Aut_{\Tate(R)}\,(\,R((t))\,)\right)^+.$$
        \end{enumerate}
    \end{theorem}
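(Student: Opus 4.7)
The plan is to handle the two parts separately. Part~(1) consolidates classical inputs (Atiyah-J\"anich plus Pressley-Segal), while part~(2) combines Theorem~\ref{thm:torsor} with a Quillen-style $+$-construction and group-completion argument.

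For part~(1), the equivalence $\Fred(\Hc^+)\simeq K^{top}_{\mathbb{C}}$ is the Atiyah-J\"anich theorem. For the identification with $\Omega(B\GL_{\mathrm{res}}(\Hc,\Hc^+))^+$, I would appeal to Pressley-Segal: the ``top-left block'' map $\binom{a\ b}{c\ d}\mapsto a$ is a weak equivalence $\GL_{\mathrm{res}}(\Hc,\Hc^+)\xrightarrow{\simeq}\Fred(\Hc^+)$, because once $a$ is fixed the remaining entries lie in contractible affine spaces of Hilbert-Schmidt operators. Since $\Fred(\Hc^+)$ is already grouplike ($\pi_0=\mathbb{Z}$), it coincides with $\Omega B\Fred(\Hc^+)$, and the $+$-construction on $B\GL_{\mathrm{res}}$ realizes the same group-completion.

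For part~(2), I first equip $R((t))$ with its canonical elementary Tate $R$-module structure using the lattice $R[[t]]\subset R((t))$, whose quotient is the discrete free $R$-module on $\{t^{-i}\}_{i\geq 1}$. Applying Theorem~\ref{thm:torsor} with $\Cc=P_f(R)$ yields an equivalence of infinite loop spaces
\[
\Index\colon K_{\elTate(P_f(R))} \xrightarrow{\simeq} BK_R,
\]
so $K_{\elTate(P_f(R))}$ is connected and $\Omega K_{\elTate(P_f(R))} \simeq K_R$. The tautological map $B\Aut_{\Tate}(R((t)))\to K_{\elTate(P_f(R))}\simeq BK_R$ factors through the $+$-construction (since $\pi_1 BK_R=K_0(R)$ is abelian), and I claim the resulting map $(B\Aut_{\Tate}(R((t))))^+\to BK_R$ is an equivalence; looping then gives the statement.

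The main obstacle is proving this last claim. The key technical input is the self-absorption $R((t))^{\oplus n} \cong R((t))$ as Tate $R$-modules, witnessed explicitly by $(f_1,\ldots,f_n)\mapsto \sum_{i=0}^{n-1} t^i f_{i+1}(t^n)$, which preserves the lattice $R[[t]]^{\oplus n}=R[[t]]$. This absorption supplies the $\oplus$-stabilization built into $GL_\infty(R)$ in Quillen's classical setting and allows a Whitehead-style argument to show that the commutator subgroup of $\Aut_{\Tate}(R((t)))$ is perfect, making the $+$-construction a homology equivalence onto a connected group-completion. On $\pi_1$, the induced map $\Aut_{\Tate}(R((t)))^{ab}\to K_0(R)$ is the abelianized lattice-index of a Tate automorphism; surjectivity follows from lattice-shifting automorphisms such as multiplication by $t^n$ realizing every $K_0$-class, and injectivity again uses absorption. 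On higher $\pi_i$, the map is induced from the equivalence of Theorem~\ref{thm:torsor} applied to the full $K$-theory space, so matching is automatic.
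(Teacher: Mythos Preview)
Your treatment of part~(1) is essentially the paper's: Atiyah--J\"anich gives $\Fred(\Hc^+)\simeq K^{top}_{\mathbb{C}}$, Pressley--Segal gives $\GL_{\mathrm{res}}(\Hc,\Hc^+)\simeq\Fred(\Hc^+)$, and since $\pi_1(B\GL_{\mathrm{res}})\cong\mathbb{Z}$ has trivial perfect radical, the $+$-construction on $B\GL_{\mathrm{res}}$ is a weak equivalence, so $\Omega(B\GL_{\mathrm{res}})^+\simeq\GL_{\mathrm{res}}$.

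For part~(2), you have the right ingredients but two gaps. The minor one: multiplication by $t^n$ only hits $n[R]\in K_0(R)$, which need not exhaust $K_0(R)$ for general $R$; you would need automorphisms of the form ``shift by $t$ on $P((t))$, identity on $Q((t))$'' for $P\oplus Q\cong R^n$, transported through your absorption isomorphism. The serious one is your claim that matching on higher $\pi_i$ is ``automatic from Theorem~\ref{thm:torsor}''. That theorem identifies $K_{\elTate(P_f(R))}$ with $BK_R$, but says nothing about the map from $(B\Aut(R((t))))^+$ into this space; invoking it here is circular. What is actually needed is a group-completion theorem asserting that the $K$-theory of the symmetric monoidal groupoid $\elTatec(R)^\times$ is computed by the $+$-construction on $B\Aut$ of a single absorbing object, and this is not a $\pi_i$-by-$\pi_i$ check.

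The paper closes this gap by first observing that $\elTatec(R)$ is \emph{split exact} \cite[Prop.~5.20]{Braunling:2014fk}, so its exact-category $K$-theory agrees with the $K$-theory of the underlying symmetric monoidal groupoid, and then invoking Weibel's criterion \cite[Thm.~4.4.10]{k-book}: one needs that $\Aut(a)\hookrightarrow\Aut(a\oplus b)$ for all $a,b$, and that there is a cofinal sequence $\{s_i\}$ in the sense that every object becomes a summand of some $\bigoplus_{i\le n} s_i$. Your absorption property (in the form: every countable elementary Tate $R$-module is a summand of $R((t))$, \cite[Prop.~5.21]{Braunling:2014fk}) supplies exactly this, with $s_0=R((t))$ and $s_i=0$ for $i>0$. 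Weibel's theorem then gives $\Omega(B\Aut(R((t))))^+\simeq\Omega K_{\elTatec(R)}$ in one stroke, and composing with the index equivalence of Theorem~\ref{thm:deloop} finishes. So your strategy is the paper's strategy, but the decisive step is the appeal to a packaged group-completion result rather than an ad hoc homotopy-group argument.
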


    \subsection*{Determinant Torsors and Central Extensions}
    In the spirit of Drinfeld's problem, the $K$-theory torsor of Theorem \ref{thm:torsor} unifies the constructions which have arisen in the literature (e.g. \cite{MR0619827}, \cite{Kapranov:fk}, \cite{MR2656941}, \cite{MR1988970}, \cite{MR2972546}), and extends them from one and two-fold loop groups, to general linear groups over arbitrary higher local fields. Further, the present treatment allows one to bring algebraic $K$-theory to bear on the study of these objects. We discuss these connections in detail in Section \ref{sub:torsor}. In particular, we show the following {(see Subsections \ref{sub:dimtor}-\ref{sec:2ger})}.
    \begin{theorem}\mbox{}
        Let $R$ be a commutative ring.
        \begin{enumerate}
            \item The 1-truncation of the index map
                \begin{equation*}
                    B(\Tate(R)^\times)\to B\Zb
                \end{equation*}
                classifies the dimension torsor on the category of Tate $R$-modules, cf. \cite[Section 3.5]{MR2181808}.
            \item The 2-truncation of the index map
                \begin{equation*}
                    B(\Tate(R)^\times)\to B\Pic^{\Zb}_R
                \end{equation*}
                classifies the graded determinantal torsor on the category of Tate $R$-modules, cf. \cite[Section 5.3]{MR2181808}.
            \item Denote by ${2\text{-}\elTate(R)}$ the category of elementary Tate objects in $\Tate(R)$. The 3-truncation of the index map
                \begin{equation*}
                    B(2\text{-}\elTate(R)^\times)\to B^2\Pic^{\Zb}_R
                \end{equation*}
                classifies the torsors of ``graded gerbal theories'' on the category of $2\text{-}\elTate(R)$. For $R$ a field, this coincides with the 2-gerbe constructed by Arkhipov and Kremnitzer \cite{MR2656941}.
                \item { The morphism $- \Index$ corresponds to the torsor constructed by Saito in \cite{Saito:2014fk}.}
        \end{enumerate}
    \end{theorem}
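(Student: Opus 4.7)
The plan is to deduce (1)--(3) from Theorem~\ref{thm:torsor} by applying successive Postnikov truncations to the universal index torsor and identifying each truncated target with the classical Picard $n$-groupoid appearing on the right-hand side. Theorem~\ref{thm:torsor} supplies the equivalence $\Index\colon K_{\elTate(R)}\xrightarrow{\simeq}BK_R$, and restriction along $B(\Tate(R)^\times)\hookrightarrow K_{\elTate(R)}$ recovers the classifying map of the $K_R$-torsor of Theorem~\ref{thm:drn}.

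For part~(1), $\tau_{\le 1}BK_R = BK_0(R)$, and post-composition with the rank map $K_0(R)\to\Zb$ produces a map $B(\Tate(R)^\times)\to B\Zb$. That this recovers Drinfeld's dimension torsor follows from the multiplicativity of Theorem~\ref{thm:drn}(1), which forces additivity on $\pi_1$, combined with the normalisation of $\Index$ on a single lattice (where the boundary description of Theorem~\ref{thm:bdry} gives $1$ as the index of the standard inclusion $R[[t]]\hookrightarrow R((t))$). For part~(2), I would invoke Theorem~\ref{thm:drn}(2): the fibrewise $1$-truncation of $\Tc_K$ \emph{is} the graded determinantal torsor. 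Promoting this to a statement about truncated classifying maps amounts to identifying $\tau_{\le 2}BK_R$ with $B\Pic^{\Zb}_R$ via the rank-determinant map $K_0(R)\to\Zb\oplus\Pic(R)$ and the determinant-of-units map $K_1(R)\to R^\times$, the Koszul sign rule being built into the Picard groupoid structure of $\Pic^{\Zb}_R$.

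For part~(3), I would iterate Theorem~\ref{thm:torsor}, applied to $\Cc = \Tate(R)$, to produce
\[
K_{2\text{-}\elTate(R)}\xrightarrow{\simeq}BK_{\Tate(R)},
\]
and then combine this with the first-order equivalence to land in $B^2K_R$ after suitable truncation. Restricting to $B(2\text{-}\elTate(R)^\times)$ and taking the $3$-truncation, the identification of part~(2) then supplies the target $B^2\Pic^{\Zb}_R$. The main obstacle is the identification of the resulting torsor with the theory of graded gerbal theories in general, and with the Arkhipov--Kremnitzer $2$-gerbe of \cite{MR2656941} over a field. My approach would be to argue that both constructions are determined up to canonical equivalence by multiplicativity in admissible short exact sequences of $2$-Tate objects together with their restriction to a single elementary $2$-lattice, thereby reducing the comparison to the graded determinantal case handled in part~(2). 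The hardest part is the $2$-categorical coherence book-keeping: organising the tensor products of graded determinants defining the Arkhipov--Kremnitzer cocycle into the $3$-truncation of the iterated index torsor requires careful tracking of associator and braiding data in the Picard $2$-groupoid $\Pic^{\Zb}_R$, and this is where I expect most of the work to lie.
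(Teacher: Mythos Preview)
Your proposal has a circularity problem in part~(2). You write that you would ``invoke Theorem~\ref{thm:drn}(2): the fibrewise $1$-truncation of $\Tc_K$ \emph{is} the graded determinantal torsor.'' But that assertion is precisely the content of part~(2) of the theorem you are trying to prove; in the paper, Theorem~\ref{thm:drn} is an announcement in the introduction whose proof \emph{is} the material in Section~\ref{sub:torsor}. So you cannot use it as an input here.

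More broadly, your top-down approach via abstract Postnikov truncation plus multiplicativity does not, by itself, identify the classified torsor with the \emph{specific} classical construction (Kapranov's dimension theories, Drinfeld's determinantal theories, Arkhipov--Kremnitzer's gerbal theories). Those classical objects are defined concretely as spaces of functions $\Gr(V)\to\Zb$, or $\Gr(V)\to\PicZR$, satisfying explicit cocycle conditions on nested chains of lattices. The paper's argument (Propositions~\ref{prop:dimtor} and~\ref{prop:dettor}, and Section~\ref{sec:2ger}) exploits the fact that the index map is \emph{constructed} as the geometric realization of an explicit simplicial map $\Gr_\bullet^\le(P_f(R))^\times\to S_\bullet(P_f(R))^\times$. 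Post-composing with $\rk$ or $\detZ$ gives a simplicial map to $B_\bullet\Zb$ or $B_\bullet\PicZR$; since $B_0=\ast$, the associated torsor is canonically trivialized on the cover $\Gr_0^\le\to\elTate(R)^\times$, and the descent data on overlaps is read off directly from the $1$- and $2$-simplices as $\rk(L'/L)$ or $\detZ(L'/L)$. This \emph{is} the classical definition of a dimension (resp.\ determinantal) theory, so the identification is immediate rather than a coherence verification. Your multiplicativity-and-normalization argument could in principle be made to work, but it would amount to recharacterizing these classical torsors by a universal property and then checking that both sides satisfy it---more work, not less, than the direct simplicial computation.
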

    A primary motivation for studying these torsors is that they allow one to define higher Kac-Moody extensions for iterated loop groups, or, more generally, for reductive groups over higher local fields.  A key feature of these extensions, which we pursue in depth in a sequel \cite{BGW:14}, is that they deform over families of higher local fields.  In \emph{loc. cit.}, we use the tools of this paper to relate the 1 and 2-dimensional Contou-Carr\`ere symbols to these $K$-theoretic extensions.  We also use $K$-theory to \emph{define} the higher Contou-Carr\`ere symbol over a family of $n$-dimensional local fields, and we then use standard techniques of $K$-theory to give a short, conceptually simple proof of reciprocity laws in all dimensions.

    In the present paper, we restrict our attention to the torsors which have previously appeared in the literature, recovering those discussed above, and also constructing a torsor sketched by Frenkel and Zhu \cite{MR2972546} (see Section \ref{sec:FZ} for more details). As Frenkel and Zhu emphasize, this torsor should be the starting point of a ``geometric representation theory'' of higher extensions of double loop groups.  To date, only tentative steps have been taken toward understanding the representation theory of iterated loop groups, e.g. \cite{MR2100671}, \cite{MR2192062}, \cite{MR2214248}, \cite{MR2656941}, \cite{MR2972546}, and \cite{Safranov}. We hope that the methods of this paper should enable future progress in this direction.

\subsection*{Outline of the Paper}
    This paper is formulated in the language of Tate objects in exact categories and algebraic $K$-theory. In Section \ref{sub:exact} we recall the relevant background on exact categories and Tate objects.  A primary reference for exact categories is B\"uhler's excellent survey \cite{MR2606234}, while our earlier article \cite{Braunling:2014fk} contains the necessary background on Tate objects. We next proceed, in Section \ref{sub:k}, to recall key facts about the algebraic $K$-theory of exact categories, following Waldhausen's framework \cite{MR0802796}.  In particular, we recall in some detail Waldhausen's construction of the cofiber sequence associated to a map of exact categories; this will form the basis for our comparison of the index map and the boundary map in $K$-theory. Experts in $K$-theory should feel free to skip this section, though we remark that the material on ``left path spaces'' (i.e. Definition \ref{def:relwald} -- Proposition \ref{prop:Slprops}) is not in Waldhausen and may prove useful beyond the present work. With this in hand, we provide, in Section \ref{subsub:boundary}, a precise treatment of boundary maps in the $K$-theory localization sequence (cf. Proposition \ref{prop:waldbound}). While this description is strongly suggested by Waldhausen, it is not established in \emph{loc. cit.} and we were unable to find an independent reference. In the process of giving this description, we also pin down a sign ambiguity in the boundary map in the $K$-theory localization sequence (cf. Proposition \ref{prop:Schlichting}).

    In Section \ref{index}, we construct the index map and establish its key properties as detailed in Theorems \ref{thm:drn}, \ref{thm:torsor} and \ref{thm:bdry}. En route, we construct a simplicial map whose geometric realization is homotopy equivalent to the index map; this gives a ``Kan-ian'' construction of an $A_\infty$-homomorphism $\Aut(V)\to K_{\C}$ and encodes index and torsion invariants of automorphisms of elementary Tate objects. In a sequel \cite{BGW:Segal}, we revisit the question of the $A_\infty$-structure, and give a ``Segal-ian'' construction of this map, which has the benefit of working mutatis mutandis for Tate objects in stable $\infty$-categories as considered by Hennion.

    In Section \ref{sec:aj} we compare our constructions with classical ones. We compare the index map for Tate objects to the index theory of Fredholm operators.  We then construct the $K$-theory torsor, describe its sections, and study its truncations in connection with previous work.

\subsection*{Acknowledgements}
We thank Y. Kremnitzer for introducing us first to these questions and then to each other.  We thank E. Getzler, M. Kapranov, R. Nest and B. Tsygan for helpful conversations. We would like to thank T. Hausel for supporting a visit of the first and the third author to EPF Lausanne, where part of this work was carried out. J.W. also thanks K. Saito and IPMU for the pleasant working conditions while this article was being completed. We would like to thank A. Beilinson and V. Drinfeld for supporting a visit of the first and second author to the University of Chicago, where this paper was completed. We thank I. Zakharevich for pointing out a mistake in an earlier version of the paper. {We thank the anonymous referee for greatly improving the presentation of the material.}

\section{Preliminaries}\label{prelim}\label{sec:prelim}
\subsection{Exact Categories}\label{sub:exact}
The first paragraph of this subsection is devoted to the definition of exact categories, and related notions. In Paragraph \ref{subsub:tate} we recall the basics of Ind, Pro, and Tate objects, which form the main players of the present text.

\subsubsection{Definitions}\label{subsub:defi}
For the convenience of the reader, we recall the definition of exact categories below. More details can be found in B\"{u}hler's excellent survey \cite{MR2606234}.

\begin{definition}\label{defi:exact}
We denote by $\C$ an \emph{additive} category.
\begin{itemize}
    \item[(a)] A \emph{kernel-cokernel pair} is given by maps
    \begin{equation*}
        X\hookrightarrow Y\twoheadrightarrow Z
    \end{equation*}
    with $X\hookrightarrow Y$ being the kernel of $Y\twoheadrightarrow Z$, and $Y\twoheadrightarrow Z$ being the cokernel of $X\hookrightarrow Y$.

    \item[(b)] The structure of an \emph{exact category} on $\C$ is given by a class $\E\C$ of kernel-cokernel pairs. A map $X\hookrightarrow Y$, serving as the kernel in a kernel-cokernel pair in $\E\C$, is called an \emph{admissible monic}.  Similarly, a map $Y \twoheadrightarrow Z$, which serves as a cokernel in a kernel-cokernel pair of $\E\C$, is called an \emph{admissible epic}. The following axioms have to be satisfied:
    \begin{enumerate}
        \item identity morphisms are admissible monics and admissible epics,
        \item the composition of two admissible monics is an admissible monic, similarly for admissible epics,
        \item pushouts of admissible monics along arbitrary maps exist, and are again admissible monics; similarly, pullbacks of admissible epics along arbitrary maps exist and are again admissible epics.
    \end{enumerate}
    \item[(c)] An additive functor $F\colon \C \to \D$ is called \emph{exact}, if it maps $\E\C$ to $\E\D$. If $F\colon\C\to\D$ is fully faithful and exact, we call it \emph{fully exact}, if every kernel-cokernel pair $F(X)\hookrightarrow F(Y)\twoheadrightarrow F(Z)$ in $\E\D$ stems from a kernel-cokernel pair in $\C$.
    \item[(d)] We note by $\Excat$ the 2-category of exact categories, exact functors, and natural transformations.
\end{itemize}
\end{definition}

We will refer to the kernel-cokernel pairs in $\E\C$ as \emph{short exact sequences} or \emph{extensions} in the exact category $\C$.

\begin{example}
For a ring $R$ we denote by $P_f(R)$ the additive category of finitely generated projective modules. Considering kernel-cokernel pairs obtained from short exact sequences of $R$-modules, with all constituents being finitely generated projective, we obtain a natural exact structure on this category.
\end{example}

The next definition recalls the notion of being \emph{idempotent complete}. This notion is reminiscent of the properties of linear projectors, familiar from linear algebra.

\begin{definition}
We say that an additive category $\C$ is \emph{idempotent complete}, if every idempotent splits, i.e. for every morphism $p\colon X\to X$, satisfying $p^2=p$, we have an isomorphism $X\cong Y\oplus Z$ taking $p$ to the idempotent $0 \oplus 1_Z$.
\end{definition}

Every exact category can be embedded into an idempotent complete exact category, in an essentially unique way.

\begin{proposition}(cf. \cite[Proposition 6.10]{MR2606234})
For every exact category $\C$ there exists a fully exact embedding $\C\hookrightarrow\C^{\ic}$ into an idempotent complete exact category, which is $2$-universal with respect to this property.
\end{proposition}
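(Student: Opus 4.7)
The plan is to construct $\C^{\ic}$ as the Karoubi envelope of $\C$ and then equip it with the natural exact structure inherited from $\C$. Concretely, the objects of $\C^{\ic}$ are pairs $(X, p)$ with $X \in \C$ and $p\colon X \to X$ an idempotent endomorphism, and a morphism $(X, p) \to (Y, q)$ is a morphism $f\colon X \to Y$ in $\C$ satisfying $q f p = f$. Composition is inherited from $\C$. I would verify that $\C^{\ic}$ is additive with biproduct $(X, p) \oplus (Y, q) = (X \oplus Y,\, p \oplus q)$, and that it is idempotent complete: any idempotent $e$ on $(X, p)$, viewed as a map in $\C$, satisfies $pe = ep = e$ and $e^2 = e$, and it splits as $(X, p) \cong (X, e) \oplus (X, p-e)$. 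The embedding $\iota\colon \C \hookrightarrow \C^{\ic}$ sends $X \mapsto (X, \id_X)$.

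Next, I would equip $\C^{\ic}$ with an exact structure by declaring a kernel-cokernel pair to be admissible if and only if it is a retract, in the category of sequences in $\C^{\ic}$, of an admissible short exact sequence of $\C$ regarded via $\iota$. The main technical work lies in verifying the axioms of Definition \ref{defi:exact} for this class. Stability of admissible monics and epics under composition reduces by a direct diagram chase to the analogous property in $\C$, together with the summand decomposition of objects in $\C^{\ic}$. The delicate axiom is the existence and stability of pushouts of admissible monics and pullbacks of admissible epics, which is the precise content of \cite[Proposition 6.10]{MR2606234}; I would cite B\"uhler's argument here.

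For the embedding $\iota$ to be fully exact, I would show that any kernel-cokernel pair $X \hookrightarrow Y \twoheadrightarrow Z$ in $\C$ which is admissible when regarded in $\C^{\ic}$ was already admissible in $\C$. By definition, such a sequence is a retract of some admissible sequence in $\C$, and the general fact that the class of short exact sequences in any exact category is closed under retracts (see the discussion of closure properties in \cite[Chapter 7]{MR2606234}) gives admissibility in $\C$.

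Finally, for the $2$-universal property, I would argue that for any idempotent complete exact category $\D$, the restriction functor from exact functors $\C^{\ic} \to \D$ to exact functors $\C \to \D$ is an equivalence of categories. Given $F\colon \C \to \D$ exact, one extends it to $\tilde F\colon \C^{\ic} \to \D$ by sending $(X, p)$ to the image of the idempotent $F(p)$, which exists since $\D$ is idempotent complete. Exactness of $\tilde F$ follows from exactness of $F$ and closure of exact sequences in $\D$ under retracts, and any two extensions of $F$ are related by a canonical natural isomorphism coming from the essential uniqueness of splittings of idempotents. The principal obstacle is the verification that the class of kernel-cokernel pairs defined above on $\C^{\ic}$ really forms an exact structure, in particular the pushout-pullback axiom; this requires carefully tracking how the idempotents defining objects of $\C^{\ic}$ interact with pushouts and pullbacks of admissible sequences in $\C$, and is the heart of the argument in \cite{MR2606234}.
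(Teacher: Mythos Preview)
The paper does not actually give a proof of this proposition; it simply records the statement and cites B\"uhler's survey \cite[Proposition 6.10]{MR2606234} for the construction. Your sketch is a reasonable outline of the standard argument (Karoubi envelope plus the inherited exact structure), and indeed this is essentially what B\"uhler does, so there is nothing to compare.
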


Following Schlichting \cite[Def. 1.3 \& 1.5]{MR2206639}, we recall the notion of left s-filtering subcategories of exact categories. The perk of left s-filtering inclusions is that a corresponding quotient can be formed in the $2$-category of exact categories. The definition given below is an equivalent, but simpler version, which was communicated to us by B\"uhler (see \cite[App. A]{Braunling:2014fk} for a reproduction of B\"uhler's argument, which compares the definition below with Schlichting's).

\begin{definition}\label{defi:sfilt}
Let $\C \hookrightarrow \D$ be a fully faithful exact functor between exact categories.
\begin{itemize}
\item[(a)] The inclusion is \emph{left special}, if for every object $Z \in \C$, and every admissible epic $G \twoheadrightarrow Z$ in $\D$, we have a commutative diagram with exact rows in $\D$
\[
\xymatrix{
X \ar@{^{ (}->}[r] \ar[d] & Y \ar@{->>}[r] \ar[d] & Z \ar[d]^{1_Z} \\
F \ar@{^{ (}->}[r] & G \ar@{->>}[r] & Z,
}
\]
with the top row being an extension in $\C$. The inclusion is called \emph{right special}, if $\C^{\op} \hookrightarrow \D^{\op}$ is left special.
\item[(b)] The inclusion is \emph{left filtering}, if every morphism $Y \to F$ in $\D$, with $Y \in \C$, factors through an admissible monic $Z \hookrightarrow F$ with $Z\in\C$:
\[
\xymatrix{
Y \ar[r] \ar[rd] & F \\
& Z. \ar@{^{ (}->}[u]
}
\]
It is called \emph{right filtering} if $\C^{\op} \hookrightarrow \D^{\op}$ is left filtering.
\item[(c)] It is \emph{left s-filtering}, if it is both left special and left filtering. It is called \emph{right s-filtering} if $\C^{\op} \hookrightarrow \D^{\op}$ is left s-filtering.
\end{itemize}
\end{definition}

With this definition in hand, we are able to recall Schlichting's exact quotient category $\D/\C$.

\begin{definition}\label{defi:quotient}
Consider a left s-filtering inclusion of exact categories $\C \hookrightarrow \D$ (see Definition \ref{defi:sfilt}).
\begin{itemize}
\item[(a)] Let $\Sigma_e$ be the class of morphisms in $\D$, which are admissible epics with kernel in $\C$. The left s-filtering condition guarantees that $\Sigma_e$ satisfies a calculus of left fractions (see \cite[Lemma 1.13]{MR2079996}).\footnote{Conversely, as we learned from private correspondence with B\"uhler, if $\Sigma_e$ satisfies a calculus of left fractions, then $\C$ is left filtering and closed under extensions in $\D$.}
\item[(b)] Following \cite[Def. 1.14]{MR2079996}, the localization $\D[\Sigma_e^{-1}]$ will be denoted by $\D/\C$. It inherits an exact structure, by considering the images of short exact sequences in $\D$ in the additive category $\D/\C$ (\cite[Prop. 1.16]{MR2079996}).
\end{itemize}
\end{definition}

\subsubsection{Admissible Ind, Pro and Tate Objects}\label{subsub:tate}
This paragraph provides an informal introduction to Tate objects in exact categories.  Full details are given in \cite{Braunling:2014fk}. Let $\C$ be an exact category.  In \emph{loc. cit.}, the authors defined, for an infinite cardinal $\kappa$,
\begin{enumerate}
    \item the exact category $\Indk(\C)$ of \emph{admissible Ind-objects} in $\C$ (\cite[Def. 3.3]{Braunling:2014fk}),
    \item the exact category $\Prok(\C)$ of \emph{admissible Pro-objects} in $\C$ (\cite[Def. 4.1]{Braunling:2014fk}),
    \item the exact category $\elTatek(\C)$ of \emph{elementary Tate objects} in $\C$ (\cite[Def. 5.1]{Braunling:2014fk}), and
    \item the exact category $\Tatek(\C)$ of \emph{Tate objects} in $\C$ as the idempotent completion of $\elTatek(\C)$ (\cite[Def. 5.26]{Braunling:2014fk}).
\end{enumerate}

The study of these constructions goes back at least to Lefschetz \cite[Ch. II.25]{MR0007093}, Artin--Mazur \cite{MR0245577}, Kato \cite{MR1804933}, Beilinson \cite{MR923134}, and Kapranov \cite{MR1800352}. For $\kappa=\aleph_0$, a recent treatment has also been given by Previdi in \cite{MR2872533}.  For a ring $R$, Drinfeld \cite{MR2181808} has studied a related notion of Tate $R$-module.  The category of countably generated Tate $R$-modules in Drinfeld's sense is equivalent to the category $\Tate_{\aleph_0}(P_f(R))$ (\cite[Thm. 5.30]{Braunling:2014fk}).  In general, Drinfeld's category of Tate modules is a fully exact sub-category of $\Tate(P_f(R))$.  For uncountable cardinalities, the authors provide a geometric interpretation in terms of \emph{flat Mittag-Leffler modules} of the category $\Tate(P_f(R))$ in \cite{Braunling:2014fk}, based on work by \v S\v tov\'\i\v cek and Trlifaj (\cite[App. B]{Braunling:2014fk}).

For the exact category $\Vect_f(k)$ of finitely generated vector spaces over a discrete field $k$, the category $\Indk(\Vect_f(k))$ is equivalent to the category of discrete vector spaces generated by a basis of cardinality at most $\kappa$.  A guiding example is the vector space $k[x]\in\Indc(\Vect_f(k))$.  The category $\Prok(\Vect_f(k))$ is equivalent to the category of topological duals of discrete vector spaces of cardinality at most $\kappa$.  The topological vector space $k[[t]]\cong(k[x])^{\vee}$ is an important example, where the topology on $k[[t]]$ is the $t$-adic topology, i.e. the finest linear topology such that the sequence $\{t^n\}_{n\in\mathbb{N}}$ converges to $0$.  The category $\elTatek(\Vect_f(k))$ is equivalent to the category of topological vector spaces of the form $V\oplus W^{\vee}$ where $V$ and $W$ are discrete vector spaces of cardinality at most $\kappa$. By definition, this is Lefschetz's category of \emph{locally linearly compact} vector spaces \cite[Ch. II.25]{MR0007093}. The archetypical example is the topological vector space $k((t))\cong k[[t]]\oplus t^{-1}k[t^{-1}]\in\elTatek(\Vect_f(k))$.

The categories of admissible Ind-objects, admissible Pro-objects, and elementary Tate objects are related by a commuting square of fully exact embeddings
\begin{equation}\label{eqn:fundamental_diagram}
\xymatrix{
\C \ar@{^{(}->}[r] \ar@{^{(}->}[d] & \Indk(\C) \ar@{^{(}->}[d] \\
\Prok(\C) \ar@{^{(}->}[r] & \elTatek(\C)
}
\end{equation}

The inclusion functors in this diagram are well-behaved with respect to taking quotients (see Definition \ref{defi:sfilt}).

\begin{proposition}\label{prop:filtering}(\cite[Prop. 3.10, 5.8, 5.10 \& 5.32]{Braunling:2014fk})
Let $\C$ be an exact category. The inclusions $\C \hookrightarrow \Indk(\C)$ and $\Prok(\C) \hookrightarrow \elTatek(\C)$ are left s-filtering. The inclusion $\Indk(\C) \hookrightarrow \elTatek(\C)$ is right filtering. The quotient exact categories $\Indk(\C)/\C$ and $\elTatek(\C)/\Prok(\C)$ are equivalent with respect to the map induced by the inclusion $$(\Indk(\C),\C) \hookrightarrow (\elTatek(\C),\Prok(\C))$$
of pairs of exact categories.
\end{proposition}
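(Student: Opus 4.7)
The plan is to establish the four claims in order, leaning on three structural properties of Ind-, Pro-, and Tate-categories proved separately in \cite{Braunling:2014fk}: (i) any morphism from an object of $\C$ to an admissible Ind-object factors through a finite stage, reflecting $\Hom(Y,\dlim X_i) = \colim_i \Hom(Y,X_i)$; (ii) any morphism from an admissible Pro-object to an admissible Ind-object factors through an object of $\C$; and (iii) every elementary Tate object $V$ admits a \emph{lattice}, i.e.\ an admissible sub-object $L \into V$ with $L \in \Prok(\C)$ and $V/L \in \Indk(\C)$ (essentially by definition). I would treat the two s-filtering claims, then the right filtering claim, and finally the quotient equivalence, which is the main obstacle.

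For $\C \into \Indk(\C)$, left filtering is immediate from (i): the structural map from any finite stage into the Ind-colimit is an admissible monic, since all transition maps are. For left special, given an admissible epic $G \onto Z$ with $Z \in \C$, one presents $G$ by an admissible Ind-diagram $\dlim G_i$ and invokes a level representation of admissible epics in $\Indk(\C)$ to model $G \onto Z$ by an admissible epic $G_{i_0} \onto Z$ in $\C$; its kernel $X_{i_0}$ then supplies the sub-extension in $\C$ required by Definition \ref{defi:sfilt}(a). The dual argument applied to $\C^{\op} \into \Indk(\C^{\op}) \simeq \Prok(\C)^{\op}$ furnishes right s-filtering of $\C \into \Prok(\C)$, which is used below.

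For $\Prok(\C) \into \elTatek(\C)$: for left filtering, given $L' \to V$ with $L' \in \Prok(\C)$ and $V$ elementary Tate, choose a lattice $L \into V$ by (iii); the composite $L' \to V \onto V/L$ factors by (ii) through an admissible monic $X \into V/L$ with $X \in \C$. Pulling $V \onto V/L$ back along $X \into V/L$ yields an admissible sub-object $L'' \into V$ sitting in an extension $L \into L'' \onto X$, so $L'' \in \Prok(\C)$ (Pro is closed under extensions by $\C$), and $L' \to V$ factors through $L''$ by the universal property of the pullback. For left special, given $G \onto L'$ with $L' \in \Prok(\C)$, present $L' = \ilim L'_j$ and pair this with a lattice $L \into G$: the desired sub-extension is assembled from the interaction of $L$ with each $\ker(G \onto L'_j)$, invoking left s-filtering for $\C \into \Indk(\C)$ and its dual. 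Right filtering of $\Indk(\C) \into \elTatek(\C)$ then follows by passing to opposites, since $\elTatek(\C^{\op}) \simeq \elTatek(\C)^{\op}$ swaps Ind and Pro.

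The main obstacle is the quotient equivalence. For essential surjectivity, any elementary Tate object $V$ with chosen lattice $L$ admits the admissible epic $V \onto V/L$ with kernel in $\Prok(\C)$; this becomes an isomorphism in $\elTatek(\C)/\Prok(\C)$, identifying $V$ with the admissible Ind-object $V/L$. For fully faithfulness, one uses the calculus of left fractions from Definition \ref{defi:quotient}: a morphism in $\elTatek(\C)/\Prok(\C)$ between admissible Ind-objects $X,Y$ is represented by a roof $X \xleftarrow{s} W \to Y$ with $W$ elementary Tate and $s$ an admissible epic with $\ker(s) \in \Prok(\C)$. Choosing a lattice of $W$ and applying (ii) to push the restriction $\ker(s) \to W \to Y$ through an object of $\C$, one replaces the roof by a cofinal one with $W \in \Indk(\C)$ and $\ker(s) \in \C$, producing a well-defined morphism in $\Indk(\C)/\C$. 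Checking that this assignment is independent of the choices of lattices and factorisations, respects composition, and is inverse to the functor induced by the inclusion of pairs amounts to a careful comparison of the two calculi of fractions, which is the real technical content of the claim.
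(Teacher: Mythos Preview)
The paper does not supply its own proof of this proposition: it is stated in the preliminaries and attributed wholesale to \cite[Prop.\ 3.10, 5.6, 5.8 \& 5.27]{Braunling:2014fk}. So there is no in-paper argument to compare your proposal against; what you have written is effectively a sketch of how the cited results might be proved.

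That said, your outline is broadly reasonable but has one concrete slip and one genuinely vague step. The slip: in your fully-faithfulness argument you represent morphisms in $\elTatek(\C)/\Prok(\C)$ by roofs $X \xleftarrow{s} W \to Y$. But by Definition~\ref{defi:quotient}(a), the class $\Sigma_e$ of admissible epics with Pro-kernel admits a calculus of \emph{left} fractions, so morphisms $X \to Y$ are represented by co-roofs $X \to W \xleftarrow{s} Y$ with $s \in \Sigma_e$. Your reduction step (pushing the roof into $\Indk(\C)$ via a lattice and property (ii)) would need to be rewritten accordingly; the idea survives, since given $X \to W \xleftarrow{s} Y$ with $X,Y \in \Indk(\C)$ one can pass to $W/L$ for a lattice $L \into W$ and use that $X,Y \to W \onto W/L$ factor through Ind-objects, but the bookkeeping is different from what you wrote.

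The vague step is your ``left special'' argument for $\Prok(\C) \into \elTatek(\C)$: the phrase ``assembled from the interaction of $L$ with each $\ker(G \onto L'_j)$'' hides exactly the work that makes this nontrivial. One actually needs to produce, from an admissible epic $G \onto L'$ in $\elTatek(\C)$ with $L' \in \Prok(\C)$, a Pro-sub-object of $G$ surjecting onto $L'$; this uses more than a lattice of $G$ and the Ind/Pro s-filtering already established, and in \cite{Braunling:2014fk} it is handled via the explicit description of admissible epics in $\elTatek(\C)$ in terms of Ind-Pro presentations. Your sketch does not make clear how the pieces fit together here.
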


\begin{rmk}
    If $\C$ is idempotent complete, then as a consequence of \cite[Theorem 6.7]{Braunling:2014fk}, one can also show that the inclusion $\Indk(\C) \hookrightarrow \elTatek(\C)$ is right special.
\end{rmk}

Following Sato--Sato \cite{MR730247}, we consider the set of all \emph{lattices} in an elementary Tate object. The archetypical example of such is the inclusion
$$k[[t]] \subset k((t)).$$
We observe that $k[[t]]$ is a Pro-object, and the quotient $\bigoplus_{n \geq 1}k\langle t^{-n}\rangle$ is an Ind-object. This is the defining quality of \emph{lattices}.

\begin{definition}\label{defi:sato}
    Let $V$ be an elementary Tate object in $\C$.
    \begin{enumerate}
        \item A \emph{lattice} $L\hookrightarrow V$ of an elementary Tate object is an admissible sub-object, with $L\in\Prok(\C)\subset\elTatek(\C)$ and the cokernel $V/L\in\Indk(\C)\subset\elTatek(\C)$.
        \item The \emph{Sato Grassmannian} $\Gr(V)$ is the partially ordered set of lattices in $V$, where $L_0\leq L_1$ if there exists a commuting triangle of admissible monics
            \begin{equation*}
                \xymatrix{
                L_0 \ar@{^{(}->}[r] \ar@{^{(}->}[rd] & L_1 \ar@{^{(}->}[d] \\
                & V
                }
            \end{equation*}
    \end{enumerate}
\end{definition}

Lattices and the Sato Grassmannian play a key role in our study of Tate objects. Assertion (c) in the theorem below, is viewed by the authors as the main result of \cite{Braunling:2014fk}.
\begin{theorem}\label{thm:sato_filtered}(\cite[Prop. 6.6, Thm. 6.7]{Braunling:2014fk})
    Let $\C$ be an exact category.
    \begin{enumerate}
        \item[(a)] Every elementary Tate object in $\C$ has a lattice.
        \item[(b)] The quotient of a lattice by a sub-lattice is an object of $\C$.
        \item[(c)] If $\C$ is idempotent complete, and $L_0\hookrightarrow V$ and $L_1\hookrightarrow V$ are two lattices in an elementary Tate object $V$, then there exists a lattice $N\hookrightarrow V$ with $L_0,L_1\le N$ in $\Gr(V)$. Similarly, $L_0$ and $L_1$ have a common sub-lattice $M \subset L_0,L_1$.
    \end{enumerate}
\end{theorem}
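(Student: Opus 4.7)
The plan is to address the three parts separately, as they rest on rather different ingredients. For (a), I argue directly from the definition: by \cite[Def.~5.1]{Braunling:2014fk}, an elementary Tate object $V$ admits an admissible extension $L \into V \onto V/L$ with $L \in \Prok(\C)$ and $V/L \in \Indk(\C)$, and this $L$ is a lattice in the sense of Definition \ref{defi:sato}, so the only content is to identify the defining presentation with a lattice.

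For (b), suppose $L_0 \leq L_1$ are lattices, so there is a commuting triangle of admissible monics $L_0 \into L_1 \into V$. Using that $\Prok(\C) \into \elTatek(\C)$ is left s-filtering (Proposition \ref{prop:filtering}), and hence fully exact, one deduces that $L_0 \into L_1$ is admissible in $\Prok(\C)$ with quotient $L_1/L_0 \in \Prok(\C)$. By the Noether isomorphism in exact categories (cf. \cite[Lemma~3.5]{MR2606234}), one obtains an admissible monic $L_1/L_0 \into V/L_0$. Since $V/L_0 \in \Indk(\C)$ and $\Indk(\C)$ is closed under admissible sub-objects in $\elTatek(\C)$ (by the right filtering part of Proposition \ref{prop:filtering}), one concludes $L_1/L_0 \in \Indk(\C) \cap \Prok(\C)$. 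This intersection is equal to $\C$, completing (b).

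For (c), the main obstacle, I would construct the meet $M = L_0 \cap L_1$ as a pullback and the join $N = L_0 + L_1$ as a pushout. The central technical ingredient is a factorization lemma for the composite $\varphi\colon L_1 \into V \onto V/L_0$: this morphism from a Pro-object to an Ind-object factors as $L_1 \onto C \into V/L_0$ with $C \in \C$. This factorization is precisely where idempotent completeness is indispensable: in a non-idempotent-complete exact category the factorization only exists in the Karoubian envelope, so $C$ need not land in $\C$. Granting the factorization, the kernel of $\varphi$ produces $M \in \Prok(\C)$ admissibly embedded in both $L_0$ and $L_1$, and the join $N$ arises as the pushout
$$\xymatrix{M \ar@{^{(}->}[r] \ar@{^{(}->}[d] & L_0 \ar@{^{(}->}[d] \\ L_1 \ar@{^{(}->}[r] & N}$$
inside $V$. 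One then verifies that $N \into V$ is admissible with $V/N \in \Indk(\C)$ using part (b) and a diagram chase against the defining extension of $V$. The main obstacle is thus the Pro-to-Ind factorization lemma: everything else in (c) is a formal exercise in pushouts and pullbacks of admissible monics, but without this lemma the construction of $M$ and $N$ collapses, which is precisely why the hypothesis of idempotent completeness in (c) is sharp.
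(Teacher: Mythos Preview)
The paper does not contain its own proof of this theorem: it is quoted verbatim from \cite[Prop.~6.6, Thm.~6.7]{Braunling:2014fk} and used as a black box throughout. So there is nothing in the present paper to compare your argument against. That said, your outline matches the strategy of the cited reference: part (a) is indeed immediate from the definition of elementary Tate objects, part (b) reduces to the identification $\Prok(\C)\cap\Indk(\C)=\C$ inside $\elTatek(\C)$, and for part (c) the decisive ingredient is exactly the factorization lemma you isolate, namely that every morphism from an admissible Pro-object to an admissible Ind-object factors through an object of $\C$ when $\C$ is idempotent complete. You have correctly identified this as the place where idempotent completeness enters.

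One point in your sketch of (b) is not quite justified as written. You invoke ``$\Indk(\C)$ is closed under admissible sub-objects in $\elTatek(\C)$ by the right filtering part of Proposition~\ref{prop:filtering}''. Right filtering gives factorization of morphisms \emph{into} $\Indk(\C)$ through admissible epics, not closure under admissible sub-objects; and Proposition~\ref{prop:filtering} only asserts right filtering, not right s-filtering, for general $\C$. The cleaner route is to use Noether's lemma to obtain the short exact sequence $L_1/L_0 \into V/L_0 \onto V/L_1$ in $\elTatek(\C)$, observe that the middle and right terms lie in $\Indk(\C)$, and then appeal to the fact (proved in \cite{Braunling:2014fk}) that $\Indk(\C)$ is closed under admissible kernels in $\elTatek(\C)$. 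Combined with the analogous closure of $\Prok(\C)$ under admissible cokernels, this places $L_1/L_0$ in the intersection. This is a minor repair; the shape of your argument is correct.
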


The following convention helps us to avoid awkward notation.

\begin{rmk}\label{rmk:notation}
From now on we consider the infinite cardinal $\kappa$ as fixed, and omit the cardinality bound from the notation, i.e. for an exact category $\C$ we denote by $\Ind(\C)$, $\Pro(\C)$, and $\elTate(\C)$ the corresponding exact categories.
\end{rmk}

The justification for this omission is that \emph{algebraic $K$-theory} is not sensitive to a change of the cardinality $\kappa$ (cf. Corollary \ref{cor:tatek=k'}).

\subsection{Algebraic $K$-Theory}\label{sub:k}
The results of this article are formulated in the language of algebraic $K$-theory.  Primary references include Quillen \cite{MR0338129}, Waldhausen \cite{MR0802796}, and  Schlichting {\cite{MR2079996, MR2206639}. In addition to recalling necessary material and fixing notation, the primary purpose of this section is to prove that a map which arises in Waldhausen \cite{MR0802796} is, up to sign, the boundary map in the $K$-theory localization sequence (cf. Propositions \ref{prop:waldbound} and \ref{prop:Schlichting}). Of course, the choice of sign depends on choosing an orientation of the path space of a based space $X$. We follow convention (and Waldhausen) and define $PX$ to be space of paths \emph{beginning} at the base point. Readers only interested in this result are encouraged to skip to the bottom of Section \ref{subsub:boundary}.

\begin{rmk}
    For convenience, we make use of the language of $\infty$-categories when discussing homotopy coherent diagrams, i.e. a homotopy coherent diagram of spaces will equivalently be described as a ``commuting diagram in the $\infty$-category of spaces''.  We take Lurie \cite{Lurie:bh} as a primary reference.  We will use the language ``strictly commuting'' to indicate when we are working in a 1-category (e.g. of spaces), or by slight abuse of terminology, to denote a diagram of categories which commutes up to canonical equivalence.  We note that in general, a commuting diagram in an $\infty$-category encodes an infinite amount of data.  In practice, one constructs these by constructing a strictly commuting diagram in a 1-category (or a 2-commuting diagram in a 2-category) and then applying a functor to the desired $\infty$-category.  In this work, every commuting diagram in an $\infty$-category will arise in this fashion.
\end{rmk}

\subsubsection{The $K$-Theory Space of an Exact Category}\label{subsub:k}
\begin{definition}[Waldhausen]\label{defi:Waldhausen}
Let $\C$ be an exact category. Denote by $S_{\bullet}(\C)$ the simplicial object in exact categories defined as follows. The exact category of $n$-simplices $S_n(\C)$ is defined to be the exact category with objects given by strings of admissible monics in $\C$
$$(X_1 \hookrightarrow X_2 \hookrightarrow \cdots \hookrightarrow X_n)$$
along with choices of quotients $X_j/X_i$ for all $i<j$. The face maps are given by
$$d_i\colon (X_1 \hookrightarrow \cdots \hookrightarrow X_n) \mapsto (X_1 \hookrightarrow \cdots X_{i-1} \hookrightarrow X_{i+1} \hookrightarrow \cdots \hookrightarrow X_n),$$
for $i \geq 1$, and
$$d_0\colon (X_1 \hookrightarrow \cdots \hookrightarrow X_n) \mapsto (X_2/X_1 \hookrightarrow \cdots \hookrightarrow X_n/X_1).$$
The degeneracy maps $s_i\colon S_n(\C) \to S_{n+1}(\C)$ are defined by repeating the $i$-th entry. We refer to this simplicial object as \emph{Waldhausen's $S$-construction}.
\end{definition}

\begin{definition}\label{defi:modulispace}
    For a category $\C$ we denote by $\C^{\grp}$ the maximal sub-groupoid of $\C$, i.e. the groupoid obtained by discarding all non-invertible morphisms from the category $\C$.
\end{definition}

\begin{definition}\label{defi:Waldhausen_K}
    For an exact category $\C$ we define the $K$-theory space $K_{\C}$ as the loop space
    $$K_{\C} =Ê\Omega|S_{\bullet}(\C)^{\times}|$$
    where $|\cdot|$ denotes geometric realization.
\end{definition}

\begin{rmk}\label{rmk:natural}
    The simplicial object $S_\bullet(\C)^{\times}$ has the property that its space of $0$-simplices is a singleton. Hence, every $1$-simplex induces a loop in the geometric realization. Therefore, we have a map
    $$\C^{\grp} \cong S_{1}\C^{\grp} \to \Omega|S_\bullet(\C)^{\times}| = K_{\C},$$
    which is natural in $\C$.
\end{rmk}

\subsubsection{Additivity}
The fundamental property of algebraic $K$-theory is established in the following ``Additivity Theorem''. All of the results of this paper can be understood as consequences of the Additivity Theorem combined with Theorem \ref{thm:sato_filtered}.
\begin{theorem}[Waldhausen's Additivity Theorem]\label{thm:wald_add1}(\cite[Theorem 1.4.2, Proposition 1.3.2(4)]{MR0802796})
    Let $F_1\hookrightarrow F_2\twoheadrightarrow F_3$ be an exact sequence of functors $\C_1\to\C_2$. Then the map
    \begin{align*}
        |S_\bullet F_2|\colon |S_\bullet(\C_1)^\times|&\to|S_\bullet(\C_2)^\times|\intertext{is naturally homotopic to}
        |S_\bullet F_1\oplus S_\bullet F_3|\colon |S_\bullet(\C_1)^\times|&\to|S_\bullet(\C_2)^\times|.
    \end{align*}
\end{theorem}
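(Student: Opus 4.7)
The plan is to reinterpret an exact sequence of functors $F_1 \hookrightarrow F_2 \twoheadrightarrow F_3$ as a single exact functor $\Phi : \C_1 \to S_2(\C_2)$, where $S_2(\C_2)$ is the exact category of length-two admissible filtrations with chosen quotients appearing in Definition \ref{defi:Waldhausen}. Under this correspondence, $F_1 = d_2 \Phi$, $F_2 = d_1 \Phi$, and $F_3 = d_0 \Phi$, where $d_i : S_2(\C_2) \to S_1(\C_2) = \C_2$ are the face maps of $S_\bullet$. The whole theorem then reduces to establishing, for an arbitrary exact category $\C$, that the functor
\begin{equation*}
    (d_2, d_0) : S_2(\C) \to \C \times \C
\end{equation*}
induces a weak equivalence $|S_\bullet(S_2\C)^\times| \xrightarrow{\simeq} |S_\bullet(\C)^\times| \times |S_\bullet(\C)^\times|$. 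Granted this, the direct-sum section $s : (A,B) \mapsto (A \hookrightarrow A \oplus B \twoheadrightarrow B)$ splits $(d_2, d_0)$ on the nose, and since $d_1 \circ s$ is the direct-sum functor $\oplus : \C \times \C \to \C$ which represents the H-space structure on $K_\C$, the map $|S_\bullet F_2| = |S_\bullet d_1 \Phi|$ becomes, via the canonical homotopy produced by the equivalence, equal up to homotopy to $|S_\bullet d_1 s (d_2, d_0) \Phi| = |S_\bullet(F_1 \oplus F_3)|$, giving the desired natural homotopy.

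The core content is then the equivalence $|S_\bullet(S_2\C)^\times| \simeq |S_\bullet(\C)^\times|^2$. One direction is immediate from the section $s$. The hard direction asks us to show that the endofunctor $s \circ (d_2, d_0)$ on $S_2\C$, which sends an extension $(X_1 \hookrightarrow X_2 \twoheadrightarrow X_3)$ to its split form $(X_1 \hookrightarrow X_1 \oplus X_3 \twoheadrightarrow X_3)$, induces a self-map of $|S_\bullet(S_2\C)^\times|$ homotopic to the identity. These two endofunctors are manifestly not isomorphic as functors on $S_2\C$ itself—a non-split extension is not isomorphic to its splitting—so the comparison must be produced at the level of the $S$-construction.

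The approach I would take is Waldhausen's: construct the natural admissible short exact sequence of functors $s \circ (d_2, d_0) \hookrightarrow \id_{S_2\C} \oplus s \circ (d_2, d_0) \twoheadrightarrow \id_{S_2\C}$, and combine this with a bisimplicial realization argument to produce an explicit simplicial homotopy between the two induced self-maps. The main obstacle is avoiding circularity, since interpreting the middle term as a direct sum is itself an instance of additivity one dimension down; Waldhausen's device for evading this, carried out in \cite[Proposition 1.3.2(4)]{MR0802796}, is to package the induction directly at the level of geometric realizations, where the required formal manipulations of cofibre sequences translate into cell-by-cell homotopies verifiable by direct inspection of the face and degeneracy maps on the bisimplicial groupoid $S_\bullet(S_2\C)^\times$.
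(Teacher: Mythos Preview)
The paper does not give its own proof of this statement: Theorem~\ref{thm:wald_add1} is stated with the citation \cite[Theorem 1.4.2, Proposition 1.3.2(4)]{MR0802796} and no proof is supplied. So there is no in-paper argument to compare your proposal against; the paper simply imports the result from Waldhausen and then uses it (e.g.\ in the proofs of Proposition~\ref{prop:rreladd} and Theorem~\ref{thm:additivity}).

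That said, your reduction in the first two paragraphs is exactly the standard one and matches what the paper's citations encode: the passage from an exact sequence of functors to a single functor $\Phi\colon \C_1\to S_2(\C_2)$, and the observation that the theorem follows once $(d_2,d_0)\colon S_2(\C)\to \C\times\C$ is known to induce an equivalence on $|S_\bullet(-)^\times|$, is precisely the content of Waldhausen's Proposition~1.3.2 (the equivalence of several formulations of additivity). This is also how the paper organizes things: it records the $(d_2,d_0)$ version separately as Theorem~\ref{thm:wald_add2}.

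Where your sketch becomes shaky is the third paragraph. The short exact sequence you write down, $s\circ(d_2,d_0)\hookrightarrow \id\oplus s\circ(d_2,d_0)\twoheadrightarrow \id$, is the tautological split sequence and carries no information; it does not produce a homotopy between $s\circ(d_2,d_0)$ and $\id$, and as you yourself note, trying to extract one from it is circular. Waldhausen's actual proof of the hard input (his Theorem~1.4.2) does not proceed via such a sequence of endofunctors; it is a direct bisimplicial argument (a ``swallowing'' lemma establishing that a certain map of bisimplicial sets is a weak equivalence by a simplex-by-simplex homotopy, later streamlined by McCarthy and others via Quillen's Theorem~A or B). Your outline correctly identifies \emph{what} must be shown and \emph{where} the difficulty lies, but the specific device you propose for the core step would not work as written.
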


Several equivalent reformulations exist. We will need the following one.
\begin{definition}[Waldhausen]
    Let $\D$ be an exact category, and let $\C_1$ and $\C_2$ be full sub-categories of $\D$ which are closed under extensions.
    Define $\E(\C_1,\D,\C_2)$ to be the full sub-category of $\E\D$ consisting of the exact sequences $X_1\hookrightarrow
    Y\twoheadrightarrow X_2$ with $X_i\in\C_i$.
\end{definition}
Note that, because $\C_1$ and $\C_2$ are closed under extensions in $\D$, $\E(\C_1,\D,\C_2)$ is closed under extensions in
$\E\D$; in particular, it is an exact category.

\begin{theorem}(\cite[Theorem 1.4.2, Proposition 1.3.2(1)]{MR0802796})\label{thm:wald_add2}
    The projection
    \begin{align*}
        |S_\bullet(\E(\C_1,\D,\C_2))^\times|&\to |S_\bullet(\C_1)^\times|\times |S_\bullet(\C_2)^\times|\\
                (X_1\hookrightarrow Y\twoheadrightarrow X_2)&\mapsto (X_1,X_2)
    \end{align*}
    is a homotopy equivalence.
\end{theorem}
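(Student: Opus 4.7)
The plan is to exhibit the natural direct-sum functor as a homotopy inverse to the projection, using only the exact-sequence-of-functors form of Waldhausen's Additivity Theorem (Theorem \ref{thm:wald_add1}) applied to carefully chosen endofunctors of $\E(\C_1,\D,\C_2)$ itself.

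First I would construct the exact functor
\[
s\colon \C_1\times\C_2 \to \E(\C_1,\D,\C_2), \qquad (X_1,X_2)\mapsto (X_1\hookrightarrow X_1\oplus X_2\twoheadrightarrow X_2),
\]
equipped with the canonical split inclusion and projection. The composite $p\circ s$ is manifestly the identity functor on $\C_1\times\C_2$, so on $K$-theory we obtain $|S_\bullet p|\circ |S_\bullet s| = \id$ strictly. It therefore suffices to exhibit a natural homotopy from $|S_\bullet(s\circ p)|$ to the identity on $|S_\bullet\E(\C_1,\D,\C_2)^\times|$.

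For this I would introduce two endofunctors of $\E(\C_1,\D,\C_2)$, defined by
\[
G_1(X_1\hookrightarrow Y\twoheadrightarrow X_2) := (X_1 = X_1\twoheadrightarrow 0), \qquad G_3(X_1\hookrightarrow Y\twoheadrightarrow X_2) := (0\hookrightarrow X_2 = X_2).
\]
These are well-defined and exact because $\C_1$ and $\C_2$ are full additive subcategories of $\D$ closed under extensions, and the defining short exact sequence of each object of $\E(\C_1,\D,\C_2)$ assembles into a short exact sequence of endofunctors $G_1\hookrightarrow \id_{\E(\C_1,\D,\C_2)}\twoheadrightarrow G_3$; the three columns of the resulting $3\times 3$ diagram are short exact by inspection. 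A direct computation gives $G_1\oplus G_3 = s\circ p$, since
\[
(X_1=X_1\twoheadrightarrow 0)\oplus(0\hookrightarrow X_2=X_2) = (X_1\hookrightarrow X_1\oplus X_2\twoheadrightarrow X_2).
\]
Applying Theorem \ref{thm:wald_add1} to the short exact sequence $G_1\hookrightarrow \id\twoheadrightarrow G_3$ then yields a natural homotopy $|S_\bullet \id|\simeq |S_\bullet G_1\oplus S_\bullet G_3| = |S_\bullet(s\circ p)|$, completing the argument.

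The main point requiring care is the identification of the ``sum'' $|S_\bullet G_1\oplus S_\bullet G_3|$ appearing in the statement of additivity with the map $|S_\bullet(s\circ p)|$ induced by the pointwise direct-sum functor. This is the only non-formal input: it amounts to the standard fact that the $H$-space multiplication on $|S_\bullet\E(\C_1,\D,\C_2)^\times|$ used in Waldhausen's formulation of the additivity theorem is represented, at the level of exact functors, by the direct-sum bifunctor. Once this identification is pinned down, the rest of the proof is simply unwinding definitions.
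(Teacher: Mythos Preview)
Your proposal is correct and follows the standard route. The paper does not actually supply a proof of this theorem---it is stated as a citation of Waldhausen---so there is nothing in the paper to compare against directly. That said, your argument is precisely the one Waldhausen gives in the cited reference, and the paper itself uses the identical strategy (section via direct sum, then an exact sequence of endofunctors $\alpha\hookrightarrow 1\twoheadrightarrow\beta$ with $\alpha\oplus\beta\cong\sigma\circ q$) in its proof of Proposition~\ref{prop:rreladd}. Your final paragraph correctly identifies the only point needing care, and your handling of it is adequate: the $H$-space structure on $|S_\bullet(-)^\times|$ is by construction induced by the direct-sum bifunctor, so $|S_\bullet G_1\oplus S_\bullet G_3|$ and $|S_\bullet(G_1\oplus G_3)|$ agree on the nose.
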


\subsubsection{The $K$-Theory fiber Sequence}
A fundamental consequence of the Additivity Theorem is that an exact functor $\C\to^f\D$ determines a natural fiber sequence of $K$-theory spaces. We recall relevant details from \cite{MR0802796} here.

\begin{definition}
    Let $\C$ be an exact category. Define the \emph{right path-space} of $S_\bullet(\C)$ to be the simplicial diagram of exact
    categories $P^r S_\bullet(\C)$ with $n$-simplices
    \begin{equation*}
        P^r S_n(\C):=S_{n+1}(\C),
    \end{equation*}
    with the face map $d_i$ given by the face map $d_{i+1}$ of $S_\bullet(\C)$, and with the degeneracy map $s_i$ given by the
    degeneracy $s_{i+1}$ of $S_\bullet(\C)$.
\end{definition}

The face maps $d_0\colon S_{n+1}(\C)\to S_n(\C)$ determine a map of simplicial diagrams of exact categories
\begin{equation*}
    P^r S_\bullet(\C)\to S_\bullet(\C).
\end{equation*}

\begin{definition}(\cite[Definition 1.5.4]{MR0802796})\label{def:relwaldRIGHT}
    Let $\C\to^f\D$ be an exact functor. Define the simplicial diagram of exact categories $S^r_\bullet(f)$ to
    be the strict pullback
    \begin{equation*}
        \xymatrix{
          S^r_\bullet(f) \ar[d]_\delta \ar[r] & P^r S_\bullet(\D) \ar[d] \\
          S_\bullet(\C) \ar[r]^{f} & S_\bullet(\D)   }.
    \end{equation*}
    Explicitly, the $n$-simplices $S^r_n(f)$ consist of the the full sub-category of $S_n(\C)\times S_{n+1}(\D)$ on the
    objects
    \begin{equation*}
        \left(Y_1\hookrightarrow\cdots\hookrightarrow Y_n;X_1\hookrightarrow\cdots\hookrightarrow X_{n+1}\right)
    \end{equation*}
    such that
    \begin{equation*}
        \left(f(Y_1)\hookrightarrow \cdots\hookrightarrow f(Y_n)\right)=\left(X_2/X_1\hookrightarrow \cdots\hookrightarrow
        X_{n+1}/X_1\right)
    \end{equation*}
    for all $i\ge 1$. The face and degeneracy maps are the products of the face and degeneracy maps for $S_\bullet(\C)$ and $P^r S_\bullet(\Dc)$. The map $S^r_\bullet(f)\to^\delta S_\bullet(\C)$ is the projection onto the $S_\bullet(\C)$-factor.
\end{definition}

The Additivity Theorem implies the following.
\begin{proposition}(cf. the proof of \cite[Proposition 1.5.5]{MR0802796})\label{prop:rreladd}
    Let $\C\to^f\D$ be an exact map of exact categories. The map
    \begin{align*}
        S^r_n(f)&\to^{q^r} \D\times S_n(\C),\\
        \left(Y_1\hookrightarrow\cdots\hookrightarrow Y_n;X_1\hookrightarrow\cdots\hookrightarrow
        X_{n+1}\right)&\to/|-{>}/^{q^r}(X_1,Y_1\hookrightarrow\cdots\hookrightarrow Y_n)\intertext{induces an equivalence}
        |S_\bullet(S^r_n(f))^\times|&\to^\simeq |S_\bullet(\D)^\times|\times |S_\bullet (S_n(\C))^\times|.
    \end{align*}
\end{proposition}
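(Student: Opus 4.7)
The plan is to realize $S^r_n(f)$ as a split extension of exact categories in the sense of Theorem \ref{thm:additivity}. Define three functors: the projection $p\colon S^r_n(f) \to \D$ sending $(Y_\bullet; X_\bullet) \mapsto X_1$; its candidate left adjoint $s\colon \D \to S^r_n(f)$ sending $X$ to the object $(0; X = X = \cdots = X)$ with trivial $\C$-flag and constant $\D$-flag; and the inclusion $i\colon S_n(\C) \to S^r_n(f)$ sending $Y_\bullet$ to $(Y_\bullet; 0 \hookrightarrow f(Y_1) \hookrightarrow \cdots \hookrightarrow f(Y_n))$. The goal is to apply Theorem \ref{thm:additivity} to the resulting sequence $S_n(\C) \xrightarrow{i} S^r_n(f) \xrightarrow{p} \D$.

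First I would unpack the exact structure on $S^r_n(f)$: since it is defined as a strict pullback of exact categories (Definition \ref{def:relwald}), and using exactness of $f$, one checks that $S^r_n(f)$ is closed under extensions inside $S_n(\C) \times S_{n+1}(\D)$ and so inherits the product exact structure, in which short exact sequences are level-wise. With this structure in hand, the hypotheses of Theorem \ref{thm:additivity} are routine to verify: $i$ is fully faithful with essential image equal to the full sub-category of objects with $X_1 = 0$, namely the kernel of $p$; the pair $(s,p)$ is an adjunction with $ps \cong \id_\D$; and for $(Y_\bullet; X_\bullet) \in S^r_n(f)$ the component of the co-unit $\epsilon\colon sp \to \id$ is the level-wise map $0 \to Y_\bullet$ on the $\C$-flag and $X_1 \hookrightarrow X_{i+1}$ on the $\D$-flag, an admissible monic with cokernel $(Y_\bullet; 0 \hookrightarrow f(Y_1) \hookrightarrow \cdots \hookrightarrow f(Y_n)) = i(Y_\bullet)$, confirming the last hypothesis. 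Theorem \ref{thm:additivity} then produces an equivalence $K_{S_n(\C)} \times K_\D \xrightarrow{\simeq} K_{S^r_n(f)}$, whose inverse is induced by the projection pair $(r,p)$ (with $r$ sending $(Y_\bullet; X_\bullet)$ to $Y_\bullet$); this is precisely the map $q^r$ of the proposition, up to swapping the two factors.

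To promote this from an equivalence of $K$-theory spaces to the claimed equivalence of deloopings $|S_\bullet(-)^\times|$, I would note that Waldhausen's proof of additivity (Theorem \ref{thm:wald_add1}) is already formulated at the level of the simplicial spaces $S_\bullet(-)^\times$, so the splitting argument above, applied level-wise in the outer $S_\bullet$, produces the desired equivalence of geometric realizations. Equivalently, each space $|S_\bullet(-)^\times|$ is connected with its 0-simplices a singleton, so an equivalence on loop spaces between these connected infinite loop spaces lifts to an equivalence of the spaces themselves. The main technical obstacle I anticipate is precisely the first step: carefully pinning down the inherited exact structure on $S^r_n(f)$ and checking that the co-unit is an admissible monic there (rather than merely a monomorphism in some ambient additive category), since without this the invocation of Theorem \ref{thm:additivity} is not licensed.
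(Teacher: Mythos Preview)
Your proposal is correct and is essentially the same argument as the paper's, just packaged through Theorem~\ref{thm:additivity} rather than by applying Theorem~\ref{thm:wald_add1} directly: the paper writes down the section $\sigma$ (your $i \oplus s$) and the endofunctors $\alpha = sp$, $\beta = ir$, observes the short exact sequence $\alpha \hookrightarrow 1 \twoheadrightarrow \beta$, and invokes Additivity at the $|S_\bullet(-)^\times|$ level to conclude $|S_\bullet\sigma|\circ|S_\bullet q^r|\simeq 1$. The only cosmetic difference is that by routing through Theorem~\ref{thm:additivity} you land at the $K$-theory (loop space) level and then have to lift back to $|S_\bullet(-)^\times|$ via connectedness, whereas the paper's direct application of Theorem~\ref{thm:wald_add1} stays at the delooped level throughout and avoids that extra step.
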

\begin{proof}
    A right inverse to the above map $q^r$ is given by the map
    \begin{align*}
        S_n(\C)\times \D&\to^\sigma S^r_n(f)\intertext{sending}
        (X,Y_1\hookrightarrow\cdots\hookrightarrow Y_n)&\to/|-{>}/^\sigma \left(Y_1\hookrightarrow\cdots\hookrightarrow
        Y_n;X\hookrightarrow X\oplus f(Y_1)\hookrightarrow\cdots\hookrightarrow X\oplus f(Y_n)\right).
    \end{align*}
    It remains to exhibit a homotopy $|S_\bullet\sigma|\circ|S_\bullet q^r|\simeq 1_{|S_\bullet(S^r_n(f))|}$. For this, consider the functors
    \begin{align*}
        S^r_n(f)&\to^{\alpha} S^r_n(f)\\
        \left(Y_1\hookrightarrow\cdots\hookrightarrow Y_n;X_1\hookrightarrow\cdots\hookrightarrow
        X_{n+1}\right)&\to/|-{>}/^{\alpha} \left(0\hookrightarrow\cdots\hookrightarrow 0;X_1\hookrightarrow\cdots\hookrightarrow X_1\right)\intertext{and}
        S^r_n(f)&\to^{\beta} S^r_n(f)\\
        \left(Y_1\hookrightarrow\cdots\hookrightarrow Y_n;X_1\hookrightarrow\cdots\hookrightarrow
        X_{n+1}\right)&\to/|-{>}/^{\beta} \left(Y_1\hookrightarrow\cdots\hookrightarrow Y_n;0\hookrightarrow f(Y_1)\hookrightarrow\cdots\hookrightarrow f(Y_n)\right).
    \end{align*}
    Then we have a natural isomorphism $\sigma\circ q^r\cong \alpha\oplus\beta$ as well as a short exact sequence of functors
    \begin{equation*}
        \alpha\hookrightarrow 1_{S^r_n(f)}\twoheadrightarrow\beta.
    \end{equation*}
    By the Additivity Theorem (Theorem \ref{thm:wald_add1}), there exists a homotopy
    \begin{equation*}
        |S_\bullet\sigma|\circ |S_\bullet q^r|\simeq |S_\bullet 1_{S^r_n(f)}|.
    \end{equation*}
    We see that $|S_\bullet q^r|$ is a homotopy equivalence as claimed.
\end{proof}

Let $\D_\bullet^{\triv}$ denote the constant simplicial diagram on $\D$. The identity map $\D\to^1\D$ extends to an exact map of
simplicial diagrams of exact categories
\begin{equation*}
    \D^{\triv}_\bullet\to S^r_\bullet(f).
\end{equation*}
Applying the $S$-construction to this map, we obtain an exact map of bisimplicial diagrams of exact categories
\begin{equation*}
    S_\bullet(\D)^{v-\triv}_\bullet\to^q S_\bullet S^r_\bullet(f),
\end{equation*}
where the superscript ``$v-\triv$'' indicates that the bisimplicial object is constant in the vertical direction.\footnote{We adopt the convention that in a bisimplicial set $X_{\bullet,\bullet}$, viewed as a first quadrant diagram, the first bullet denotes the horizontal coordinate, while the second denotes the vertical one.} Applying the
$S$-construction to the map
\begin{equation*}
    S^r_\bullet(f)\to^\delta S_\bullet(\C),
\end{equation*}
we obtain an exact map of bisimplicial diagrams of exact categories
\begin{equation*}
    S_\bullet S^r_\bullet(f)\to^{S_\bullet\delta} S_\bullet S_\bullet(\C).
\end{equation*}
The above maps determine a strictly commuting square
\begin{equation}\label{wald155square}
    \xymatrix{
        (S_\bullet(\D)^\times)^{v-\triv}_\bullet \ar[r] \ar[d] & S_\bullet(S^r_\bullet(f))^\times \ar[d]^{S_\bullet\delta}\\
        \ast \ar[r] & S_\bullet (S_\bullet(\C))^\times.
    }
\end{equation}

Proposition \ref{prop:rreladd} provides the core of the proof of the following.
\begin{proposition}(\cite[Proposition 1.5.5]{MR0802796})\label{prop:wald155}
    The geometric realization of the square \eqref{wald155square} is a homotopy pullback square, equivalently a cartesian square in the $\infty$-category of spaces.
\end{proposition}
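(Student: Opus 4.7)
The plan is to present the given square as the vertex-inclusion square for a map of simplicial spaces (in the outer coordinate of the bisimplicial objects in the right column), whose levelwise fibers are all identified, via Proposition~\ref{prop:rreladd}, with the constant space $|S_\bullet(\D)^\times|$, and then to invoke a standard realization lemma.

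First I would verify the identification of the left column with the $0$-column of the right column: the defining condition on $S^r_n(f)$ is vacuous when $n=0$, so $S^r_0(f) = S_0(\Cc)\times S_1(\D)\cong \D$, and hence $|S_\bullet(\D)^\times|$ is literally the $0$-column of the bisimplicial object $S_\bullet(S^r_\bullet(f))^\times$. Dually, $\ast = |S_\bullet S_0(\Cc)^\times|$ is the $0$-column of $S_\bullet S_\bullet(\Cc)^\times$. Thus the square under consideration is the natural comparison of the $0$th outer column with the total realization for the bisimplicial map $S_\bullet\delta$.

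Second, for each $n$, Proposition~\ref{prop:rreladd} provides an equivalence $|S_\bullet S^r_n(f)^\times|\simeq |S_\bullet(\D)^\times|\times |S_\bullet S_n(\Cc)^\times|$ under which $|S_\bullet\delta_n|$ becomes the projection onto the second factor. The sections $\sigma_n$ appearing in the proof of that proposition are constructed from direct sum, hence are natural in the outer simplicial parameter, so these equivalences are compatible (up to coherent homotopy) with the outer simplicial structure. It follows that for every $[n]\to[m]$ in $\Delta$, the induced square between the $m$-th and $n$-th outer levels of $S_\bullet\delta$ is homotopy cartesian, being equivalent to a product square. Third, I would invoke the Bousfield--Friedlander realization lemma: when every face-induced square between levels of a map of simplicial spaces is homotopy cartesian and the base satisfies the $\pi_*$-Kan condition, the $0$-vertex-to-total-realization square is homotopy cartesian. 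The $\pi_*$-Kan condition holds for $[n]\mapsto |S_\bullet S_n(\Cc)^\times|$ because each level is connected (as $S_0 S_n(\Cc)=\ast$) and is the classifying space of an infinite loop space, so the homotopy groups assemble into simplicial abelian groups. Applied to our situation, this yields the desired cartesian square on realizations.

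The main obstacle is the technical step of making the levelwise equivalences of Proposition~\ref{prop:rreladd} coherent enough in the outer simplicial parameter to feed into a realization lemma, since \emph{a priori} these are only homotopy equivalences at each fixed $n$. The inverse $\sigma$ is strictly natural in $n$, so one obtains an honest bisimplicial section; combining this with the Additivity-based homotopy from the proof of Proposition~\ref{prop:rreladd} should suffice, but tracing the coherence carefully is the heart of the argument. A slightly different route, which avoids invoking Bousfield--Friedlander at all, is to use that the strict section $\sigma$ reduces $S_\bullet S^r_\bullet(f)^\times$ to a bisimplicial direct product over $S_\bullet S_\bullet(\Cc)^\times$ in the sense of the Additivity Theorem, after which the conclusion is a formal computation of the fiber of a product projection.
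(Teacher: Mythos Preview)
Your proposal is correct and follows essentially the same route as Waldhausen's original argument (which the paper cites rather than reproves): the paper only remarks that Proposition~\ref{prop:rreladd} ``provides the core of the proof,'' and your outline---identifying the left column with the $0$th level, using Proposition~\ref{prop:rreladd} to trivialize the levelwise fibers, and then invoking a realization lemma---is exactly how that core is turned into the full statement. Your identification of the coherence of the levelwise splittings as the main technical point is also accurate, and your observation that the section $\sigma$ can be arranged simplicially (up to the usual bookkeeping with chosen quotients in the $S$-construction) is the way Waldhausen handles it.
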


\begin{corollary}(\cite[Corollary 1.5.6]{MR0802796})
    Let $\C_1\to^f\C_2\to^g\C_3$ be a sequence of exact functors. Then the strictly commuting square
    \begin{equation*}
        \xymatrix{
            |S_\bullet(\C_2)^\times| \ar[r] \ar[d] & |S_\bullet(\C_3)^\times| \ar[d] \\
            |S_\bullet(S^r_\bullet(f))^\times| \ar[r] & |S_\bullet(S^r_\bullet(gf))^\times|
        }
    \end{equation*}
    is a homotopy pullback, i.e. a cartesian square in the $\infty$-category of spaces.
\end{corollary}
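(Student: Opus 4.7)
The plan is to deduce the corollary from two applications of Proposition \ref{prop:wald155}, one to $f$ and one to $gf$, combined with the pasting lemma for homotopy pullback squares.

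First I would observe that the functor $g\colon\C_2\to\C_3$ induces a natural map of simplicial diagrams of exact categories $g_\ast\colon S^r_\bullet(f)\to S^r_\bullet(gf)$, sending
\begin{equation*}
(Y_1\hookrightarrow\cdots\hookrightarrow Y_n;\ X_1\hookrightarrow\cdots\hookrightarrow X_{n+1}) \longmapsto (Y_1\hookrightarrow\cdots\hookrightarrow Y_n;\ g(X_1)\hookrightarrow\cdots\hookrightarrow g(X_{n+1})).
\end{equation*}
This map commutes with the projections $\delta$ to $S_\bullet(\C_1)$, since both $\delta$'s simply record the $Y$-chain, and it is compatible, via $g$ itself, with the inclusions of the trivial parts $(\C_2)^{\triv}_\bullet\to S^r_\bullet(f)$ and $(\C_3)^{\triv}_\bullet\to S^r_\bullet(gf)$ defined above Proposition \ref{prop:wald155}.

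Applying $|S_\bullet((-)^\times)|$ and stacking the two resulting squares over the common base $|S_\bullet S_\bullet(\C_1)^\times|$, I obtain the diagram of spaces
\begin{equation*}
\xymatrix{
|S_\bullet(\C_2)^\times| \ar[r] \ar[d] & |S_\bullet(S^r_\bullet(f))^\times| \ar[d]^{g_\ast} \\
|S_\bullet(\C_3)^\times| \ar[r] \ar[d] & |S_\bullet(S^r_\bullet(gf))^\times| \ar[d] \\
\ast \ar[r] & |S_\bullet S_\bullet(\C_1)^\times|.
}
\end{equation*}
The bottom square is cartesian by Proposition \ref{prop:wald155} applied to $gf$. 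The right-hand column of the outer rectangle is the composite $|S_\bullet(S^r_\bullet(f))^\times|\to|S_\bullet(S^r_\bullet(gf))^\times|\to|S_\bullet S_\bullet(\C_1)^\times|$, which, by the commutativity observation above, agrees with the $\delta$-projection associated to $f$. Hence the outer rectangle is precisely the cartesian square provided by Proposition \ref{prop:wald155} applied to $f$.

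By the pasting lemma for pullback squares in the $\infty$-category of spaces, the top square is then cartesian, which is the statement of the corollary. The only real obstacle is the bookkeeping check that the various simplicial maps strictly commute, so that the outer rectangle genuinely recovers the Proposition \ref{prop:wald155} square for $f$ rather than some twisted variant; once this is verified, the corollary is a formal consequence of the proposition together with pasting.
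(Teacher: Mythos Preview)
Your argument is correct and is essentially the standard proof (indeed, it is Waldhausen's own argument in \cite[Corollary 1.5.6]{MR0802796}): apply Proposition \ref{prop:wald155} to $f$ and to $gf$ over the common base $|S_\bullet S_\bullet(\C_1)^\times|$ and conclude by the pasting lemma. The paper does not supply a proof of this corollary, merely citing Waldhausen, so there is nothing further to compare.
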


If we consider the sequence $\C\to^1\C\to^f\D$, we obtain the following.
\begin{corollary}(\cite[Corollary 1.5.7]{MR0802796})\label{cor:wald157}
    Let $\C\to^f\D$ be an exact functor. Then the strictly commuting square
    \begin{equation}\label{r157square}
        \xymatrix{
            |S_\bullet(\C)^\times| \ar[d] \ar[r]^{f} & |S_\bullet(\D)^\times| \ar[d] \\
            |S_\bullet S^r_\bullet(1_{\C})^\times| \ar[r] & |S_\bullet S^r_\bullet(f)^\times|   }.
    \end{equation}
    is a homotopy pullback, i.e. a cartesian square in the $\infty$-category of spaces.
\end{corollary}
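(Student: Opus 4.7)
The proof I would give is essentially a one-line specialization of the preceding corollary, as hinted at by the sentence ``If we consider the sequence $\C\to^1\C\to^f\D$\ldots'' that introduces the statement. Concretely, apply \cite[Corollary 1.5.6]{MR0802796} (stated immediately above) to the composable pair
\[
\C \xrightarrow{\,1_\C\,} \C \xrightarrow{\,f\,} \D,
\]
i.e.\ with the roles of ``$f$'' and ``$g$'' from that corollary taken to be $1_\C$ and $f$ respectively. The resulting homotopy cartesian square has upper row $|S_\bullet(\C)^\times| \xrightarrow{f} |S_\bullet(\D)^\times|$ and lower row $|S_\bullet(S^r_\bullet(1_\C))^\times| \to |S_\bullet(S^r_\bullet(f \circ 1_\C))^\times|$. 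Since $f \circ 1_\C = f$, this is exactly the square \eqref{r157square}.

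Two pieces of bookkeeping remain. First, one should match the vertical maps: those in the previous corollary come from the canonical map $(\C')^{\triv}_\bullet \to S^r_\bullet(h)$ (for $h = 1_\C$ and $h = f$) used to form \eqref{wald155square} in the proof of Proposition \ref{prop:wald155}, and by construction these are the natural maps appearing in \eqref{r157square}. Second, inside the derivation of \cite[Corollary 1.5.6]{MR0802796} from Proposition \ref{prop:wald155}, one must observe that the map of relative constructions $S^r_\bullet(1_\C) \to S^r_\bullet(f)$ induced by our sequence acts on the $\D$-component by post-composition with $f$, and hence, under the identification of the fiber of $|S_\bullet(S^r_\bullet(h))^\times|\to |S_\bullet S_\bullet(\C)^\times|$ with $|S_\bullet(\text{target of }h)^\times|$ provided by Proposition \ref{prop:wald155}, induces the map $f_*\colon |S_\bullet(\C)^\times| \to |S_\bullet(\D)^\times|$.

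The genuine work has already been carried out in Proposition \ref{prop:rreladd} and its consequence Proposition \ref{prop:wald155}: once these are in hand, the present corollary (and the one preceding it) follow by the pasting lemma for homotopy cartesian squares, applied to a morphism of fiber sequences with common base $|S_\bullet S_\bullet(\C)^\times|$. I therefore anticipate no serious obstacle specific to Corollary \ref{cor:wald157}; the entire step is a formal manipulation of cartesian squares, with the only real content being the additivity-based input already extracted in Proposition \ref{prop:rreladd}.
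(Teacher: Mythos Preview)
Your proposal is correct and matches the paper's approach exactly: the paper gives no proof beyond the introductory sentence ``If we consider the sequence $\C\to^1\C\to^f\D$, we obtain the following,'' which is precisely the specialization of the preceding corollary you describe. Your additional bookkeeping about matching the vertical maps is sound but more explicit than the paper bothers to be.
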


\begin{lemma}\label{lemma:pathcontract}
    Let $\C$ be an exact category. Then $|S^r_\bullet(1_\C)^\times|=|P^r S_\bullet(\C)^\times|\simeq\ast$.
\end{lemma}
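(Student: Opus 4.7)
The plan is to reduce the lemma to the classical fact that the path space of a pointed simplicial object is contractible via an extra degeneracy.

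First I would unpack the left-hand identification. Setting $f = 1_{\C}$ in Definition \ref{def:relwald}, the pullback defining $S^r_n(1_{\C})$ consists of pairs
\[
(Y_1 \hookrightarrow \cdots \hookrightarrow Y_n;\; X_1 \hookrightarrow \cdots \hookrightarrow X_{n+1})
\]
in $S_n(\C)\times S_{n+1}(\C)$ subject to the compatibility $Y_i = X_{i+1}/X_1$ for all $i \geq 1$. Since an object of $S_{n+1}(\C)$ already includes the data of all subquotients $X_{i+1}/X_1$, projection onto the second factor defines an isomorphism of exact categories
\[
S^r_n(1_{\C}) \;\xrightarrow{\;\simeq\;}\; S_{n+1}(\C) \;=\; P^r S_n(\C),
\]
and one checks directly that this is compatible with faces and degeneracies. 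Passing to maximal subgroupoids and geometric realizations yields the asserted equality $|S^r_\bullet(1_{\C})^\times| = |P^r S_\bullet(\C)^\times|$.

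Second, I would prove contractibility of $|P^r S_\bullet(\C)^\times|$ by exhibiting an extra degeneracy. Recall that the face and degeneracy operators on $P^r S_\bullet(\C)$ are the re-indexed operators $d^{P^r}_i = d^{S}_{i+1}$ and $s^{P^r}_i = s^{S}_{i+1}$, so the \emph{unused} operator $s_0^{S}\colon S_{n+1}(\C) \to S_{n+2}(\C)$, which pre-pends the zero object to a flag, becomes a candidate extra degeneracy
\[
s_{-1}\colon P^r S_n(\C)^\times \to P^r S_{n+1}(\C)^\times,\qquad (X_1\hookrightarrow\cdots\hookrightarrow X_{n+1})\;\mapsto\;(0\hookrightarrow X_1\hookrightarrow\cdots\hookrightarrow X_{n+1}).
\]
The simplicial identities in $S_\bullet(\C)^\times$ immediately imply the extra-degeneracy identities with respect to the shifted face and degeneracy operators, in particular $d_0^{P^r}\circ s_{-1} = d_1^{S}\circ s_0^{S} = \mathrm{id}$ and $d_i^{P^r}\circ s_{-1} = s_{-1}\circ d_{i-1}^{P^r}$ for $i\geq 1$. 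The presence of an extra degeneracy on a pointed simplicial set furnishes an explicit simplicial homotopy from the identity to the constant map at the zero flag, which realizes to a contraction of $|P^r S_\bullet(\C)^\times|$.

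The only substantive step is the second; the main (mild) obstacle is the careful bookkeeping of the index shifts to confirm that $s_0^{S}$ really does satisfy the extra-degeneracy identities after passing to $P^r$. Once that is in hand, the result is a standard consequence of simplicial homotopy theory.
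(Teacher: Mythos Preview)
Your proposal is correct and follows essentially the same approach as the paper: both arguments use that the forgotten operator $s_0^S$ furnishes an extra degeneracy on the d\'ecalage $P^r S_\bullet(\C)^\times$, contracting it to the point $S_0(\C)^\times$. The paper makes this explicit by first taking nerves to obtain a bisimplicial set and then applying the extra-degeneracy contraction level-wise (invoking the realization lemma), whereas you phrase it directly at the level of simplicial groupoids; just be careful that your final sentence should say ``pointed simplicial object'' rather than ``pointed simplicial set,'' since $P^r S_\bullet(\C)^\times$ is a simplicial groupoid.
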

\begin{proof}
    Denote by $S_\bullet(\C)^\times_\bullet$ the bisimplicial set obtained by taking the nerves of the groupoids $S_n(\C)^\times$ in the vertical direction, and denote by $P^r S_{\bullet}(\C)^\times_\bullet$ the analogue for $P^r S_\bullet(\C)^\times$. For each $m$, the horizontal simplicial set $P^r S_\bullet(\C)^\times_m$ is obtained from $S_\bullet(\C)^\times_m$ by forgetting the zeroth face and degeneracy maps and shifting all simplicial indices down by one (i.e. $P^r S_n(\C)^\times_m=S_{n+1}(\C)^\times_m$ and the $i^{th}$ face and degeneracy maps are given by the maps $d_{i+1}$ and $s_{i+1}$ on $S_{n+1}(\C)^\times$). Recall (e.g. from \cite[p. 219]{MR1897816}) that the maps
    \begin{equation*}
        d_0^{n+1}\colon P^r S_n(\C)^\times_m\leftrightarrows S_0(\C)^\times_m=\ast\colon s_0^{n+1}
    \end{equation*}
    are the value on $n$-simplices of a homotopy equivalence
    \begin{equation*}
        P^r S_\bullet(\C)^\times_m\simeq \ast.
    \end{equation*}
    Because geometric realization preserves level-wise weak equivalences, we conclude that $|P^r S_\bullet(\C)^\times|\simeq\ast$ is a weak equivalence.
\end{proof}

\begin{proposition}[Waldhausen]\label{prop:wald157}
    Let $\C\to^f\D$ be an exact functor. The homotopy equivalence of Lemma \ref{lemma:pathcontract} determines a homotopy which makes the square
    \begin{equation}\label{rWaldsquare}
        \xymatrix{
            |S_\bullet(\C)^\times| \ar[d] \ar[r]^{f} & |S_\bullet(\D)^\times| \ar[d] \\
            \ast \ar[r] & |S_\bullet S^r_\bullet(f)^\times|
        }
    \end{equation}
    homotopy commute and a homotopy pullback.
\end{proposition}

If we take $\D$ to be the zero category, we see that the square \eqref{rWaldsquare} induces the inclusion of 1-simplices map
\begin{equation*}
    |S_\bullet(\C)^\times|\to\Omega|S_\bullet S_\bullet(\C)^\times|
\end{equation*}
of Remark \ref{rmk:natural}. The proposition shows that this is in fact an equivalence. By iterating the $S$-construction, one sees that the $K$-theory space $K_{\C}$ of an exact category $\C$ is canonically an \emph{infinite loop space}. We denote the corresponding connective spectrum by $\Kk_{\C}$.

\begin{rmk}
    For an exact category $\C$, the connective spectrum $\Kk_{\C}$ admits a natural \emph{non-connective} variant $\Kb_{\C}$, capturing negative $K$-theory groups. The constructions of the present paper extend to non-connective $K$-theory and similar invariants of exact categories. We leave this to future work.
\end{rmk}

We conclude this paragraph by introducing a dual version of the relative $S$-construction, which plays a role in the applications below.
\begin{definition}\label{def:Spath}
    Let $\C$ be an exact category. Define the \emph{left path-space} of $S_\bullet(\C)$ to be the simplicial diagram of exact
    categories $P^\ell S_\bullet(\C)$ with $n$-simplices
    \begin{equation*}
        P^\ell S_n(\C):=S_{n+1}(\C),
    \end{equation*}
    with the face map $d_i$ given by the face map $d_i$ of $S_\bullet(\C)$, and with the degeneracy map $s_i$ given by the
    degeneracy $s_i$ of $S_\bullet(\C)$.
\end{definition}

The face maps $d_{n+1}\colon S_{n+1}(\C)\to S_n(\C)$ determine a map of simplicial exact categories
\begin{equation*}
    P^\ell S_\bullet(\C)\to S_\bullet(\C).
\end{equation*}

\begin{definition}\label{def:relwald}
    Let $\C\to^f\D$ be an exact map of exact categories. Define the simplicial diagram of exact categories $S^\ell_\bullet(f)$
    to be the strict pullback
    \begin{equation*}
        \xymatrix{
          S^\ell_\bullet(f) \ar[d]_\delta \ar[r] & P^\ell S_\bullet(\D) \ar[d] \\
          S_\bullet(\C) \ar[r]^{f} & S_\bullet(\D)   }.
    \end{equation*}
    Explicitly, the $n$-simplices $S^\ell_n(f)$ consist of the the full sub-category of $S_n(\C)\times S_{n+1}(\D)$ on the
    objects
    \begin{equation*}
        \left(Y_1\hookrightarrow\cdots\hookrightarrow Y_n;X_1\hookrightarrow\cdots\hookrightarrow X_{n+1}\right)
    \end{equation*}
    such that
    \begin{equation*}
        \left(f(Y_1)\hookrightarrow \cdots\hookrightarrow f(Y_n)\right)=\left(X_1\hookrightarrow \cdots\hookrightarrow
        X_n\right)
    \end{equation*}
    for all $i\ge 1$. The face and degeneracy maps are the products of the face and degeneracy maps for $S_\bullet(\C)$ and
    $P^\ell S_\bullet(\Dc)$.  The map $S^\ell_\bullet(f)\to^\delta S_\bullet(\C)$ is the projection onto the $S_\bullet(\C)$
    factor.
\end{definition}

\begin{lemma}\label{lemma:righttoleft}
    Let $\C\to^f\D$ be an exact functor. Denote by $f^{\op}$ the induced functor on opposite categories, and denote by $t\colon\Ddelta\to\Ddelta$ the functor which sends a finite ordinal to its opposite (equivalently, $\mathit{t}^\ast$ reverses the order of simplices in a simplicial diagram). Then there exists a natural equivalence of simplicial diagrams of exact categories
    \begin{equation*}
        \mathit{t}^\ast S^r_\bullet(f^{\op})^{\op}\to^\simeq S^\ell_\bullet(f).
    \end{equation*}
    We emphasize that on the left hand side, we have first replaced $f$ by $f^{\op}$, then applied $S^r_\bullet(-)$, then taken the opposite categories in the simplicial diagram $S^r_\bullet(f^{\op})$ and then reversed the orientation of the simplices in this diagram.
\end{lemma}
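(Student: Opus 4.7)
The plan is to reduce this lemma to a basic duality of the Waldhausen construction, namely the natural identification $S_\bullet(\C^{\op}) \cong t^\ast S_\bullet(\C)^{\op}$ for any exact category $\C$. I would first establish this duality: at each level $n$, a flag $Y_1 \hookrightarrow \cdots \hookrightarrow Y_n$ of admissible monics in $\C^{\op}$ --- together with its chosen cokernels $Y_j/Y_i$ in $\C^{\op}$ --- is the same data as a chain of admissible epics $Y_n \twoheadrightarrow \cdots \twoheadrightarrow Y_1$ in $\C$ with chosen kernels, which in turn is equivalent to a flag of admissible monics in $\C$ by taking those kernels. This gives an isomorphism of the underlying categories at each level; morphisms of flags then go in opposite directions on the two sides, accounting for the outer $^{\op}$. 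Checking $n=2$ (both sides are categories of short exact sequences in $\C$) makes the duality transparent. A direct computation shows that the face map $d_i^{S(\C^{\op})}$ corresponds, under this identification, to $d_{n-i}^{S(\C)}$, which is exactly the $t^\ast$ reindexing.

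The second step is to transport this duality through the path space construction. By definition $P^r S_n = S_{n+1}$ with face maps $d_i^{P^r} = d_{i+1}^S$ and projection $d_0^S$ to $S_\bullet$, while $P^\ell S_n = S_{n+1}$ with $d_i^{P^\ell} = d_i^S$ and projection $d_{n+1}^S$. Under the duality, at level $m = n+1$ the bijection $d_i \leftrightarrow d_{m-i}$ sends the family $\{d_1, \ldots, d_{n+1}\}$ used by $P^r$ to the family $\{d_0, \ldots, d_n\}$ used by $P^\ell$, and sends the omitted index $0$ to the omitted index $n+1$. From this I would deduce a natural identification
\[
    P^r S_\bullet(\C^{\op}) \cong t^\ast P^\ell S_\bullet(\C)^{\op},
\]
compatibly with the projections to $S_\bullet(\C^{\op}) \cong t^\ast S_\bullet(\C)^{\op}$.

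With these dualities in hand, the lemma follows by applying them to the strict pullback square defining $S^r_\bullet(f^{\op})$. Since the level-wise opposite functor and the reindexing $t^\ast$ both preserve strict pullbacks, and $t$ is an involution of $\Delta$, I would conclude
\[
    t^\ast S^r_\bullet(f^{\op})^{\op} \cong S_\bullet(\C) \times_{S_\bullet(\D)} P^\ell S_\bullet(\D) = S^\ell_\bullet(f).
\]
The main obstacle is bookkeeping rather than content: one must carefully track the index reversal $i \mapsto n - i$ on face and degeneracy maps across several layers (opposite category, opposite simplicial direction, pullback), and verify that the specific projection map used by $P^r$ matches --- after the interchange --- the one used by $P^\ell$. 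Once the duality at the level of $S_\bullet$ is pinned down, the remaining diagrams commute essentially automatically, and naturality in $f$ is immediate from the functoriality of the whole construction.
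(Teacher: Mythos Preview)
Your proposal is correct and rests on the same underlying duality as the paper's proof, but the organization differs in a way worth noting. The paper works directly at the level of $S^r_n(f^{\op})^{\op}$: it unfolds this category explicitly as strings of admissible epics $(Y_n\twoheadrightarrow\cdots\twoheadrightarrow Y_1;\,X_{n+1}\twoheadrightarrow\cdots\twoheadrightarrow X_1;\,\varphi)$, writes down the kernel-taking assignment to $S^\ell_n(f)$ by hand, and then checks that $d_i$ corresponds to $d_{n-i}$. Your route is more modular: you first isolate the base duality $S_\bullet(\C^{\op})\cong t^\ast S_\bullet(\C)^{\op}$ (which the paper actually records only \emph{after} the lemma, as a corollary), then observe that it intertwines $P^r$ with $P^\ell$, and finally invoke that level-wise $(-)^{\op}$ and the reindexing $t^\ast$ preserve strict pullbacks. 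This buys you a cleaner argument with less index-tracking, and makes naturality in $f$ automatic; the paper's version, by contrast, hands you the explicit formula for the equivalence, which is what gets used downstream (e.g.\ in the proof of Proposition~\ref{prop:Slprops} to identify $q^\ell$ with $q^{r,\op}$). Either way the content is the same; your bookkeeping concern about matching the omitted face $d_0$ of $P^r$ with the omitted face $d_{n+1}$ of $P^\ell$ is exactly the point, and you have it right.
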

\begin{proof}
    We begin by observing that $S^r_n(f^{\op})^{\op}$ is equivalent to the category consisting of objects
    \begin{equation*}
        (\bar{Y};\bar{X};\varphi):=\left(Y_n\onto\cdots\onto Y_1;X_{n+1}\onto\cdots\onto X_1;\varphi\right)
    \end{equation*}
    where $Y_n\onto\cdots\onto Y_1$ is a string of admissible epics in $\C$, $X_{n+1}\onto\cdots\onto X_1$ is a string of admissible epics in $\D$, and $\varphi$ is an isomorphism
    \begin{equation*}
        (f(Y_n)\onto\cdots\onto f(Y_1))\to^\cong(\ker(X_{n+1}\onto X_1)\onto\cdots\onto\ker(X_2\onto X_1)).
    \end{equation*}
    A morphism $(\bar{Y}^0;\bar{X}^0;\varphi^0)\to(\bar{Y}^1;\bar{X}^1;\varphi^1)$ consists of a collection of morphisms
    \begin{align*}
        Y^0_i&\to Y^1_i\\
        X^0_i&\to X^1_i
    \end{align*}
    making all of the appropriate diagrams commute.

    Consider the assignment which sends $\left(Y_n\onto\cdots\onto Y_1;X_{n+1}\onto\cdots\onto X_1;\varphi\right)$ to
    \begin{align*}
        (&\ker(Y_n\onto Y_{n-1})\into\cdots\into\ker(Y_n\onto Y_1)\into Y_n;\\
        &\ker(X_{n+1}\onto X_n)\into\cdots\into\ker(X_{n+1}\onto X_1)\into X_{n+1};\tilde{\varphi}),
    \end{align*}
    where $\tilde{\varphi}$ denotes the isomorphism
    \begin{align*}
        &\left(\ker(Y_n\onto Y_{n-1})\into\cdots\into\ker(Y_n\onto Y_1)\right)\\
        &\cong\left(\ker(X_{n+1}\onto X_n)\into\cdots\into\ker(X_{n+1}\onto X_1)\right)
    \end{align*}
    induced, by Noether's lemma, from $\varphi$. This extends to an equivalence of categories
    \begin{equation*}
        S^r_n(f^{\op})^{\op}\to^\simeq S^\ell_n(f),
    \end{equation*}
    where the inverse is defined in the analogous manner.

    Under this equivalence, the face map $d_i$ on $S^r_n(\C^{\op}\subset\D^{\op})^{\op}$ corresponds to the face map $d_{n-i}$ on     $S^\ell_n(\C\subset\D)$, while the degeneracy $s_i$ corresponds to the degeneracy $s_{n-i}$. Letting $n$ vary, these
    equivalences determine an equivalence of simplicial diagrams of exact categories.
\end{proof}

\begin{rmk}\label{rmk:rtol}
    Note that the equivalence of Lemma \ref{lemma:righttoleft} fits into a natural commuting square
    \begin{equation*}
        \xymatrix{
            (\D^{\op})^{\op} \ar[r] \ar[d]_1 & \mathit{t}^\ast S^r_\bullet(f^{\op})^{\op} \ar[d]^\simeq\\
            \D \ar[r] & S^\ell_\bullet(f).
        }
    \end{equation*}
\end{rmk}

Taking $\D=0$ in the lemma above, we obtain the following.
\begin{corollary}\label{cor:babyrtol}
    There is a natural equivalence $\mathit{t}^\ast S_\bullet(\C^{\op})^{\op}\simeq S_\bullet(\C)$.
\end{corollary}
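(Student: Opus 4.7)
The plan is to obtain the corollary as the immediate specialization of Lemma \ref{lemma:righttoleft} to the case where $\D$ is the zero exact category and $f\colon\C\to 0$ is the unique exact functor. First I would verify that the relative constructions collapse to the absolute ones in this degenerate case. Since $0$ has a unique object, the defining condition on an object $(Y_1\into\cdots\into Y_n;X_1\into\cdots\into X_{n+1})$ of $S^r_n(f)$ forces $X_i=0$ for all $i$; hence the strict pullback defining $S^r_\bullet(f)$ collapses canonically to $S_\bullet(\C)$, and the analogous argument applies to $S^\ell_\bullet(f)$.

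Substituting these identifications into the equivalence $\mathit{t}^\ast S^r_\bullet(f^{\op})^{\op} \simeq S^\ell_\bullet(f)$ supplied by Lemma \ref{lemma:righttoleft}, applied to $f^{\op}\colon \C^{\op}\to 0$, then yields the desired equivalence $\mathit{t}^\ast S_\bullet(\C^{\op})^{\op}\simeq S_\bullet(\C)$. Concretely, the equivalence at level $n$ is just the kernel-filtration recipe already extracted in the proof of the lemma: an object $X_n\onto\cdots\onto X_1$ of $S_n(\C^{\op})$, viewed in the opposite category, is sent to
\begin{equation*}
\ker(X_n\onto X_{n-1})\into\cdots\into\ker(X_n\onto X_1)\into X_n,
\end{equation*}
and compatibility with face and degeneracy maps (with their order reversed by $\mathit{t}^\ast$) follows from Noether's lemma.

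I do not anticipate any substantive obstacle; the entire content is already packaged in Lemma \ref{lemma:righttoleft}. The only bookkeeping to perform is checking that setting $\D=0$ in that lemma collapses both $S^r_\bullet(f^{\op})$ and $S^\ell_\bullet(f)$ to their respective absolute $S$-constructions, after which the corollary is a direct substitution. A cosmetic aside worth recording is that the commuting square of Remark \ref{rmk:rtol}, specialized to $\D=0$, identifies the natural inclusion $\C\hookrightarrow S_\bullet(\C)$ at simplicial level one with its image under $\mathit{t}^\ast(-)^{\op}$ applied to the analogous inclusion for $\C^{\op}$, so the equivalence we construct is in particular natural in $\C$.
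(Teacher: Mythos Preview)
Your proposal is correct and follows exactly the paper's approach: the paper's proof consists of the single sentence ``Taking $\D=0$ in the lemma above, we obtain the following,'' and your argument is precisely the unpacking of that specialization, verifying that $S^r_\bullet(f^{\op})$ and $S^\ell_\bullet(f)$ collapse to $S_\bullet(\C^{\op})$ and $S_\bullet(\C)$ respectively when $\D=0$.
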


Combining this with the results above, we obtain the following.
\begin{proposition}\label{prop:Slprops}
    Let $\C\to^f\D$ be an exact functor.
    \begin{enumerate}
        \item[(a)] The map
            \begin{align*}
                S^\ell_n(f)&\to^{q^\ell} S_n(\C)\times\D,\intertext{given on objects by the assignment}
                \left(Y_1\hookrightarrow\cdots\hookrightarrow Y_n;X_1\hookrightarrow\cdots\hookrightarrow
                X_{n+1}\right)&\to/|-{>}/^{q^\ell}(Y_1\hookrightarrow\cdots\hookrightarrow Y_n,X_{n+1}/X_n)\intertext{induces a homotopy
                equivalence}
                |S_\bullet(S^\ell_n(f))^\times|&\to^\simeq |S_\bullet S_n(\C)^\times|\times |S_\bullet(\D)^\times|.
            \end{align*}
        \item[(b)] There exists a natural strictly commuting cube
            \begin{equation}\label{rtolcube}
                \xymatrix@=9pt{
                    |S_\bullet(\C^{\op})^\times| \ar[rr]^{f^{\op}} \ar[dr]^\simeq \ar[dd] && |S_\bullet(\D^{\op})^\times| \ar[dr]^\simeq \ar'[d][dd]\\
                    & |S_\bullet(\C)^\times| \ar[dd] \ar[rr]^(.3){f} && |S_\bullet(\D)^\times| \ar[dd] \\
                    |S_\bullet S^r_\bullet(1_{\C^{\op}})^\times| \ar'[r][rr] \ar[dr]^\simeq && |S_\bullet S^r_\bullet(f^{\op})^\times| \ar[dr]^\simeq \\
                    & |S_\bullet S^\ell_\bullet(1_{\C})^\times| \ar[rr] && |S_\bullet S^\ell_\bullet(f)^\times|.
                }
            \end{equation}
            in which all the diagonal arrows are equivalences.  In particular, the front face is a homotopy pullback, and its lower left corner is contractible.
    \end{enumerate}
\end{proposition}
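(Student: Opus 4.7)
The plan is to deduce both parts from the right-handed versions already established in Proposition \ref{prop:rreladd} and Corollary \ref{cor:wald157}, by transferring those statements through the opposite-category and simplex-reversal equivalences of Lemma \ref{lemma:righttoleft} and Corollary \ref{cor:babyrtol}.

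For part (a), I would first apply Proposition \ref{prop:rreladd} to the exact functor $f^{\op} \colon \C^{\op} \to \D^{\op}$, yielding a natural homotopy equivalence between $|S_\bullet(S^r_n(f^{\op}))^\times|$ and $|S_\bullet(\D^{\op})^\times| \times |S_\bullet S_n(\C^{\op})^\times|$. Next, I would use Corollary \ref{cor:babyrtol} to identify each factor of the target with its counterpart in the original categories, and Lemma \ref{lemma:righttoleft} to identify the source with $|S_\bullet(S^\ell_n(f))^\times|$. The remaining step is to check that under these identifications the map $q^r$ for $f^{\op}$ corresponds (up to swapping factors) to the map $q^\ell$ for $f$. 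This is a direct unpacking: under the Noether-lemma identification used in the proof of Lemma \ref{lemma:righttoleft}, the initial object $X_1$ of the upper string of $S^r_n(f^{\op})$, which is what $q^r$ extracts, corresponds to the final quotient $X_{n+1}/X_n$ of the string in $S^\ell_n(f)$, which is precisely what $q^\ell$ extracts.

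For part (b), I would construct the cube by taking its back face to be the cartesian square produced by Corollary \ref{cor:wald157} applied to $f^{\op}$, and its front face to be the square for $f$ that we wish to control. The four diagonals are then: Corollary \ref{cor:babyrtol} for the two top arrows, rewriting $|S_\bullet(\C^{\op})^\times|$ as $|S_\bullet(\C)^\times|$ and analogously for $\D$; and Lemma \ref{lemma:righttoleft} for the two bottom arrows, rewriting $|S_\bullet S^r_\bullet(-)^\times|$ as $|S_\bullet S^\ell_\bullet(-)^\times|$. Commutativity of the cube is precisely the naturality content of Remark \ref{rmk:rtol}. Because all four diagonals are equivalences and the back face is cartesian with contractible lower-left corner (the latter by Lemma \ref{lemma:pathcontract} applied to $\C^{\op}$), both properties transfer to the front face.

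The main obstacle is not conceptual but organizational: one must carefully verify that $q^r$ for $f^{\op}$ corresponds to $q^\ell$ for $f$ under the composition of Lemma \ref{lemma:righttoleft} with Corollary \ref{cor:babyrtol}, and analogously that the cube commutes coherently. Both verifications reduce to tracking how the Noether-lemma identification interacts with face and degeneracy maps after applying $t^\ast$ and passing to opposites; this is straightforward but fiddly.
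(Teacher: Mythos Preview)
Your proposal is correct and follows essentially the same route as the paper: both parts are deduced from the right-handed versions (Proposition \ref{prop:rreladd} and Corollary \ref{cor:wald157}) applied to $f^{\op}$, then transported via Lemma \ref{lemma:righttoleft} and Corollary \ref{cor:babyrtol}. The paper's write-up differs only in that it makes explicit the two auxiliary equivalences you fold into ``identifications'': the groupoid equivalence $\mathcal{G}\simeq\mathcal{G}^{\op}$ given by $g\mapsto g^{-1}$, and the realization equivalence $|X|\simeq|t^\ast X|$, and it assembles part (b) as a composite of three successive cubes rather than a single cube with naturality cited all at once.
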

\begin{proof}
    For the first statement, the definition of the map $q^\ell$ ensures that, under the equivalences of Lemma \ref{lemma:righttoleft}, it is naturally isomorphic to the map
    \begin{equation*}
        S^r_n(f^{\op})^{\op}\to^{q^{r,\op}} S_n(\C^{\op})^{\op}\times(\D^{\op})^{\op},
    \end{equation*}
    where $q^r$ is the map associated to $\C^{\op}\to^{f^{\op}}\D^{\op}$ in Proposition \ref{prop:rreladd}. Applying the $S$-construction, we obtain a map
    \begin{equation*}
        S_\bullet(S^r_n(f^{\op})^{\op})\to^{S_\bullet q^{r,\op}} S_\bullet(S_n(\C^{\op})^{\op})\times S_\bullet(\D).
    \end{equation*}
    By Corollary \ref{cor:babyrtol}, this map is naturally equivalent to
    \begin{equation*}
        \mathit{t}^\ast S_\bullet(S^r_n(f^{\op}))^{\op}\to^{\mathit{t}^\ast S_\bullet q^r} \mathit{t}^\ast S_\bullet(S_n(\C^{\op})\times \D^{\op})^{\op}
    \end{equation*}
    For any groupoid $\mathcal{G}$, the assignment (on morphisms) $g\mapsto g^{-1}$ determines a natural equivalence of groupoids    $\mathcal{G}\to^\simeq\mathcal{G}^{\op}$. There is also a canonical natural equivalence $|\mathit{t}^\ast X|\to^\simeq|X|$ for any simplicial set $X_\bullet$. Applying both of these equivalences, we see that the previous map fits into a commuting square of spaces
    \begin{equation*}
        \xymatrix{
            |\mathit{t}^\ast (S_\bullet(S^r_n(f^{\op}))^{\op})^\times| \ar[d]_\simeq \ar[rr]^{|\mathit{t}^\ast S_\bullet q^r|} && |\mathit{t}^\ast (S_\bullet(S_n(\C^{\op})\times \D^{\op})^{\op})^\times| \ar[d]^\simeq\\
            |S_\bullet(S^r_n(f^{\op}))^\times| \ar[rr] && |S_\bullet(S_n(\C^{\op})\times\D^{\op})^\times|.
        }
    \end{equation*}
    Applying the canonical equivalence $|S_\bullet(S_n(\C^{\op})\times\D^{\op})^\times|\to^\simeq |S_\bullet(S_n(\C^{\op}))^\times|\times |S_\bullet(\D^{\op})^\times|$, we see that the bottom map is the equivalence of Proposition \ref{prop:rreladd}.

    The second statement follows by a similar argument. The equivalences $\mathcal{G}\simeq \mathcal{G}^{\op}$ and $|X|\simeq|\mathit{t}^\ast X|$ determine a commuting cube
    \begin{equation*}
        \xymatrix@=9pt{
            |S_\bullet(\C^{\op})^\times| \ar[rr]^{f^{\op}} \ar[dr]^\simeq \ar[dd] && |S_\bullet(\D^{\op})^\times| \ar[dr]^\simeq \ar'[d][dd]\\
            & |\mathit{t}^\ast (S_\bullet(\C^{\op})^{\op})^\times| \ar[dd] \ar[rr]^(.3){f^{\op}} && |\mathit{t}^\ast (S_\bullet(\D^{\op})^{\op})^\times| \ar[dd] \\
            |S_\bullet S^r_\bullet(1_{\C^{\op}})^\times| \ar'[r][rr] \ar[dr]^\simeq && |S_\bullet S^r_\bullet(f^{\op})^\times| \ar[dr]^\simeq \\
            & |\mathit{t}^\ast (S_\bullet(S^r_\bullet(1_{\C^{\op}}))^{\op})^\times| \ar[rr] && |\mathit{t}^\ast (S_\bullet (S^r_\bullet(f^{\op}))^{\op})^\times|.
        }
    \end{equation*}
    Applying the equivalence of Corollary \ref{cor:babyrtol} to the outer $S_\bullet$, we see that the front face of this square fits into a commuting cube
    \begin{equation*}
        \xymatrix@=9pt{
            |\mathit{t}^\ast (S_\bullet(\C^{\op})^{\op})^\times| \ar[dd]\ar[dr]^\simeq \ar[rr]^{f^{\op}} && |\mathit{t}^\ast (S_\bullet(\D^{\op})^{\op})^\times| \ar'[d][dd] \ar[dr]^\simeq \\
            & |S_\bullet(\C)^\times| \ar[rr]^(.3){f} \ar[dd] && |S_\bullet(\D)^\times| \ar[dd]\\
            |\mathit{t}^\ast (S_\bullet(S^r_\bullet(1_{\C^{\op}}))^{\op})^\times| \ar[dr]^\simeq \ar'[r][rr] && |\mathit{t}^\ast (S_\bullet(S^r_\bullet(f^{\op}))^{\op})^\times| \ar[dr]^\simeq \\
            & |S_\bullet(S^r_\bullet(1_{\C^{\op}})^{\op})^\times| \ar[rr] && |S_\bullet(S^r_\bullet(f^{\op})^{\op})^\times|.
        }
    \end{equation*}
    Applying the equivalence of Lemma \ref{lemma:righttoleft} to the inner $S$-construction, we obtain a commuting cube
    \begin{equation*}
        \xymatrix@=9pt{
            |S_\bullet(\C)^\times| \ar[rr]^f \ar[dd] \ar[dr]^1 && |S_\bullet(\D)^\times| \ar'[d][dd] \ar[dr]^1 \\
            & |S_\bullet(\C)^\times| \ar[rr]^(.3){f} \ar[dd] && |S_\bullet(\D)^\times| \ar[dd] \\
            |S_\bullet(S^r_\bullet(1_{\C^{\op}})^{\op})^\times| \ar[rr] \ar[dr]^\simeq && |S_\bullet(S^r_\bullet(f^{\op})^{\op})^\times| \ar[dr]^\simeq\\
            & |S_\bullet(\mathit{t}^\ast S^\ell_\bullet(1_{\C}))^\times| \ar[rr] && |S_\bullet(\mathit{t}^\ast S^\ell_\bullet(f))^\times|.
        }
    \end{equation*}
    By a final application of the equivalence $|X|\simeq |\mathit{t}^\ast X|$, we see that the front face of this cube is equivalent to the front face of \eqref{rtolcube}. Composing this equivalence with the cubes above, we obtain the cube \eqref{rtolcube} as claimed.
\end{proof}

\subsubsection{The Localization Sequence for Exact Categories}
In \cite{MR2079996}, Schlichting established a fundamental ``Localization Theorem'' for the $K$-theory of exact categories.
\begin{proposition}(Schlichting \cite[Lemma 2.3]{MR2079996})\label{prop:Schlichting1}
    Let $\C\subset\D$ be the inclusion of an idempotent complete, right s-filtering sub-category. Consider the map of simplicial diagrams of categories
    \begin{equation*}
        S^r_\bullet(\C\subset\D)\to^q \D/\C
    \end{equation*}
    given by the assignment
    \begin{equation*}
        \left(Y_1\hookrightarrow\cdots\hookrightarrow Y_n;X_1\hookrightarrow \cdots\hookrightarrow
        X_{n+1}\right)\mapsto X_{n+1}.
    \end{equation*}
    Then all of the diagonal arrows are equivalences in the commuting cube
    \begin{equation*}
        \xymatrix@=9pt{
            |S_\bullet(\C)^\times| \ar[dr]^1 \ar[rr]^f \ar[dd] && |S_\bullet(\D)^\times| \ar'[d][dd] \ar[dr]^1 \\
            & |S_\bullet(\C)^\times| \ar[rr]^(.3){f} \ar[dd] && |S_\bullet(\D)^\times| \ar[dd]\\
            |S_\bullet S^r_\bullet(1_{\C})^\times| \ar'[r][rr] \ar[dr] && |S_\bullet S^r_\bullet(f)^\times| \ar[dr]^{|S_\bullet q|} \\
            & \ast \ar[rr] && |S_\bullet(\D/\C)^\times|. }
    \end{equation*}
\end{proposition}

Combined with Proposition \ref{prop:Slprops}, this implies the following.
\begin{proposition}\label{prop:Schlichtcube}
    Let $\C\subset\D$ be the inclusion of an idempotent complete, left s-filtering sub-category. Consider the map of simplicial diagrams of categories
    \begin{equation*}
        S^\ell_\bullet(\C\subset\D)\to^q \D/\C
    \end{equation*}
    given by the assignment
    \begin{equation*}
        \left(Y_1\hookrightarrow\cdots\hookrightarrow Y_n;X_1\hookrightarrow\cdots\hookrightarrow
        X_{n+1}\right)\mapsto X_{n+1}.
    \end{equation*}
    Then all of the diagonal arrows are equivalences in the homotopy coherent cube (i.e. commuting cube in the $\infty$-category of spaces)
    \begin{equation*}
        \xymatrix@=9pt{
            |S_\bullet(\C)^\times| \ar[dr]^1 \ar[rr]^f \ar[dd] && |S_\bullet(\D)^\times| \ar'[d][dd] \ar[dr]^1 \\
            & |S_\bullet(\C)^\times| \ar[rr]^(.3){f} \ar[dd] && |S_\bullet(\D)^\times| \ar[dd]\\
            |S_\bullet S^\ell_\bullet(1_{\C})^\times| \ar'[r][rr] \ar[dr] && |S_\bullet S^\ell_\bullet(f)^\times| \ar[dr]^{|S_\bullet q|} \\
            & \ast \ar[rr] && |S_\bullet(\D/\C)^\times|. }
    \end{equation*}
\end{proposition}
\begin{proof}
    Because $\C\subset\D$ is left s-filtering, $\C^{\op}\subset\D^{\op}$ is right s-filtering. We can now compose the cube of Proposition \ref{prop:Schlichting1} with the cube \eqref{rtolcube} of Proposition \ref{prop:Slprops} to obtain a cube of the form above. By tracing through the construction, we see that the equivalence in this cube
    \begin{equation*}
        |S_\bullet S^\ell_\bullet(\C\subset\D)^\times|\to^\simeq |S_\bullet(\D/\C)^\times|
    \end{equation*}
    is given by the map $|S_\bullet q|$ above.
\end{proof}

Combined with Propositions \ref{prop:wald157} and \ref{prop:Slprops}, these propositions give the following.
\begin{theorem}[Schlichting's Localization Theorem]\label{thm:schlichting}
    Let $\C\subset\D$ be the inclusion of an idempotent complete, left or right s-filtering sub-category. Then the square
    \begin{equation}\label{schlict}
        \xymatrix{
            K_{\C} \ar[r] \ar[d] & K_{\D} \ar[d] \\
            \ast \ar[r] & K_{\D/\C}
        }
    \end{equation}
    is a homotopy pullback.
\end{theorem}

\subsubsection{Boundary Maps in Algebraic $K$-Theory}\label{subsub:boundary}
By the universal property of homotopy pullbacks, the Localization Theorem associates, to a left or right s-filtering sub-category $\C\subset\D$, a boundary map
\begin{equation*}
    \Omega K_{\D/\C}\to^\partial K_{\C}.
\end{equation*}
Explicitly, let $q\colon \D\to\D/\C$ denote the canonical exact functor, and also the map on $K$-theory. For a based space $X$, let $PX$ denote the space of paths beginning at the base point, and let $p\colon PX\to X$ send a path to its endpoint. The inclusion of the constant path $\ast\to PK_{\D/\C}$ induces a canonical homotopy coherent cube (i.e. commuting cube in the $\infty$-category of spaces)
\begin{equation*}
    \xymatrix@=9pt{
        K_{\C} \ar[dr]^\simeq \ar[rr]^f \ar[dd] && K_{\D} \ar'[d][dd] \ar[dr]^1 \\
        & PK_{\D/\C}\times_{K_{\D/\C}} K_{\D} \ar[rr] \ar[dd] && K_{\D} \ar[dd]\\
        \ast \ar'[r][rr] \ar[dr] && K_{\D/\C} \ar[dr]^{1} \\
        & PK_{\D/\C}  \ar[rr] && K_{\D/\C}. }
\end{equation*}
in which the diagonal arrows are all equivalences. By the universal property of homotopy pullbacks, the front face determines a contractible space of maps
\begin{equation}\label{pindown}
    \Omega K_{\D/\C}=PK_{\D/\C}\times_{K_{\D/\C}}\ast\to  PK_{\D/\C}\times_{K_{\D/\C}} K_{\D}.
\end{equation}
The space of homotopy inverses of the homotopy equivalence
\begin{equation*}
    K_{\C}\to^\simeq PK_{\D/\C}\times_{K_{\D/\C}} K_{\D}
\end{equation*}
is contractible. Therefore, up to a contractible space of choices, the map \eqref{pindown} determines a map
\begin{equation*}
    \Omega K_{\D/\C}\to^\partial K_{\C}.
\end{equation*}
We refer to this as \emph{the} boundary map in the localization sequence.

For our applications, we will need to be able to describe this map in some detail. We therefore use this section to explain how the results of Waldhausen which we recalled above lead to an explicit description of this boundary map.

The proof of the Localization Theorem gives a canonical equivalence from the $K$-theory localization sequence to the looping of the Waldhausen fibration sequence \eqref{rWaldsquare}. To avoid a proliferation of $\Omega$s on the page, we will describe the boundary map
\begin{equation}\label{rWaldboundmap}
    \Omega|S_\bullet S^r_\bullet(f)^\times|\to^\partial|S_\bullet(\C)^\times|.
\end{equation}
Note that the boundary map in $K$-theory is obtained by applying $\Omega$ to this map.  Note also that, by Proposition \ref{prop:Slprops}, our description will immediately imply an analogous description of the map
\begin{equation*}\label{lWaldboundmap}
    \Omega|S_\bullet S^\ell_\bullet(f)^\times|\to^\partial|S_\bullet(\C)^\times|.
\end{equation*}

\begin{proposition}\label{prop:waldbound}
    Let $\C\to^f\D$ be an exact functor. The boundary map \eqref{rWaldboundmap} fits into a canonical homotopy commuting triangle (i.e. commuting triangle in the $\infty$-category of spaces)
    \begin{equation}\label{Waldbounddiagram}
        \xymatrix{
            \Omega|S_\bullet S^r_\bullet(f)^\times| \ar[drr]_\partial \ar[rr]^{\Omega|S_\bullet\delta|} && \Omega|S_\bullet
            S_\bullet(\C)^\times| \ar[d]^\simeq \\
            && |S_\bullet(\C)^\times|.
        }
    \end{equation}
    The equivalence in the triangle is inverse to the equivalence induced by the inclusion of vertical 1-simplices $S_\bullet(\C)\to^1 S_\bullet S_1(\C)$.
\end{proposition}
\begin{proof}
    By Corollary \ref{cor:wald157}, the commuting square of exact functors
    \begin{equation*}
        \xymatrix{
            \C \ar[r]^1 \ar[d]_f & \C \ar[d] \\
            \D \ar[r] & 0
        }
    \end{equation*}
    determines a commuting cube of spaces
    \begin{equation*}
        \xymatrix@=9pt{
            |S_\bullet(\C)^\times| \ar[rr]^1 \ar[dr]^f \ar[dd] && |S_\bullet(\C)^\times| \ar[dr] \ar'[d][dd] \\
            & |S_\bullet(\D)^\times| \ar[rr] \ar[dd] && \ast \ar[dd] \\
            |S_\bullet S^r_\bullet(1_{\C})^\times| \ar'[r][rr]^(.3){1} \ar[dr] && |S_\bullet S^r_\bullet(1_{\C})^\times|
            \ar[dr] \\
            & |S_\bullet S^r_\bullet(f)^\times| \ar[rr]^{|S_\bullet\delta|} && |S_\bullet S_\bullet(\C)^\times|
        }
    \end{equation*}
    in which the left and right faces are homotopy pullbacks. By Lemma \ref{lemma:pathcontract}, the lower rear corners of this diagram are contractible.  Therefore, by the universal property of homotopy pullbacks, this diagram determines a canonical homotopy commuting square
    \begin{equation*}
        \xymatrix{
            \Omega|S_\bullet S^r_\bullet(f)^\times| \ar[d]_\partial \ar[rr]^{\Omega|S_\bullet\delta|} && \Omega|S_\bullet
            S_\bullet(\C)^\times|  \ar[d]^\simeq \\
            |S_\bullet(\C)^\times| \ar[rr]^1 && |S_\bullet(\C)^\times|
        }
    \end{equation*}
    or equivalently, a homotopy commuting triangle as in \eqref{Waldbounddiagram}. As Waldhausen observed in \cite[Lemma
    1.5.2]{MR0802796}, the equivalence induced by the right face of the above cube is inverse to the equivalence
    $|S_\bullet(\C)^\times|\to^\simeq\Omega|S_\bullet S_\bullet(\C)^\times|$ induced by the inclusion of vertical 1-simplices
    $S_\bullet(\C)\to^1 S_\bullet S_1(\C)$.
\end{proof}

Combining the above, we obtain the following.
\begin{proposition}\label{prop:Schlichting}
    If $\C\subset\D$ is the inclusion of an idempotent complete, right s-filtering sub-category, then there exists a canonical homotopy commuting square (i.e. commuting square in the $\infty$-category of spaces)
    \begin{equation*}
        \xymatrix{
            \Omega|S_\bullet S^r_\bullet(\C\subset\D)^\times| \ar[d]_\simeq \ar[rr] && |S_\bullet(\C)^\times| \ar[d]^1\\
            \Omega|S_\bullet(\D/\C)^\times| \ar[rr]^\partial && |S_\bullet(\C)^\times|
        }
    \end{equation*}
    where the bottom map is the boundary map associated of the localization sequence, and the top map is the composition
    \begin{equation*}
        \Omega|S_\bullet S^r_\bullet(\C\subset\D)^\times|\to^{\Omega|S_\bullet \delta|} \Omega|S_\bullet S_\bullet(\C))^\times|\to^\simeq |S_\bullet(\C)^\times|.
    \end{equation*}
    Similarly, by tracing through the proof of the above, we see that if $\C\subset\D$ is the inclusion of an idempotent complete, left s-filtering sub-category, then there exists a canonical homotopy commuting square
    \begin{equation*}
        \xymatrix{
            \Omega|S_\bullet S^\ell_\bullet(\C\subset\D)^\times| \ar[d]_\simeq \ar[rr] && |S_\bullet(\C)^\times| \ar[d]^{1}\\
            \Omega|S_\bullet(\D/\C)^\times| \ar[rr]^{-\partial} && |S_\bullet(\C)^\times|
        }
    \end{equation*}
    where the top map is the composition
    \begin{equation*}
        \Omega|S_\bullet S^\ell_\bullet(\C\subset\D)^\times|\to^{\Omega|S_\bullet \delta|} \Omega|S_\bullet S_\bullet(\C)^\times|\to^\simeq |S_\bullet(\C)^\times|.
    \end{equation*}
\end{proposition}

\section{The Index Map}\label{index}
In this section, we use Theorem \ref{thm:sato_filtered} to define the index map $\Omega K_{\elTate(\C)}\to^{\Index} K_{\C}$. In Theorem \ref{thm:grsec} and Corollary \ref{cor:explicitindex}, we produce an explicit combinatorial model for this map. Using the Additivity Theorem, we show in Theorem \ref{thm:deloop} that the index map is an equivalence. We conclude this section with Theorem \ref{thm:loc}, where we relate the index map to the boundary map associated by the Localization Theorem to the left s-filtering inclusion $\C\subset\Ind(\C)$.

\subsection{The Categorical Index Map}
For $n\ge 0$, denote by $[n]$ the partially ordered set $\{0<\ldots<n\}$, and denote by $\Fun([n],\C)$ the category of functors
from the partially ordered set $[n]$, viewed as a category, to a category $\C$.

\begin{definition}
    Let $\C$ be an exact category. Define the \emph{Sato complex} $\Gr_\bullet^\le(\C)$ to be the simplicial diagram of exact
    categories with
    \begin{enumerate}
        \item $n$-simplices $\Gr_n^\le(\C)$ given by the full sub-category of $\Fun([n+1],\elTate(\C))$ consisting of
            sequences
            of admissible monics
            \begin{equation*}
                L_0\hookrightarrow\cdots\hookrightarrow L_n\hookrightarrow V
            \end{equation*}
            where, for all $i$, $L_i\hookrightarrow V$ is the inclusion of a lattice,\footnote{To see that this is an exact
            category, observe that because $\Pro(\C)$ and $\Ind(\C)$ are closed under extensions in $\elTate(\C)$,
            $\Gr_n^\le(\C)$ is closed under extensions in $\Fun([n+1],\elTate(\C))$.}
        \item face maps are given by the functors
            \begin{equation*}
                d_i(L_0\hookrightarrow\cdots\hookrightarrow L_n\hookrightarrow V):=(L_0\hookrightarrow\cdots\hookrightarrow
                L_{i-1}\hookrightarrow L_{i+1}\hookrightarrow\cdots\hookrightarrow L_n\hookrightarrow V),
            \end{equation*}
        \item and degeneracy maps are given by the functors
            \begin{equation*}
                s_i(L_0\hookrightarrow\cdots\hookrightarrow L_n\hookrightarrow V):=(L_0\hookrightarrow\cdots\hookrightarrow
                L_i\hookrightarrow L_i\hookrightarrow\cdots\hookrightarrow L_n\hookrightarrow V).
            \end{equation*}
    \end{enumerate}
\end{definition}

The simplicial object $\Gr^{\leq}_{\bullet}(\C)$ allows us to introduce the index map.

\begin{definition}\label{def:CatIndexMap}
    Let $\C$ be an exact category. The \emph{categorical index map} is the span of simplicial maps
    \begin{equation}\label{catindex}
        \elTate(\C)\longleftarrow\Gr_\bullet^\le(\C)\to^\Index S_\bullet(\C),
    \end{equation}
    where the left-facing arrow is given on $n$-simplices by the assignment
    \begin{equation*}
        (L_0\hookrightarrow\cdots\hookrightarrow L_n\hookrightarrow V)\mapsto V,
    \end{equation*}
    and $\Index$ is given on $n$-simplices by the assignment
    \begin{equation*}
        (L_0\hookrightarrow\cdots\hookrightarrow L_n\hookrightarrow V)\mapsto(L_1/L_0\hookrightarrow\cdots\hookrightarrow
        L_n/L_0).
    \end{equation*}
\end{definition}

\subsection{The $K$-Theoretic Index Map}
In this section, we explain how the categorical index map determines an index map in $K$-theory.

\begin{proposition}\label{prop:gr}
    Let $\C$ be an idempotent complete exact category. Then the map from $\Gr_\bullet^\le(\C)$ to $\elTate(\C)$ of
    \eqref{catindex} induces an equivalence
    \begin{equation*}
        |\Gr_\bullet^\le(\C)^\times|\to^\simeq|\elTate(\C)^\times|.
    \end{equation*}
\end{proposition}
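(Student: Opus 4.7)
The plan is to analyze the map of simplicial groupoids $\Gr_\bullet^\le(\C)^\times \to \elTate(\C)^\times$ (target viewed as a constant simplicial groupoid) by computing its fibres, and to show that each fibre is homotopy equivalent to the nerve of the Sato Grassmannian $\Gr(V)$. Since that nerve is contractible by Theorem \ref{thm:sato_filtered}(c), the map will induce an equivalence on realizations.

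First I would identify the strict fibres. For each $n$, the functor $\Gr_n^\le(\C)^\times \to \elTate(\C)^\times$ sending $(L_0 \hookrightarrow \cdots \hookrightarrow L_n \hookrightarrow V) \mapsto V$ is a Grothendieck fibration of groupoids, because any isomorphism $V \xrightarrow{\sim} V'$ transports a chain of lattices in $V$ to one in $V'$. Its strict fibre over $V$ is the discrete set $N_n \Gr(V)$ of length-$n$ chains of lattices in $V$; discreteness holds because any automorphism of a chain covering the identity on $V$ must fix each $L_i$, these being sub-objects of $V$. As $n$ varies, these fibres assemble into the simplicial set $N_\bullet \Gr(V)$.

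Second, I would promote this fibrewise computation to an equivalence of realizations. The cleanest route is Thomason's theorem, which identifies the realization of a Grothendieck construction with the homotopy colimit of the associated diagram: here $\Gr_\bullet^\le(\C)^\times$ is the Grothendieck construction of the functor $\elTate(\C)^\times \to \Spaces$, $V \mapsto |N_\bullet \Gr(V)|$, giving
\[
|\Gr_\bullet^\le(\C)^\times| \simeq \hocolim_{V \in \elTate(\C)^\times} |N_\bullet \Gr(V)|,
\]
with the map in the proposition corresponding to the canonical map to the constant hocolim. Alternatively, one may appeal directly to Quillen's Theorem B, observing that the strict and homotopy fibres coincide level-wise since the former are already discrete.

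Finally, Theorem \ref{thm:sato_filtered}(c) supplies the contractibility: when $\C$ is idempotent complete, any two lattices in $V$ admit a common upper bound in $\Gr(V)$, so $\Gr(V)$ is a filtered poset and its nerve is contractible. A hocolim of contractible spaces over $\elTate(\C)^\times$ is canonically equivalent to the classifying space of $\elTate(\C)^\times$, yielding the claimed equivalence. The main obstacle is the passage from fibrewise equivalences to an equivalence of realizations, but since the fibres are discrete this reduces to a bookkeeping exercise with whichever framework (Thomason, Quillen B, or a Bousfield--Friedlander bisimplicial fibration argument) one prefers.
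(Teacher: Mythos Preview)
Your proof is correct and is essentially the same as the paper's own argument: both identify $\Gr_\bullet^\le(\C)^\times$ as the Grothendieck construction of the functor $V\mapsto N_\bullet\Gr(V)$ on $\elTate(\C)^\times$, invoke Thomason's identification of its realization with the homotopy colimit, and conclude by the contractibility of each $|N_\bullet\Gr(V)|$ via the filteredness of $\Gr(V)$ from Theorem~\ref{thm:sato_filtered}(c). The paper phrases the Thomason step as ``commuting the two colimits,'' but the content is identical.
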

The core of the proof is the fact that the fibers of the above map are classifying spaces of filtered partially ordered sets, and hence contractible. In more detail:
\begin{proof}[Proof of Proposition \ref{prop:gr}]
    Geometric realizations are homotopy colimits. Similarly, the groupoid $\Gr^{\leq}_n(\C)^\times$ is
    the Grothendieck construction of the set-valued functor $$\Gr_n^{\leq}(-)\colon \elTate(\C)^{\times} \to \Set,$$ i.e. it is
    a homotopy colimit in the category of groupoids.

    Commuting the two homotopy colimits we obtain the following equivalence
    \begin{equation*}
        |\Gr_{\bullet}^{\leq}(\C)^\times| \simeq \hocolim_{V \in \elTate(\C)^{\times}} |\Gr_\bullet^{\leq}(V)|.
    \end{equation*}
    For a fixed Tate object, the geometric realization $|\Gr_{\bullet}^{\leq}(V)|$ is the classifying space of the
    category corresponding to the partially ordered set $\Gr(V)$. Because the partially ordered set $\Gr(V)$ is directed (Theorem    \ref{thm:sato_filtered}(c)), its classifying space is contractible. This implies that
    \begin{equation*}
        |\Gr^{\leq}_{\bullet}(\C)^\times| \simeq \hocolim_{\elTate(\C)^{\times}} \{\star\} |\simeq \elTate(\C)^{\times}|.
    \end{equation*}
\end{proof}

\begin{corollary}\label{cor:index}
    Let $\C$ be idempotent complete. The categorical index map determines a map
    \begin{equation*}
        \Index\colon\elTate(\C)^\times\to|S_\bullet(\C)^\times| \cong BK_{\C}.
    \end{equation*}
\end{corollary}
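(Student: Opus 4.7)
The plan is essentially to apply geometric realization of maximal sub-groupoids to the span of simplicial objects constituting the categorical index map, and then invert the (now invertible) left leg. Explicitly, applying $(-)^\times$ levelwise and then $|{-}|$ to the span
\begin{equation*}
    \elTate(\C) \longleftarrow \Gr_\bullet^\le(\C) \to^\Index S_\bullet(\C)
\end{equation*}
yields a span of spaces
\begin{equation*}
    \elTate(\C)^\times \longleftarrow |\Gr_\bullet^\le(\C)^\times| \to^{|\Index|} |S_\bullet(\C)^\times|,
\end{equation*}
in which the left-facing arrow is an equivalence by Proposition \ref{prop:gr}. Inverting it in the $\infty$-category of spaces and composing with the right leg produces the desired map $\Index\colon \elTate(\C)^\times \to |S_\bullet(\C)^\times|$.

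It remains to identify the target with $BK_{\C}$. Here I would observe that $S_0(\C)^\times$ is a point (the only $0$-simplex is the empty string of admissible monics), so the bisimplicial set underlying $S_\bullet(\C)^\times$ has a unique vertex after realization, and hence $|S_\bullet(\C)^\times|$ is connected. Combined with the definition $K_\C = \Omega|S_\bullet(\C)^\times|$ from Definition \ref{defi:Waldhausen_K} and the standard equivalence $X\simeq B\Omega X$ for pointed connected spaces $X$, this gives $|S_\bullet(\C)^\times|\simeq BK_\C$.

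The only non-formal ingredient is Proposition \ref{prop:gr}, which is where idempotent completeness enters through Theorem \ref{thm:sato_filtered}(c): it guarantees that the Sato Grassmannian $\Gr(V)$ of any elementary Tate object is a directed poset, hence has contractible classifying space, which is exactly what makes the augmentation $|\Gr_\bullet^\le(\C)^\times| \to \elTate(\C)^\times$ an equivalence. Since that proposition is already established, I do not anticipate any further obstacle; the corollary is a direct consequence.
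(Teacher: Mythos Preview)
Your proposal is correct and is exactly the intended argument: the paper does not give a separate proof of this corollary, treating it as an immediate consequence of Proposition~\ref{prop:gr}, and what you have written simply makes that derivation explicit. The identification $|S_\bullet(\C)^\times|\simeq BK_{\C}$ via connectedness and Definition~\ref{defi:Waldhausen_K} is also standard and handled just as you describe.
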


Moreover, using the canonical equivalence $S_k(\elTate(\C))\simeq\elTate(S_k(\C))$ \cite[Prop. 5.13]{Braunling:2014fk}, we also
obtain the following.
\begin{corollary}\label{cor:enhanced}
    Let $\C$ be idempotent complete. The categorical index map determines a map of infinite loop spaces
    \begin{equation}\label{infiniteloop}
        B\Index\colon |S_\bullet(\elTate(\C))^\times|\to|S_\bullet S_\bullet(\C)^\times|
    \end{equation}
    which fits into a commuting triangle
    \begin{equation}\label{connKtriang}
        \xymatrix{
            \elTate(\C)^\times \ar[d] \ar[rr]^{\Index} && \Omega|S_\bullet S_\bullet(\C)^\times|       \\
               \Omega|S_\bullet\elTate(\C)^\times| \ar[urr]_{\qquad \Omega B\Index}                     }
    \end{equation}
\end{corollary}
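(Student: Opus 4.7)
The plan is to produce $B\Index$ by applying the construction of the index map level-wise to the simplicial exact category $S_\bullet(\C)$, and then to deduce the triangle by identifying what happens at the 1-simplex level of the outer $S$-construction. Concretely, for each $k\geq 0$, idempotent completeness of $\C$ is inherited by $S_k(\C)$, so Corollary \ref{cor:index} yields an index map
\begin{equation*}
    \Index_{S_k(\C)}\colon \elTate(S_k(\C))^\times\to |S_\bullet(S_k(\C))^\times|.
\end{equation*}
The Sato complex $\Gr^\le_\bullet(-)$ is manifestly functorial in exact functors, as is the span \eqref{catindex}, so these maps are natural with respect to the face and degeneracy operators coming from $S_\bullet(\C)$. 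Applying the natural equivalence $S_k(\elTate(\C))\simeq \elTate(S_k(\C))$ of \cite[Prop.~5.10]{Braunling:2014fk} rewrites the source as $S_k(\elTate(\C))^\times$, and assembling over $k$ gives a map of bisimplicial spaces. Taking geometric realization (and recalling that geometric realization of a bisimplicial set agrees with that of its diagonal) produces the desired map $B\Index$ of \eqref{infiniteloop}.

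For the commuting triangle \eqref{connKtriang}, the key observation is that at $k=1$, the equivalence $S_1(\elTate(\C))^\times\cong\elTate(\C)^\times$ identifies the $k=1$ slice of our construction with the original map $\Index\colon \elTate(\C)^\times\to|S_\bullet(\C)^\times|$. The inclusion of 1-simplices map $\elTate(\C)^\times\to \Omega|S_\bullet\elTate(\C)^\times|$ of Remark \ref{rmk:natural} is, by construction, the map induced on loop spaces by this slice inclusion. Hence $\Omega B\Index$ restricted along this inclusion recovers the composition of $\Index$ with the canonical equivalence $|S_\bullet(\C)^\times|\simeq \Omega|S_\bullet S_\bullet(\C)^\times|$, which is exactly the commutativity of \eqref{connKtriang}. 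The infinite loop space enhancement follows by iterating: the entire construction above made no assumption particular to a single $S$-construction, so applying it to each further iterate $S_\bullet^{(n)}(\C)$ (using that $\elTate$ commutes with iterated $S$-constructions by \cite[Prop.~5.10]{Braunling:2014fk} applied inductively) produces the requisite compatible maps of $n$-fold deloopings.

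The main obstacle is bookkeeping rather than conceptual. One must verify that the equivalence $S_k(\elTate(\C))\simeq \elTate(S_k(\C))$ is natural in $k$ in a way compatible with the Sato complex and the span \eqref{catindex}, so that the resulting diagram of bisimplicial objects really commutes strictly (or at least coherently). Since the Sato complex is built purely from the admissible monic and lattice structure, and the equivalence of \cite[Prop.~5.10]{Braunling:2014fk} is itself induced from a level-wise comparison of exact structures, the compatibility amounts to unwinding definitions. The left leg of the level-wise span induces an equivalence after realization by Proposition \ref{prop:gr} applied fibrewise in $k$, together with the fact that geometric realization of simplicial spaces preserves level-wise equivalences, justifying that the span can be inverted to give an honest map of spaces on realizations.
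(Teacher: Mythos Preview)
Your proposal is correct and follows essentially the same approach as the paper: both use the equivalence $S_k(\elTate(\C))\simeq\elTate(S_k(\C))$ from \cite[Prop.~5.10]{Braunling:2014fk} together with the naturality of the span \eqref{catindex} to assemble level-wise index maps into a simplicial map, identify the inclusion of 1-simplices as giving the triangle \eqref{connKtriang}, and iterate through higher $S$-constructions for the infinite loop structure. The paper's proof is slightly more terse in that it writes down the composite $|S_\bullet^{\times n}(\elTate(\C))^\times|\to|S_\bullet S_\bullet^{\times n}(\C)^\times|$ directly for each $n$ and asserts the commuting squares by inspection, whereas you spell out the $n=1$ case first and then appeal to iteration, but the content is the same.
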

\begin{proof}
    Recall from \cite[p. 329]{MR0802796} that the infinite loop space structures on $|S_\bullet\elTate(\C)^\times|$ and
    $|S_\bullet S_\bullet(\C)^\times|$ are given by the chain of equivalences
    \begin{align*}
        |S_\bullet^{\times n}(\elTate(\C))^\times|&\to^\simeq\Omega|S_\bullet^{\times n+1}(\elTate(\C))^{\times}|,\intertext{and}
        |S_\bullet^{\times n+1}(\C)^\times|&\to^\simeq\Omega|S_\bullet^{\times n+2}(\C)^{\times}|,
    \end{align*}
    which are determined by the inclusions of 1-simplices
    \begin{align*}
        \Delta^1\times S_\bullet^{\times n}(\elTate(\C))^\times&\to S_\bullet^{\times n+1}(\elTate(\C))^\times,\intertext{and}
        \Delta^1\times S_\bullet^{\times n+1}(\C)^\times&\to S_\bullet^{\times n+2}(\C)^\times.
    \end{align*}
    For each $n$, we have a map
    \begin{equation*}
        |S_\bullet^{\times n}(\elTate(\C))^\times|\to^\simeq |\elTate(S_\bullet^{\times
        n}(\C))^\times|\to^{\simeq}|\Gr^\le_\bullet(S_\bullet^{\times n}(\C))^\times|\to|S_\bullet S_\bullet^{\times n}(\C)^\times|.
    \end{equation*}
    By inspection, these maps fit into a commuting square
    \begin{equation*}
        \xymatrix{
          S_\bullet^{\times n}(\elTate(\C))^\times \ar[d] \ar[rr]^{\Index} && |S_\bullet S_\bullet^{\times n}(\C)^\times| \ar[d] \\
          \Omega|S_\bullet^{\times n+1}(\elTate(\C))^\times| \ar[rr]^{\Omega B\Index} && \Omega|S_\bullet S_\bullet^{\times
          n+1}(\C)^\times|   }
    \end{equation*}
    for each $n$.  For $n=0$, this square gives the triangle \eqref{connKtriang}. Taken together for all $n$, these squares show
    that \eqref{infiniteloop} is a map of infinite loop spaces.
\end{proof}

\subsection{The Index Map is an Equivalence}
Our goal in this section is to prove the following theorem.
\begin{theorem}\label{thm:deloop}
    Let $\C$ be an idempotent complete exact category. Then the $K$-theoretic index map is an equivalence.
\end{theorem}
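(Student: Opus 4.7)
My plan is to identify the index map, up to sign, with the equivalence obtained from Schlichting's Localization Theorem combined with an Eilenberg swindle that kills $K_{\Ind(\C)}$ and $K_{\Pro(\C)}$. First I establish $K_{\Ind(\C)}\simeq 0\simeq K_{\Pro(\C)}$. Since $\Ind(\C)$ is closed under countable admissible direct sums, for every $X\in\Ind(\C)$ the object $X^{\oplus\infty}:=\bigoplus_{n\geq 1}X$ satisfies $X\oplus X^{\oplus\infty}\cong X^{\oplus\infty}$. Applied to the split short exact sequence $X\into X\oplus X^{\oplus\infty}\onto X^{\oplus\infty}$, the Additivity Theorem \ref{thm:wald_add1} forces the identity on $K_{\Ind(\C)}$ to be nullhomotopic, whence $K_{\Ind(\C)}\simeq 0$. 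The dual argument using countable admissible products in $\Pro(\C)$ yields $K_{\Pro(\C)}\simeq 0$.

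Next, by Proposition \ref{prop:filtering}, both inclusions $\C\hookrightarrow\Ind(\C)$ and $\Pro(\C)\hookrightarrow\elTate(\C)$ are left s-filtering, and the inclusion of pairs induces an equivalence of quotient exact categories $\Ind(\C)/\C\simeq\elTate(\C)/\Pro(\C)$. Combining Schlichting's Localization Theorem \ref{thm:schlichting} for each inclusion with the vanishing from the previous step and the quotient equivalence produces a natural chain of equivalences
\begin{equation*}
    K_{\elTate(\C)}\xrightarrow{\simeq}K_{\elTate(\C)/\Pro(\C)}\xleftarrow{\simeq}K_{\Ind(\C)/\C}\xrightarrow[\simeq]{\partial}BK_{\C},
\end{equation*}
in which the first arrow is the projection induced by $\elTate(\C)\to\elTate(\C)/\Pro(\C)$, the second is the equivalence of quotients, and the third is the boundary of the localization sequence for $\C\subset\Ind(\C)$. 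The resulting composite gives an abstract equivalence $K_{\elTate(\C)}\simeq BK_{\C}$.

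It remains to identify this composite, up to the sign supplied by Corollary \ref{cor:Schlichting}, with the index map. Given an $n$-simplex $(L_0\into\cdots\into L_n\into V)$ of $\Gr^\le_\bullet(\C)$, Theorem \ref{thm:sato_filtered}(b) gives $L_i/L_0\in\C$ for all $i\geq 1$, while $V/L_0\in\Ind(\C)$ by the definition of a lattice. The assignment
\begin{equation*}
    (L_0\into\cdots\into L_n\into V)\longmapsto\bigl(L_1/L_0\into\cdots\into L_n/L_0;\; L_1/L_0\into\cdots\into L_n/L_0\into V/L_0\bigr)
\end{equation*}
then defines a simplicial functor $\Phi\colon\Gr^\le_\bullet(\C)\to S^\ell_\bullet(\C\subset\Ind(\C))$, the simplicial identities following from Noether's third isomorphism theorem. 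By construction, the composite $\delta\circ\Phi$ recovers the categorical index map under the equivalence $|\Gr^\le_\bullet(\C)^\times|\simeq\elTate(\C)^\times$ of Proposition \ref{prop:gr}. Corollary \ref{cor:Schlichting} identifies $\delta$ with $-1$ times the boundary $\partial$, and a direct verification shows that $\Phi$ intertwines the augmentation $|\Gr^\le_\bullet(\C)^\times|\to\elTate(\C)^\times$ with the projection $\elTate(\C)\to\elTate(\C)/\Pro(\C)$ under the equivalence $\Ind(\C)/\C\simeq\elTate(\C)/\Pro(\C)$. Putting these compatibilities together, the index map equals $-1$ times the composite equivalence of the previous paragraph, and is therefore itself an equivalence.

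The hard part will be the third step, which requires tracking the combinatorial data through the Waldhausen relative $S$-construction, verifying that the quotient equivalence $\Ind(\C)/\C\simeq\elTate(\C)/\Pro(\C)$ is compatible with the two natural maps out of $\Gr^\le_\bullet(\C)$ at the level of simplicial objects, and carefully handling the sign prescribed by Proposition \ref{prop:waldbound} and Corollary \ref{cor:Schlichting}. By contrast, the Eilenberg swindle and the direct invocation of Schlichting's theorem are comparatively routine once the foundational machinery of Section \ref{sub:k} is in hand.
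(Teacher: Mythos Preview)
Your proposal is correct but takes a genuinely different route from the paper's proof of this theorem. The paper proves Theorem~\ref{thm:deloop} \emph{directly}: it shows that for each fixed $n$ the map $\Gr_n^\le(\C)\to S_n(\C)$ induces an equivalence on $|S_\bullet(-)^\times|$, by identifying $\Gr_n^\le(\C)$ with the extension category $\E(\Pro(\C),\Gr_n^\le(\C),S^\ell_n(\C\subset\Ind(\C)))$ and then applying the Additivity Theorem (Theorem~\ref{thm:wald_add2}) together with Proposition~\ref{prop:Slprops} and the Eilenberg swindle to strip off the Pro and Ind factors. No localization theorem is invoked; the argument stays entirely within Waldhausen's additivity framework.

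Your approach instead establishes the abstract equivalence $K_{\elTate(\C)}\simeq BK_{\C}$ via Schlichting's Localization Theorem and the Eilenberg swindle---this is essentially Saito's delooping---and then identifies the index map with (minus) this composite by constructing the comparison map $\Phi$ into $S^\ell_\bullet(\C\subset\Ind(\C))$. This second step is precisely the content of the paper's later Theorem~\ref{thm:loc}, and your argument for it mirrors that proof. The paper explicitly acknowledges, in the remark following Theorem~\ref{thm:deloop} and in the corollary to Theorem~\ref{thm:loc}, that this alternative route exists and yields the same conclusion.

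What each approach buys: the paper's direct proof is self-contained (needing only additivity, not Schlichting's machinery) and gives a levelwise understanding of \emph{why} the index map is an equivalence. Your route reuses heavier infrastructure, but has the virtue of simultaneously proving the comparison with the boundary map, which the paper treats as a separate theorem. One minor point to watch: applying Theorem~\ref{thm:schlichting} to $\Pro(\C)\subset\elTate(\C)$ requires $\Pro(\C)$ to be idempotent complete, which does follow from the hypothesis on $\C$ but should be noted.
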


\begin{rmk}
    {In Section \ref{sec:indexboundary}, we show (independently of Theorem \ref{thm:deloop}) that the $K$-theoretic index map is equivalent to $-1$ times Saito's equivalence. In light of this result, the theorem above is a direct consequence Saito's delooping result \cite[Theorem 1.2]{Saito:2012uq}.}
    %The existence of this equivalence has been suspected for some time (e.g. \cite{MR2941474}). S. Saito \cite{Saito:2012uq} has previously established an abstract equivalence $\Omega K_{\elTate(\C)}\simeq K_{\C}$ as a consequence of Schlichting's Localization Theorem \cite{MR2079996}.
    { We deduce the present theorem directly from the definition of the index map, and Waldhausen's Additivity Theorem \cite{MR0802796}.}  In Section \ref{sec:aj}, we explain how the present theorem can be understood as the analogue for algebraic $K$-theory of the equivalence, due to Atiyah and J\"{a}nich, between the space of Fredholm operators on a separable complex Hilbert space and the classifying space of topological complex $K$-theory.
\end{rmk}

\begin{proof}[Proof of Theorem \ref{thm:deloop}]
    It suffices to prove that, for each $n$, the map
    \begin{equation*}
        \Gr_n^\le(\C)\to^{\Index} S_n(\C)
    \end{equation*}
    induces an equivalence
    \begin{equation*}
        |S_\bullet(\Gr_n^\le(\C))^\times|\to^\simeq |S_\bullet S_n(\C)^\times|.
    \end{equation*}
    For $n=0$, observe that $\Gr_0^\le(\C)=\E(\Pro(\C),\elTate(\C),\Ind(\C))$. Therefore, by the Additivity Theorem (Theorem
    \ref{thm:wald_add2}), we have an equivalence
    \begin{equation*}
        |S_\bullet(\Gr_0^\le(\C))^\times| \to^\simeq |S_\bullet(\Pro(\C))^\times|\times |S_\bullet(\Ind(\C))^\times|.
    \end{equation*}
    The Eilenberg swindle shows that the right hand side is contractible. We conclude that the map
    \begin{equation*}
        |S_\bullet(\Gr_0^\le(\C))^\times|\to \ast=|S_\bullet S_0(\C)^\times|
    \end{equation*}
    is an equivalence.

    Next, consider the functor
    \begin{equation*}
        \Gr_n^\le(\C)\to\E(\Pro(\C),\Gr_n^\le(\C),S^\ell_n(\C\subset\Ind(\C)))
    \end{equation*}
    which sends $(L_0\hookrightarrow\cdots\hookrightarrow L_n\hookrightarrow V)$ to
    \begin{equation*}
        \xymatrix{
             L_0 \ar@{^{(}->}[r] \ar@{^{(}->}[d] & L_0 \ar@{^{(}->}[r] \ar@{^{(}->}[d] & \cdots \ar@{^{(}->}[r] & L_0
             \ar@{^{(}->}[r] \ar@{^{(}->}[d] & L_0 \ar@{^{(}->}[d] \\
             L_0 \ar@{^{(}->}[r] \ar@{>>}[d] & L_1 \ar@{^{(}->}[r] \ar@{>>}[d] & \cdots \ar@{^{(}->}[r] & L_n \ar@{^{(}->}[r]
             \ar@{>>}[d] & V \ar@{>>}[d]\\
             0 \ar@{^{(}->}[r] & L_1/L_0 \ar@{^{(}->}[r] & \cdots \ar@{^{(}->}[r] & L_n/L_0 \ar@{^{(}->}[r] & V/L_0.
         }
    \end{equation*}
    A quick check shows that this is an equivalence of exact categories. By the Additivity Theorem (Theorem \ref{thm:wald_add1})
    and by Proposition \ref{prop:Slprops}, we conclude that the map
    \begin{align*}
        \Gr_n^\le(\C)&\to\Pro(\C)\times S_n(\C)\times\Ind(\C)\\
        (L_0\hookrightarrow\cdots\hookrightarrow L_n\hookrightarrow V)&\mapsto(L_0,L_1/L_0\hookrightarrow\cdots\hookrightarrow
        L_n/L_0,V/L_n)
    \end{align*}
    induces an equivalence
    \begin{equation*}
        |S_\bullet(\Gr_n^\le(\C))^\times|\to^\simeq |S_\bullet\Pro(\C)^\times| \times |S_\bullet(S_n(\C))^\times| \times
        |S_\bullet(\Ind(\C))^\times|.
    \end{equation*}
    By the Eilenberg swindle, the projection
    \begin{equation*}
        |S_\bullet\Pro(\C)^\times|\times |S_\bullet (S_n(\C))^\times| \times |S_\bullet(\Ind(\C))^\times|\to |S_\bullet
        (S_n(\C))^\times|
    \end{equation*}
    is an equivalence. The map
    \begin{equation*}
        |S_\bullet(\Gr_n^\le(\C))^\times|\to^{\Index} |S_\bullet (S_n(\C))^\times|
    \end{equation*}
    is the composite of the above two maps, and is therefore an equivalence.
\end{proof}

\begin{corollary}\label{cor:tatek=k'}
    Let $\C$ be an idempotent complete exact category, and let $\kappa\le\kappa'$ be a pair of infinite cardinals. Then the exact functor $\elTatek(\C)\to \elTatekp(\C)$ induces an equivalence in $K$-theory $K_{\elTatek(\C)}\to^\simeq K_{\elTatekp(\C)}$.
\end{corollary}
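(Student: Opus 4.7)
The plan is to deduce this directly from Theorem \ref{thm:deloop} by exhibiting a commuting triangle of index maps and applying 2-out-of-3. The key observation is that all of the constructions used to build the index map are natural in the cardinality bound.

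More concretely, I would first note that the inclusion $\elTatek(\C) \hookrightarrow \elTatekp(\C)$ is a fully exact functor, and that a lattice $L \hookrightarrow V$ of an object $V \in \elTatek(\C)$ is again a lattice when $V$ is viewed as an object of $\elTatekp(\C)$ (since $\Prok(\C) \subset \Prok[\kappa'](\C)$ and $\Indk(\C) \subset \Indk[\kappa'](\C)$, and the defining conditions on $L$ and $V/L$ are preserved). Consequently, the Sato complex $\Gr^\le_\bullet$ assembles into a commuting diagram
\begin{equation*}
\xymatrix{
\Gr^\le_\bullet(\C)^{\times,\kappa} \ar[r] \ar[d] \ar[rd]^{\Index} & \Gr^\le_\bullet(\C)^{\times,\kappa'} \ar[d] \ar[rrd]^{\Index} \\
\elTatek(\C)^\times \ar[r] & \elTatekp(\C)^\times && S_\bullet(\C),
}
\end{equation*}
where the left square consists of the augmentations from the Sato complexes and the two $\Index$ maps land in the same $S_\bullet(\C)$. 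Passing to $K$-theory using Corollary \ref{cor:enhanced}, this yields a commuting triangle
\begin{equation*}
\xymatrix@R=12pt{
K_{\elTatek(\C)} \ar[rr] \ar[rd]_{\Index} && K_{\elTatekp(\C)} \ar[ld]^{\Index} \\
& B K_{\C}.
}
\end{equation*}

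By Theorem \ref{thm:deloop}, both diagonal maps are equivalences (the theorem applies identically at both cardinalities, since its proof uses only the Additivity Theorem and the existence and cofinality of lattices, which hold uniformly in $\kappa$). The 2-out-of-3 property then implies that the horizontal map is an equivalence, proving the corollary.

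The main (and really only) point to verify is the commutativity of the triangle above, which amounts to checking that the Sato-complex model of the index map is strictly natural in the cardinality bound. This is straightforward from the definitions, since all the simplicial operators (formation of $L_i/L_0$, etc.) commute with the inclusion $\elTatek(\C) \hookrightarrow \elTatekp(\C)$; no homotopical bookkeeping beyond the naturality built into Corollary \ref{cor:enhanced} is needed.
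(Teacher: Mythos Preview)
Your proposal is correct and follows essentially the same approach as the paper: build the commuting diagram of Sato complexes and index maps at the two cardinalities, apply Theorem \ref{thm:deloop} to see that the index maps are equivalences at each level, and invoke 2-out-of-3. The paper's version is slightly terser (it writes the diagram at the level of simplicial exact categories and applies $|S_\bullet(-)^\times|$ directly rather than going through Corollary \ref{cor:enhanced}), but the argument is the same.
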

\begin{proof}
    We have a commuting diagram
    \begin{equation*}
        \xymatrix{
            \elTatek(\C) \ar[d] & \Gr_{\kappa,\bullet}^\le(\C) \ar[l] \ar[d] \ar[r]^{\Index} & S_\bullet(\C) \ar[d]^1 \\
            \elTatekp(\C) & \Gr_{\kappa',\bullet}^\le(\C) \ar[l] \ar[r]^{\Index} & S_\bullet(\C)
        }
    \end{equation*}
    After applying $|S_\bullet(-)^\times|$, all of the horizontal arrows become equivalences (by Proposition \ref{prop:gr} and Theorem \ref{thm:deloop}). The corollary now follows from the 2 of 3 property for equivalences.
\end{proof}

\subsection{The Index Map as a Boundary Map}\label{sec:indexboundary}
\begin{theorem}\label{thm:loc}
    Let $\C$ be an idempotent complete exact category. Let
    \begin{equation*}
        \partial\colon\Omega K_{\Ind(\C)/\C}\to K_{\C}
    \end{equation*}
    be the boundary map in the $K$-theory localization sequence associated to the left s-filtering embedding $\C\subset\Ind(\C)$. Then the $K$-theoretic index map fits into a canonical homotopy commuting triangle
    \begin{equation}\label{indexloc}
        \xymatrix{
            \Omega K_{\elTate(\C)} \ar[d]_q \ar[rr]^{\Omega^2 B\Index} && K_{\C}       \\
            \Omega K_{\Ind(\C)/\C} \ar[urr]_{-\partial}                     }.
    \end{equation}
    where the left vertical map is given by the functor $\elTate(\C)\to^q\Ind(\C)/\C$ which sends an elementary Tate
    object $V$ to $V/L$ for any choice of lattice $L\hookrightarrow V$.
\end{theorem}

The proof of Saito's Delooping Theorem \cite{Saito:2012uq} implies the following.
\begin{corollary}\label{cor:minuseins}
    Let $\C$ be idempotent complete. The index map is canonically equivalent to $-1$ times Saito's delooping
    \begin{equation*}
        \Omega K_{\elTate(\C)}\to^\simeq K_{\C}.
    \end{equation*}
\end{corollary}

\begin{proof}[Proof of Theorem \ref{thm:loc}]
    The assignment
    \begin{equation*}
        (L_0\hookrightarrow\cdots\hookrightarrow L_n\hookrightarrow V)\mapsto(L_1/L_0\hookrightarrow\cdots
        L_n/L_0;L_1/L_0\hookrightarrow\cdots\hookrightarrow L_n/L_0\hookrightarrow V/L_0)
    \end{equation*}
    determines a map of simplicial diagrams of categories
    \begin{equation*}
        \Gr_\bullet^\le(\C)\to S^\ell_\bullet(\C\subset\Ind(\C))
    \end{equation*}
    which fits into a 2-commuting triangle
    \begin{equation*}
        \xymatrix{
          \Gr_\bullet^\le(\C) \ar[d] \ar[rr]^{\Index} && S_\bullet(\C)        \\
          S^\ell_\bullet(\C\subset\Ind(\C)) \ar[urr]_{\delta}                     }.
    \end{equation*}
    We also have a 2-commuting square
    \begin{equation*}
        \xymatrix{
          \elTate(\C) \ar[d]  && \Gr_\bullet^\le(\C) \ar[ll] \ar[d] \\
          \Ind(\C)/\C  && S^\ell_\bullet(\C\subset\Ind(\C)) \ar[ll]   }
    \end{equation*}
    where the bottom horizontal map is the restriction to 1-simplices of the equivalence appearing in the proof of Proposition
    \ref{prop:Schlichting1}, and where the left vertical map is the functor described above.

    Applying the $S$-construction, we obtain a commuting diagram of spaces
    \begin{equation*}
        \xymatrix{
            |S_\bullet(\elTate(\C))^\times| \ar[d] && |S_\bullet(\Gr_\bullet^\le(\C))^\times| \ar[ll]_\simeq \ar[d]
            \ar[rr]^{B\Index} && |S_\bullet S_\bullet(\C)^\times|\\
            |S_\bullet(\Ind(\C)/\C)^\times| && |S_\bullet S^\ell_\bullet(\C\subset\Ind(\C))^\times| \ar[ll]^\simeq \ar[urr]
            }.
    \end{equation*}
    By Proposition \ref{prop:Schlichting}, after inverting the left-facing equivalences and taking the double loop spaces, we obtain a contractible space of homotopy commuting triangles of the form \eqref{indexloc}.
\end{proof}

\subsection{A Combinatorial Model of the Index Map}\label{sub:indexaut}
In this section, we introduce convenient simplicial models of $|\Gr_\bullet^\le(\C)^\times|$ and $|S_\bullet(\C)^\times|$ which allow us to construct the map
\begin{equation*}
    |\elTate(\C)^\times|\to |S_\bullet(\C)^\times|
\end{equation*}
as an explicit simplicial map from the nerve of $\elTate(\C)^\times$.

\begin{rmk}
    We pause for a moment to explain the data exhibited by such a map.
    \begin{enumerate}
        \item From the perspective which we adopt in this section, the combinatorial model of the index map can be understood as a universal computation of indices, symbols, and higher torsion invariants of automorphisms of elementary Tate objects. In order to define the computation, we require a sequence of auxiliary choices. Theorem \ref{thm:sato_filtered} ensures that the data required for these choices exist, while the framework of simplicial homotopy theory ensures that the end-result is independent of the choices.\footnote{Though we do not pursue this here, we also expect that, given two sequences of such auxiliary choices, one can directly construct a homotopy between the resulting simplicial maps by a sequence of similar choices.} We conclude this section with a sample computation which explains the name of the index map.
        \item From another perspective, this simplicial map encodes an $A_\infty$-map
            \begin{equation*}
                \Aut(V)\to K_{\C}
            \end{equation*}
            for all elementary Tate objects $V$. More precisely, recall (e.g. \cite[Chapter V]{GoerssJardine:09}) that there exists a model structure, essentially due to Kan, on the category $\sSetr$ of \emph{reduced} simplicial sets, i.e. simplicial sets having a unique vertex. There is also a model structure, essentially due to Moore, on the the category $\sGrp$ of simplicial groups, and a Quillen equivalence
            \begin{equation*}
                G \colon \sSetr\rightleftarrows \sGrp \colon \overline{W},
            \end{equation*}
            with $G\dashv \overline{W}$. At the level of the underlying $\infty$-categories, this equivalence corresponds to the adjoint equivalence $\Omega\dashv B_\bullet$ between the $\infty$-category of group-like $A_\infty$-spaces and the $\infty$-category of pointed connected spaces. Both the nerve $N_\bullet\Aut(V)$ of the group $\Aut(V)$ and the model we use of $BK_{\C}$ are reduced simplicial sets, so the construction falls within this classical framework.

            However, there exists a different, and, for many purposes, more natural approach to $E_1$-objects in an $\infty$-category, essentially due to Segal. In \cite{BGW:Segal}, we develop a formalism which allows us to produce an efficient Segal-style model for this $A_\infty$-map in the $\infty$-category of spaces.
        \item From a third perspective, which we develop in Section \ref{sub:torsor}, this simplicial map can be understood as specifying the data of a $K_{\C}$-torsor $\mathcal{T}\to\elTate(\C)^\times$, or as specifying, for each elementary Tate object $V$, a $K_{\C}$-torsor $\mathcal{T}|_V$ with a coherent action of $\Aut(V)$.
    \end{enumerate}
\end{rmk}

\begin{notation}
    Throughout this section, $I\subset[n]$ will denote a \emph{non-empty} sub-set.
\end{notation}
Given a groupoid $\mathcal{G}$ and a functor $F_\bullet\colon\mathcal{G}\to\sSet$, denote by $\int_{\mathcal{G}} F_\bullet$ the Grothendieck construction of $F_\bullet$. Explicitly, $\int_{\mathcal{G}} F_\bullet$ is the simplicial diagram of categories whose category of $n$-simplices is the usual Grothendieck construction of $F_n$.

Given an elementary Tate object $V$, denote by $\Gr^\le_\bullet(V)$ the nerve of the partially ordered set $\Gr(V)$. As we observed in the proof of Proposition \ref{prop:gr}, the assignment $V\mapsto \Gr^\le_\bullet(V)$ defines a functor
\begin{equation*}
    \Gr^\le_\bullet\colon\elTate(\C)^\times\to\sSet
\end{equation*}
with $\int_{\elTate(\C)^\times} \Gr^\le_\bullet=\Gr_\bullet^\le(\C)^\times$. Starting with this observation, we now introduce several constructions which allow us to define a simplicial model for the inverse of the equivalence $|\Gr_\bullet^\le(\C)^\times|\to^\simeq |\elTate(\C)^\times|$.

\subsubsection{Subdivision and Kan's \texorpdfstring{$\Ex$}{Ex}}
\begin{definition}
    The \emph{subdivision} of the linearly ordered set $[n]$, denoted $\sd([n])$, is the partially ordered set consisting of all non-empty sub-sets $I\subset[n]$, ordered by inclusion.
\end{definition}

By taking the nerve of $\sd([n])$, we obtain a functor
\begin{equation*}
    \Ddelta\to\sSet
\end{equation*}
The left Kan extension of this functor along the Yoneda embedding gives a functor
\begin{equation*}
    \sd\colon\sSet\to\sSet.
\end{equation*}

\begin{example}
    The simplicial set $\sd\Delta^1$ consists of two 1-simplices $x_0$ and $x_1$ glued at their ends
    \begin{equation*}
        \xymatrix{
            \bullet \ar[rr]^{x_0} && \bullet && \ar[ll]_{x_1} \bullet
        }
    \end{equation*}
\end{example}

\begin{definition}
    Let $X_\bullet$ be a simplicial set. Define $\Ex(X)_\bullet$ to be the simplicial set whose $n$-simplices are given by
    \begin{equation*}
        \Ex(X)_n:=\hom_{\sSet}(\sd\Delta^n,X_\bullet).
    \end{equation*}
\end{definition}

\begin{example}
    The example above shows that a 1-simplex of $\Ex(X)$ consists of two 1-simplices $x_0$ and $x_1$ of $X$ glued at their ends.
\end{example}

The assignment $I\mapsto\max(I)$ defines a natural map of partially ordered sets
\begin{equation*}
    \sd([n])\to [n].
\end{equation*}
This extends to a natural transformation $\sd(-)\to 1_{(-)}$, which, in turn, defines a natural transformation $1_{(-)}\to \Ex(-)$. The following is one of the foundational results of simplicial homotopy theory.

\begin{lemma}[Kan](\cite{MR0090047}, cf. \cite[Theorem III.4.6]{GoerssJardine:09})\label{lemma:exequiv}
    Let $X_\bullet$ be a simplicial set. The map $X_\bullet\to\Ex(X)_\bullet$ is a weak equivalence.
\end{lemma}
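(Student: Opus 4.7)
The plan is to deduce this classical fact via the standard barycentric-subdivision argument. Because $\sd$ is defined as a left Kan extension from $\Ddelta$, it is cocontinuous, and the usual Yoneda computation exhibits an adjunction $\sd \dashv \Ex$ between endofunctors of $\sSet$. Under this adjunction, the natural transformation $X \to \Ex(X)$ appearing in the lemma is precisely the transpose of the ``last vertex'' map $p_X \colon \sd(X) \to X$ induced by the poset surjection $\max \colon \sd([n]) \to [n]$. The strategy will be to first show that $|p_X|$ is a weak equivalence, and then to transfer this conclusion through the adjunction to obtain the claim about $X \to \Ex(X)$.

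For the geometric step, I would begin with $X = \Delta^n$, where $|\sd \Delta^n|$ is the usual barycentric subdivision of the topological simplex $|\Delta^n|$ and $|p_{\Delta^n}|$ is the classical simplicial approximation to the identity, which is a homotopy equivalence (with an explicit inverse coming from a straight-line homotopy). Because both $\sd$ and geometric realization preserve colimits, a cellular induction on the skeleta of $X$---using the gluing lemma for weak equivalences along cofibrations---propagates this into a natural weak equivalence $|\sd X| \simeq |X|$ for every simplicial set $X$. Along the way one verifies that $\sd$ preserves monomorphisms; combined with the weak-equivalence statement, this makes $\sd \dashv \Ex$ a Quillen self-adjunction on $\sSet$ in its Kan--Quillen model structure, with left adjoint preserving all weak equivalences.

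The last step---transferring ``$p_X$ is a weak equivalence'' into ``$X \to \Ex(X)$ is a weak equivalence''---is where I expect the real difficulty, because being a weak equivalence does not pass through an adjunction formally. I would follow Kan's original route and iterate: form the telescope $X_\infty := \colim(X \to \Ex(X) \to \Ex\Ex(X) \to \cdots)$ and prove separately that (i) $X_\infty$ is a Kan complex, and (ii) the canonical map $X \to X_\infty$ is a weak equivalence. Assertion (i) reduces, via the adjunction, to the combinatorial fact that after sufficiently many iterated subdivisions any horn $\Lambda^n_k$ admits a filler in any simplicial set; this subdivided-horn-filling argument is where the real work concentrates. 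Assertion (ii) follows by feeding the weak equivalence $|\sd X| \simeq |X|$ from the previous paragraph through the adjunction at each finite stage and then using that a filtered colimit of weak equivalences between cofibrant simplicial sets is itself a weak equivalence. Once (i) and (ii) are in hand, two-out-of-three applied to $X \to \Ex(X) \to X_\infty$ yields the lemma.
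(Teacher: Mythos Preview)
The paper does not give its own proof of this lemma; it simply records it as a foundational fact with references to Kan's original paper and to Goerss--Jardine. So there is nothing to compare against on the paper's side, and the question reduces to whether your sketch stands on its own.

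Your outline is the right shape, but step~(ii) has a genuine gap. You correctly flag that ``$p_X$ is a weak equivalence'' does not formally imply ``its adjoint transpose $X\to\Ex(X)$ is a weak equivalence,'' and then in the very next sentence you invoke exactly that implication: you say (ii) follows ``by feeding the weak equivalence $|\sd X|\simeq|X|$ through the adjunction at each finite stage.'' That is circular. Knowing that $(\sd,\Ex)$ is a Quillen pair with $\sd$ homotopically fully faithful tells you the \emph{derived} unit is an equivalence, i.e.\ $X\to\Ex(Y)$ is a weak equivalence when $Y$ is a fibrant replacement of $X$; it says nothing about $X\to\Ex(X)$ for non-fibrant $X$, which is precisely the case at hand.

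The standard arguments close this gap by a direct computation rather than by formal adjunction nonsense. One route (essentially Kan's) is to show that $X\to\Ex(X)$ induces a bijection on $\pi_0$ and, for every choice of basepoint and every Kan complex $Z$, a bijection on simplicial homotopy classes $[\Ex(X),Z]\to[X,Z]$; this uses an explicit simplicial homotopy relating the two composites $\sd^2\Rightarrow 1$. Another route shows that $X\to\Ex(X)$ is a monomorphism inducing an isomorphism on integral homology, and then invokes the appropriate Whitehead-type theorem. Either way, the content is a concrete combinatorial verification, not a transfer through the adjunction. Once that is done for a single $\Ex$, your telescope argument for $\Ex^\infty$ and the two-out-of-three step are fine (and in fact unnecessary for the lemma as stated).
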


When $X_\bullet$ is the nerve of a poset $P$, $\Ex(X)_\bullet$ admits a particularly simple description.
\begin{example}\label{example:exofposet}
    Let $P$ be a partially ordered set. Denote by $P_\bullet$ its nerve. Then
    \begin{equation*}
        \Ex(P)_n\cong\{(x_I)_{I\subset [n]}~|~x_I\in P \text{ for all $I$, and } x_I<x_J \text{ for }I\subset J\}.
    \end{equation*}
    The face $i^{th}$ face of a simplex $(x_I)_{I\subset [n]}$ is given by the functor
    \begin{equation*}
        J\mapsto x_{d^i(J)}
    \end{equation*}
    Conversely, the $i^{th}$ degeneracy of a simplex $(x_I)_{I\subset [n]}$ is given by the functor
    \begin{equation*}
        J\mapsto x_{s^i(J)}.
    \end{equation*}
\end{example}

Under the inclusion $\Set\into\Cat$, we obtain a functor
\begin{equation*}
    \sd^h\colon\sSet\to^{\sd}\sSet\into\sCat.
\end{equation*}

\begin{definition}
    Let $X_\bullet\colon\Ddelta^{\op}\to\Cat$ be a simplicial category. Define $\Exh(X)_\bullet$ to be the simplicial category with
    \begin{equation*}
        \Exh(X)_n:=\Fun_{\sCat}(\sd^h\Delta^n,X_\bullet).
    \end{equation*}
\end{definition}

From the definition, we have $\ob\Exh(X)_\bullet\cong \Ex(\ob X_\bullet)$. Morphisms in $\Exh(X)_\bullet$ are morphisms of diagrams. This forms the basis of the following lemma.
\begin{lemma}\label{lemma:exofGroth}
    Let $\mathcal{G}$ be a groupoid, and let $F_\bullet\colon \mathcal{G}\to\sSet$ be a functor taking values in simplicial sets. Then
    \begin{equation*}
        \Exh\int_{\mathcal{G}} F_\bullet=\int_{\mathcal{G}} \Ex F_\bullet.
    \end{equation*}
\end{lemma}
\begin{proof}
    From the definition, objects of the category $\Exh\int_{\mathcal{G}} F_n$ are pairs $(V,L)$, where $V\in\mathcal{G}$ and $L\in \Ex(F(V))_n$. A morphism $(V_0,L_0)\to^g (V_1,L_1)$ in $\Exh\int_{\mathcal{G}} F_n$ consists of a morphism $V_0\to^g V_1$ in $\mathcal{G}$ such that $F(g)(L_0)=L_1$. We see that $\Exh\int_{\mathcal{G}}F_n$ is, by definition, the category $\int_{\mathcal{G}} \Ex(F)_n$. A similar exercise shows the equality of face and degeneracy maps.
\end{proof}

\begin{lemma}\label{lemma:exhequiv}
    Let $X_\bullet$ be a simplicial diagram of categories. Then the map
    \begin{equation*}
        X_\bullet\to \Exh X_\bullet
    \end{equation*}
    induces a weak equivalence
    \begin{equation*}
        |X|\to^\simeq |\Exh X|.
    \end{equation*}
\end{lemma}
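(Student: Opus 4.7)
The plan is to reduce the statement to Kan's theorem (Lemma \ref{lemma:exequiv}) by a bisimplicial argument. For a simplicial diagram of categories $X_\bullet$, write $Y_{m,n} := N_m X_n$ for the bisimplicial set obtained by taking nerves in the category direction; then $|X|\simeq |dY|$, where $dY$ denotes the diagonal. The map $X_\bullet \to \Exh X_\bullet$ induces a map of bisimplicial sets $N_m X_n \to N_m\Exh(X)_n$, and by the realization (Bousfield--Friedlander) lemma for bisimplicial sets, it is enough to verify that for each fixed $m$, the induced map of simplicial sets in the $n$-variable is a weak equivalence.

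The key step is the identification, for each fixed $m$, of simplicial sets
\begin{equation*}
    N_m \Exh(X)_\bullet \;\cong\; \Ex(N_m X_\bullet).
\end{equation*}
To see this, unwind the definitions: an $m$-simplex of $N\Exh(X)_n$ is a functor $[m]\to \Fun_{\sCat}(\sd^h\Delta^n, X_\bullet)$, which by tensor--hom adjunction is the same as a morphism of simplicial categories $[m]\times \sd^h\Delta^n \to X_\bullet$. Because $\sd^h\Delta^n$ is level-wise discrete (coming from the inclusion $\sSet \into \sCat$), such a morphism is no more than a morphism of simplicial sets $\sd\Delta^n \to N_m X_\bullet$, where $N_m X_\bullet$ denotes the simplicial set of $m$-composable morphisms in $X$. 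This is precisely an $n$-simplex of $\Ex(N_m X_\bullet)$. Under this identification, the canonical map $N_m X_n \to N_m\Exh(X)_n$ corresponds to Kan's comparison map $N_m X_\bullet \to \Ex(N_m X_\bullet)$.

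By Lemma \ref{lemma:exequiv}, Kan's map is a weak equivalence for every $m$. Thus the map of bisimplicial sets $N_m X_n \to N_m\Exh(X)_n$ is a degreewise weak equivalence in the $n$-direction, and the realization lemma then yields a weak equivalence on diagonals
\begin{equation*}
    |X| \simeq |dY| \xrightarrow{\;\simeq\;} |d\widetilde{Y}| \simeq |\Exh X|,
\end{equation*}
proving the lemma.

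The main obstacle is the careful bookkeeping in the identification $N_m\Exh(X)_\bullet \cong \Ex(N_m X_\bullet)$. The point is that although $\Exh(X)_n$ is in general a non-discrete category, the discreteness of $\sd^h\Delta^n$ at every simplicial level forces every datum (objects and morphisms alike) of an $m$-simplex of $N\Exh(X)_n$ to be transported along a morphism of simplicial sets out of $\sd\Delta^n$, so that the $m$-direction is absorbed into the target simplicial set $N_m X_\bullet$ with no further correction.
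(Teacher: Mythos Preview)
Your proof is correct and follows essentially the same approach as the paper's: form the bisimplicial set by taking nerves, identify $N_m\Exh(X)_\bullet \cong \Ex(N_m X_\bullet)$ for each fixed nerve-degree $m$, apply Kan's Lemma~\ref{lemma:exequiv} levelwise, and conclude via the realization lemma. The paper compresses the identification step into the phrase ``unpacking the definition,'' whereas you spell out the tensor--hom adjunction and the role of the levelwise discreteness of $\sd^h\Delta^n$; this extra care is sound and arguably clarifies the point, but the two arguments are the same in substance.
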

\begin{proof}
    Given a simplicial diagram of categories $X_\bullet$, let $X_{\bullet,\bullet}$ be the bisimplicial set obtained by taking the nerve (in the vertical direction) of the categories $X_n$. Unpacking the definition, we see that $\Exh X_{\bullet,\bullet}$ is the bisimplicial set with
    \begin{equation*}
        (\Exh X)_{\bullet,n}=\Ex X_{\bullet,n}.
    \end{equation*}
    Similarly, the map
    \begin{equation*}
        X_{\bullet,\bullet}\to \Exh X_{\bullet,\bullet}
    \end{equation*}
    is the map of  bisimplicial sets given on horizontal $n$-simplices by
    \begin{equation*}
        X_{\bullet,n}\to \Ex X_{\bullet,n}.
    \end{equation*}
    By Lemma \ref{lemma:exequiv}, this is a weak equivalence for all $n$.
\end{proof}

\subsubsection{The Diagonal of the Grothendieck Construction}
Let $\mathcal{G}$ be a groupoid, and let $F_\bullet\colon \mathcal{G}\to\sSet$ be a functor taking values in simplicial sets. Taking the nerves of the groupoids $\int_{\mathcal{G}} F_\bullet$ (in the vertical direction), we obtain a bisimplicial set $\int_{\mathcal{G}}F_{\bullet,\bullet}$.

Concretely, $(n,m)$-simplices of $\int_{\mathcal{G}}F_{\bullet,\bullet}$ consist of a string of isomorphisms in $\mathcal{G}$
\begin{equation*}
    x_0\to^{g_1}\cdots\to^{g_m} x_m
\end{equation*}
along with elements $y_i\in F_n(x_i)$ such that $F(g_i)(y_i)=y_{i+1}$. Because all the $g_i$ are isomorphisms, we see that $(n,m)$-simplices of $\int_{\mathcal{G}}F_{\bullet,\bullet}$ are equivalent to tuples
\begin{equation*}
    (x_0\to^{g_1}\cdots\to^{g_m}x_m,y)\in N_m\mathcal{G}\times F_n(x_m).
\end{equation*}
Applying the diagonal functor, we see that $n$-simplices of $d(\int_{\mathcal{G}}F)_\bullet$ consist of tuples
\begin{equation*}
    (x_0\to^{g_1}\cdots\to^{g_n} x_n,y)\in N_n\mathcal{G}\times F_n(x_n).
\end{equation*}
The degeneracy $s_i$ is the product of the $i^{th}$ degeneracy maps in $F_\bullet(x_n)$ and $N_\bullet\mathcal{G}$. For $i<n$, the face map $d_i$ is given by the product of the $i^{th}$ face maps in $F_\bullet(x_n)$ and $N_\bullet\mathcal{G}$, while the face map $d_n$ is given by
\begin{equation*}
    d_n(x_0\to^{g_1}\cdots\to^{g_n} x_n,y):=(x_0\to^{g_1}\cdots\to^{g_{n-1}} x_{n-1},F(g_n)^{-1}(y)).
\end{equation*}

In the case $\Exh\Gr_\bullet^\le(\C)^\times=\int_{\elTate(\C)^\times} \Ex\Gr^\le_\bullet$, the previous description combines with Lemma \ref{lemma:exofGroth} and Example \ref{example:exofposet}, to give the following.
\begin{lemma}\label{lemma:dgr}
    Let $\C$ be an exact category. Then
    \begin{equation*}
        |\Gr^\le_\bullet(\C)^\times|\simeq|d(\Exh \Gr_\bullet^\le(\C)^\times)|.
    \end{equation*}
    Further, the $n$-simplices of the simplicial set $d(\Exh\Gr^\le_\bullet(\C)^\times)_\bullet$ consist of tuples
    \begin{equation*}
        (V_0\to^{g_1}\cdots\to^{g_n}V_n,\{L_I\}_{I\subset[n]})\in N_n\elTate(\C)^\times\times \Ex\Gr^\le_n(V_n).
    \end{equation*}
    In this description, the degeneracy $s_i$ is the product of the $i^{th}$ degeneracy maps in $N_\bullet\elTate(\C)^\times$ and $\Ex\Gr^\le_\bullet(V_n)$. For $i<n$, the face map $d_i$ is given by the product of the $i^{th}$ face maps in $N_\bullet\elTate(\C)^\times$ and $\Ex\Gr^\le_\bullet(V_n)$, while the face map $d_n$ is given by
    \begin{equation*}
        d_n(V_0\to^{g_1}\cdots\to^{g_n} V_n,\{L_I\}_{I\subset[n]}):=(V_0\to^{g_1}\cdots\to^{g_{n-1}} V_{n-1},\{g_n^{-1}L_{d^n(J)}\}_{J\subset[n-1]}).
    \end{equation*}
\end{lemma}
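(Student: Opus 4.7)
The plan is to deduce this lemma directly from the constructions assembled immediately above its statement, namely Lemma \ref{lemma:exhequiv}, Lemma \ref{lemma:exofGroth}, Example \ref{example:exofposet}, and the general description of the diagonal of the Grothendieck construction spelled out in the paragraph preceding the lemma.

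For the equivalence $|\Gr^\le_\bullet(\C)^\times|\simeq|d(\Exh\Gr^\le_\bullet(\C)^\times)|$, I would chain two standard ingredients. First, viewing $\Gr^\le_\bullet(\C)^\times$ as a simplicial diagram of groupoids and taking nerves in the vertical direction produces a bisimplicial set whose geometric realization agrees (canonically) with its diagonal. Second, Lemma \ref{lemma:exhequiv} gives a level-wise weak equivalence from this bisimplicial set to $\Exh\Gr^\le_\bullet(\C)^\times_{\bullet,\bullet}$. Combining the two, I obtain the asserted equivalence.

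For the explicit description of simplices, I would first apply Lemma \ref{lemma:exofGroth} with $\mathcal{G}=\elTate(\C)^\times$ and $F_\bullet=\Gr^\le_\bullet$, yielding
\[
\Exh\Gr^\le_\bullet(\C)^\times=\int_{\elTate(\C)^\times}\Ex\Gr^\le_\bullet.
\]
I then substitute into the general description of the diagonal of the Grothendieck construction (recalled just before the lemma), which identifies $n$-simplices with tuples $(x_0\to^{g_1}\cdots\to^{g_n}x_n,\,y)$ where $y\in F_n(x_n)$, with the stated formulas for the face and degeneracy maps. Finally, Example \ref{example:exofposet} applied to the partially ordered set $\Gr(V_n)$ identifies an element of $\Ex\Gr^\le_n(V_n)$ with a coherent family $\{L_I\}_{I\subset[n]}$ of lattices indexed by non-empty subsets $I\subset[n]$, ordered by inclusion. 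This produces the claimed description of $n$-simplices.

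The face and degeneracy maps for $i<n$ are, by definition of the diagonal, simply the products of the $i^{\text{th}}$ face/degeneracy maps on the two factors $N_n\elTate(\C)^\times$ and $\Ex\Gr^\le_\bullet(V_n)$. The one non-trivial map is $d_n$, since it is the only face map that changes the base point from $V_n$ to $V_{n-1}$. Here the general formula reads $d_n(x_0\to\cdots\to x_n,\,y)=(x_0\to\cdots\to x_{n-1},\,F(g_n)^{-1}(y))$, and the functor $F=\Ex\Gr^\le_\bullet$ takes the isomorphism $g_n\colon V_{n-1}\to^\simeq V_n$ to the pullback map on lattices $L\mapsto g_n^{-1}L$, which is applied term-by-term to the family $\{L_I\}$, producing the family $\{g_n^{-1}L_{d^n(J)}\}_{J\subset[n-1]}$ after reindexing along the cofacet $d^n\colon[n-1]\hookrightarrow[n]$.

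I expect no genuine obstacle: the lemma is an unpacking of definitions combined with previously established equivalences. The only delicate point is bookkeeping of indices under the subdivision functor $\sd$, in particular ensuring that the reindexing by $d^n$ in the formula for $d_n$ is correctly inherited from the simplicial structure on $\Ex(-)$ described in Example \ref{example:exofposet}; this is purely combinatorial.
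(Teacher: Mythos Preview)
Your proposal is correct and follows exactly the approach the paper intends: the lemma is stated in the paper as an immediate consequence of Lemma~\ref{lemma:exofGroth}, Example~\ref{example:exofposet}, and the explicit description of the diagonal of the Grothendieck construction given just before it, with the first equivalence supplied by Lemma~\ref{lemma:exhequiv} together with $|X_{\bullet,\bullet}|\simeq|d(X)_\bullet|$. Your write-up is, if anything, more explicit than the paper's one-line justification.
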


\subsubsection{A Combinatorial Model of the Index Map}
\begin{theorem}\label{thm:grsec}
    Let $\C$ be an idempotent complete exact category. A section of the map
    \begin{equation}\label{exgraug}
        d(\Exh\Gr^\le_\bullet(\C)^\times)_\bullet\to N_\bullet\elTate(\C)^\times.
    \end{equation}
    is constructed according to the following induction.
    \begin{enumerate}
        \item For each $V\in\elTate(\C)^\times$, choose a lattice $L_{0,[0]}\into V$.
        \item For the inductive step, suppose that for each $k<n$, for each non-degenerate simplex
            \begin{equation*}
                \overline{g}=(V_0\to^{g_1}\cdots\to^{g_k} V_k)\in N_k\elTate(\C)^\times,
            \end{equation*}
            and for each non-empty subset $I\subset [k]$, we have specified a collection of lattices $L_{k,I}(\overline{g})\into V_k$ satisfying:
                \begin{enumerate}
                    \item if $I\subset d^i([k-1])\subset[k]$ for $i<k$, then
                        \begin{equation*}
                            L_{k,I}(\overline{g})=L_{k-1,s^i(I)}(d_i\overline{g}),
                        \end{equation*}
                    \item if $I\subset d^k([k-1])\subset[k]$, then
                        \begin{equation*}
                            L_{k,I}(\overline{g})=g_k L_{k-1,s^{k-1}(I)}(d_k\overline{g}),
                        \end{equation*}
                    \item and, for all $I\subset J\subset [k]$, $L_{k,I}(\overline{g})$ is a sub-lattice of $L_{k,J}(\overline{g})$.
                \end{enumerate}
                Then, let
                \begin{equation*}
                    \overline{g}=(V_0\to^{g_1}\cdots\to^{g_n}V_n)\in N_n\elTate(\C)^\times
                \end{equation*}
                be a non-degenerate simplex, let $I\subset[n]$ be a proper, non-empty subset, and let $i$ be any number such that $I\subset d^i([n-1])\subset [n]$. Define
                \begin{equation*}
                    L_{n,I}(\overline{g}):=\left\{
                        \begin{array}{lr}
                            L_{n-1,s^i(I)}(d_i\overline{g}) & \text{ if } i<n\\
                            \\
                            g_n L_{n-1,s^{n-1}(I)}(d_n\overline{g}) & \text{ if } i=n.
                        \end{array}\right.
                \end{equation*}
                Then $L_{n,I}(\overline{g})$ is a well-defined lattice of $V_n$, independent of the choice of $i$, and we complete the inductive step by choosing a lattice  $L_{n,[n]}(\overline{g})\into V_n$ which contains $L_{n,I}(\overline{g})$ as a sub-lattice for all proper, non-empty subsets $I\subset[n]$.
    \end{enumerate}
    The induction constructs a collection of lattices $\{L_{n,I}(\overline{g})\into V_n\}_{I\subset[n]}$ for every $n$ and every non-degenerate simplex $\overline{g}\in N_n\elTate(\C)^\times$. These families are such that the assignment
    \begin{equation}\label{explicitindexmap}
        \overline{g}\mapsto(\overline{g},\{L_{n,I}(\overline{g})\into V_n\}_{I\subset[n]})
    \end{equation}
    defines a right inverse
    \begin{equation*}
        N_\bullet\elTate(\C)^\times\to^{\mathcal{L}} d(\Exh\Gr_\bullet^\le(\C)^\times)_\bullet
    \end{equation*}
    to the map \eqref{exgraug}.
\end{theorem}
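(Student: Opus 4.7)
The plan is to verify, in order, (i) that the formula defining $L_{n,I}(\overline{g})$ for a proper non-empty $I\subset[n]$ is independent of the choice of $i$ with $I\subset d^i([n-1])$; (ii) that the inductive hypotheses (a)--(c) continue to hold at stage $n$, so that the induction is legal; (iii) that the resulting assignment extends uniquely to a simplicial map which is a section of the augmentation. We use Theorem \ref{thm:sato_filtered} throughout, in particular assertion (c) which provides the directedness needed to produce the enclosing lattice $L_{n,[n]}(\overline{g})$ at each inductive step.

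For (i), suppose $I\subset d^i([n-1])\cap d^j([n-1])$ with $i<j\le n$. We split into three cases. If $j<n$, apply $d_i$ and $d_j$ to $\overline{g}$ and use the cosimplicial identity $d^i d^j = d^j d^{i-1}$ on index sets, together with the identification $s^{j-1}d^i = d^i s^j$ for $j>i$, to rewrite both $L_{n-1,s^i(I)}(d_i\overline{g})$ and $L_{n-1,s^j(I)}(d_j\overline{g})$ as an iterated boundary of an $(n-2)$-simplex value, where the agreement follows from hypothesis (a) at stage $n-1$. If $i=n-1, j=n$ (or more generally $j=n$ with $i<n$), the discrepancy $g_n$ in clause (b) versus its absence in clause (a) is compensated, via the inductive hypothesis (b), when we factor both lattices through the common lower simplex $d_n d_i \overline{g} = d_i d_n \overline{g}$; tracking where $g_n$ is applied shows the two definitions yield the same sub-lattice of $V_n$.

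For (ii), clauses (a) and (b) at stage $n$ are tautologically built into the formula for proper $I$, and extend to $I=[n]$ vacuously since $[n]\not\subset d^i([n-1])$ for any $i$. For clause (c), given $I\subset J\subset[n]$ both proper, choose $i$ with $J\subset d^i([n-1])$; then $I\subset d^i([n-1])$ too, and the containment $L_{n,I}(\overline{g})\subset L_{n,J}(\overline{g})$ reduces to the corresponding containment at stage $n-1$. When $J=[n]$, the lattice $L_{n,[n]}(\overline{g})$ is chosen to dominate all $L_{n,I}(\overline{g})$ for proper $I$; such a lattice exists because $\Gr(V_n)$ is directed by Theorem \ref{thm:sato_filtered}(c) and the index set of proper non-empty $I\subset[n]$ is finite.

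For (iii), extend $\mathcal{L}$ to degenerate simplices by declaring $\mathcal{L}(s_j\overline{h}) := s_j\mathcal{L}(\overline{h})$; this is well-defined because every simplex in the nerve of a groupoid has a unique presentation as an iterated degeneracy of a non-degenerate simplex, and the simplicial identities among the $s_j$ are compatible with this rule. Compatibility of $\mathcal{L}$ with $d_i$ for $i<n$ on non-degenerate simplices is precisely clause (a); compatibility with $d_n$ is clause (b), in view of the explicit description of $d_n$ on $d(\Exh\Gr^\le_\bullet(\C)^\times)$ from Lemma \ref{lemma:dgr}, which twists by $g_n^{-1}$. Compatibility with $s_j$ holds by construction. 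That the composition with the projection to $N_\bullet\elTate(\C)^\times$ is the identity is immediate from the formula \eqref{explicitindexmap}.

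The main obstacle is step (i): the coherent bookkeeping of how the cosimplicial identities interact with the twist by $g_n$ in clause (b). Beyond this, the construction is forced by the simplicial structure, and existence of the choices at each stage is guaranteed by the directedness of the Sato Grassmannian.
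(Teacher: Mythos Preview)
Your proposal is correct and follows the same approach as the paper's proof, organizing the argument into the same three claims (well-definedness of $L_{n,I}(\overline{g})$ for proper $I$, preservation of the inductive hypotheses, and simpliciality of the resulting section). Your sketch of step (i) elides the paper's explicit three-case analysis---in particular the subcase $i=n-1$, $j=n$, which hinges on the observation $s^{n-2}(I)=s^{n-1}(I)$ for $I\subset[n]\setminus\{n-1,n\}$---and a couple of the simplicial identities you cite have minor index slips (e.g.\ one wants $d_i d_n = d_{n-1} d_i$ and $s^i s^j = s^{j-1} s^i$ for $i<j$), but the strategy and content are the same.
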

\begin{rmk}
    The theorem is properly understood as a construction in the setting of groups acting on directed posets.  To wit, for any directed poset $P$ and any group $G$ acting on $P$, the induction of the theorem extends, mutatis mutandis, to construct a section of the map $\Ex(P)//G\to N_\bullet G$.
\end{rmk}
\begin{proof}
    For ease of notation, we will leave the elementary Tate object $V_n$ implicit in the course of the proof, e.g. we write \eqref{explicitindexmap} as
    \begin{equation*}
        \overline{g}\mapsto(\overline{g},\{L_{n,I}(\overline{g})\}_{I\subset[n]}).
    \end{equation*}
    For the proof, we need to establish the following claims:
    \begin{enumerate}
        \item That for a proper, non-empty sub-set $I\subset[n]$, $L_{n,I}(\overline{g})$ is well-defined.
        \item That we can make the choices of $L_{n,[n]}(\overline{g})$ required for the induction.
        \item That, for each non-degenerate $n$-simplex $\overline{g}\in N_n\elTate(\C)^\times$, the collection $\{L_{n,I}(\overline{g})\}_{I\subset[n]}$ of the inductive step satisfies the inductive hypothesis for the next stage.
        \item That the assignment \eqref{explicitindexmap} defines a simplicial map.
    \end{enumerate}
    For the first claim, suppose that $i<j<n$ are in $[n]\setminus I$ (i.e. $I\subset d^i([n-1])\cap d^j([n-1])\subset[n]$). We claim that
    \begin{equation*}
        L_{n-1,s^i(I)}(d_i\overline{g})=L_{n-1,s^j(I)}(d_j\overline{g}).
    \end{equation*}
    By assumption, $s^j(I)\subset d^i([n-2])$, while $s^i(I)\subset d^{j-1}([n-2])$. Therefore, by the inductive hypothesis and by the simplicial identities, we have
    \begin{align*}
        L_{n-1,s^j(I)}(d_j\overline{g})&=L_{n-2,s^is^j(I)}(d_id_j\overline{g})\\
        &=L_{n-2,s^{j-1}s^i(I)}(d_{j-1}d_i\overline{g})\\
        &=L_{n-1,s^i(I)}(d_i\overline{g})
    \end{align*}
    as asserted. It remains to show the case where $j=n$. For this, we first suppose that $i<n-1$. We claim that
    \begin{equation*}
        L_{n-1,s^i(I)}(d_i\overline{g})=g_nL_{n-1,s^{n-1}(I)}(d_n\overline{g}).
    \end{equation*}
    By assumption, $s^{n-1}(I)\subset d^i([n-2])$, while $s^i(I)\subset d^{n-1}([n-2])$. Therefore, by the inductive hypothesis and by the simplicial identities, we have
    \begin{align*}
        g_n L_{n-1,s^{n-1}(I)}(d_n\overline{g})&=g_n L_{n-2,s^is^{n-1}(I)}(d_id_n\overline{g})\\
        &=g_n L_{n-2,s^{n-2}s^i(I)}(d_{n-1}d_i\overline{g})\\
        &=L_{n-1,s^i(I)}(d_i\overline{g})
    \end{align*}
    as asserted.

    Finally, suppose that $i=n-1$. Note that, because the maps $s^{n-2}$ and $s^{n-1}$ are equal on $[n]\setminus\{n-1,n\}$, our assumption on $I$ implies that $s^{n-2}(I)=s^{n-1}(I)\subset [n-1]$. Then we claim that
    \begin{equation*}
        L_{n-1,s^{n-1}(I)}(d_{n-1}\overline{g})=g_nL_{n-1,s^{n-1}(I)}(d_n\overline{g}).
    \end{equation*}
    By assumption, $s^{n-1}(I)$ is contained in $d^{n-1}([n-2])\subset[n-1]$. Therefore, by the inductive hypothesis, by the simplicial identities, and by the equality $s^{n-2}(I)=s^{n-1}(I)$, we have
    \begin{align*}
        g_n L_{n-1,s^{n-1}(I)}(d_n\overline{g})&=g_{n-1}g_n L_{n-2,s^{n-2}s^{n-1}(I)}(d_{n-1}d_n\overline{g})\\
        &=g_{n-1}g_n L_{n-2,s^{n-2}s^{n-2}(I)}(d_{n-1}d_{n-1}\overline{g})\\
        &=L_{n-1,s^{n-2}(I)}(d_{n-1}\overline{g})\\
        &=L_{n-1,s^{n-1}(I)}(d_{n-1}\overline{g}).
    \end{align*}
    We have established the first claim.

    The second claim follows, by Theorem \ref{thm:sato_filtered}, from our assumption that $\C$ is idempotent complete. Indeed, for every elementary Tate object $V$, a lattice $L_{0,[0]}\into V$ exists. For the induction step, the lattices $L_{n,I}(\overline{g})\into V_n$ are given, and it remains to choose a lattice $L_{n,[n]}(\overline{g})\into V_n$ containing all of these as proper sub-lattices. Such a lattice exists by Theorem \ref{thm:sato_filtered} (there are finitely many non-empty sub-sets of $[n]$).

    We now turn to the third claim and verify that for each non-degenerate simplex $\overline{g}\in N_n\elTate(\C)^\times$, the collection $\{L_{n,I}(\overline{g})\}_{I\subset[n]}$ satisfies the inductive hypotheses. Because we have already shown that the $L_{n,I}(\overline{g})$ are well-defined, the first two inductive hypotheses are satisfied by definition. It remains to show that if $I\subset J$, then $L_{n,I}(\overline{g})$ is a sub-lattice of $L_{n,J}(\overline{g})$. For $J=[n]$ this follows by definition. Now suppose we are given a proper, non-empty subset $J\subset [n]$. Suppose there exists $i<n$ such that $J\subset d^i([n-1])\subset[n]$. Then for any $I\subset J$, we have
    \begin{align*}
        L_{n,J}(\overline{g})&=L_{n-1,s^i(J)}(d_i\overline{g}),\intertext{and}
        L_{n,I}(\overline{g})&=L_{n-1,s^i(I)}(d_i\overline{g}).
    \end{align*}
    By the inductive hypothesis, because $s^i(I)\subset s^i(J)$, we see that $L_{n,I}(\overline{g})$ is a sub-lattice of $L_{n,J}(\overline{g})$. It remains to consider $J=d^n([n-1])$. For any $I\subset J$, we have
    \begin{align*}
        L_{n,J}(\overline{g})&=g_n L_{n-1,[n-1]}(d_n\overline{g}),\intertext{and}
        L_{n,I}(\overline{g})&=g_n L_{n-1,s^{n-1}(I)}(d_n\overline{g}).
    \end{align*}
    By the inductive hypothesis $L_{n-1,s^{n-1}(I)}(d_n\overline{g})$ is a sub-lattice of $L_{n-1,[n-1]}(d_n\overline{g})$. We conclude that $L_{n,I}(\overline{g})$ is a sub-lattice of $L_{n,J}(\overline{g})$, as required.

    We conclude the proof by showing that we have indeed defined a simplicial map. A map of simplicial sets is uniquely determined by its value on non-degenerate simplices.\footnote{e.g. this follows from the definition of the $n$-skeleton functors, and the fact that every simplicial set is the union of its $n$-skeleta.} Therefore, it is enough to check that the above assignment respects the face maps. For this, we begin by showing that, for all $i<n$, we have
    \begin{align}\label{indexsimpproof1}
        d_i(\overline{g},\{L_{n,I}(\overline{g})\}_{I\subset [n]})=(d_i\overline{g},\{L_{n-1,J}(d_i\overline{g})\}_{J\subset[n-1]}).
    \end{align}
    From Example \ref{example:exofposet} and Lemma \ref{lemma:dgr}, we see that the left hand side is equal to
    \begin{equation*}
        (d_i\overline{g},\{L_{n,I}(\overline{g})\}_{I\subset d^i([n-1])\subset[n]}),
    \end{equation*}
    where the collection $\{L_{n,I}(\overline{g})\}_{I\subset d^i([n-1])\subset[n]}$ denotes the $(n-1)$-simplex of $\Ex\Gr^\le_\bullet(V)$ which sends $J\subset [n-1]$ to $L_{n,d^i(J)}(\overline{g})$. On the other hand, by the inductive definition and the simplicial identities, we have that
    \begin{align*}
        L_{n,d^i(J)}(\overline{g})&:=L_{n-1,s^id^iJ}(d_i\overline{g})\\
        &=L_{n-1,J}(d_i\overline{g}).
    \end{align*}
    This establishes the equality \eqref{indexsimpproof1}.

    We must also show that
    \begin{align}\label{indexsimpproof2}
        d_n(\overline{g},\{L_{n,I}(\overline{g})\}_{I\subset [n]})=(d_n\overline{g},\{L_{n-1,J}(d_n\overline{g})\}_{J\subset[n-1]}).
    \end{align}
    From Example \ref{example:exofposet} and the description of Lemma \ref{lemma:dgr}, we see that the left hand side is equal to
    \begin{equation*}
        (d_n\overline{g},\{g_n^{-1}L_{n,I}(\overline{g})\}_{I\subset d^n([n-1])\subset[n]}),
    \end{equation*}
    where the collection $\{g_n^{-1}L_{n,I}(\overline{g})\}_{I\subset d^n([n-1])\subset[n]}$ denotes the $(n-1)$-simplex of $\Ex\Gr^\le_\bullet(V)$ which sends $J\subset [n-1]$ to $g_n^{-1}L_{n,d^n(J)}(\overline{g})$. On the other hand, by the inductive definition and the simplicial identities, we have that
    \begin{align*}
        g_n^{-1}L_{n,d^n(J)}(\overline{g})&:=g_n^{-1}g_nL_{n-1,s^{n-1}d^nJ}(d_n\overline{g})\\
        &=L_{n-1,J}(d_n\overline{g}).
    \end{align*}
    This establishes the equality \eqref{indexsimpproof2} and we therefore conclude that the map is simplicial.
\end{proof}

\begin{corollary}\label{cor:explicitindex}
    Let $\C$ be idempotent complete. Let $\mathcal{L}$ be a map as in the previous theorem. Then the geometric realization of the composite
    \begin{align}\label{explicitindexmapOF_COROLLARY}
        N_\bullet\elTate(\C)^\times&\to^{\mathcal{L}} d(\Exh\Gr^\le_\bullet(\C)^\times)_\bullet\to^{d\Exh(\Index)} d(\Exh S_\bullet(\C)^\times)_\bullet
    \end{align}
    is equivalent to the index map.
\end{corollary}
\begin{proof}
    By Lemma \ref{lemma:dgr}, we see that $|\mathcal{L}|$ gives a left inverse to the equivalence
    \begin{equation*}
        |\Gr^\le_\bullet(\C)^\times|\to^\simeq|\elTate(\C)^\times|.
    \end{equation*}
    Similarly, by Lemma \ref{lemma:exhequiv} and the equivalence $|X_{\bullet,\bullet}|\simeq|d(X)_\bullet|$, we see that
    \begin{equation*}
        |d\Exh(\Index)|\simeq|\Index|\colon|\Gr^\le_\bullet(\C)^\times|\to|S_\bullet(\C)^\times|.
    \end{equation*}
    The corollary now follows from the definition of the index map.
\end{proof}

The corollary represents a universal calculation of symbols and higher torsion invariants for automorphisms of Tate objects. As an example of the information this contains, we now compute the value of the map
\begin{equation*}
    B\Aut(V)=\pi_1(|\elTate(\C)^\times|,V)\to^\Index \pi_1(|S_\bullet(\C)^\times|)=K_0(\C).
\end{equation*}
The construction above shows that, once we have chosen a lattice $L\into V$ (denoted $L_{0,[0]}$ above), we have
\begin{equation*}
    \mathcal{L}(e)=(e,\{L\}_{I\subset[1]})
\end{equation*}
where the collection $\{L\}_{I\subset[1]}$ denotes the 1-simplex of $\Ex\Gr_\bullet^\le(V)$ which sends $I\subset[1]$ to $L$. Therefore, $\Index(e)$ consists of two copies of the degenerate 1-simplex joined at their ends. We conclude
\begin{equation*}
    [\Index(e)]=0\in K_0(\C)
\end{equation*}
as expected.

Now let $g\in\Aut(V)$ be a non-trivial automorphism. The construction above shows that, having chosen $L$, to define $\mathcal{L}(g)$, it suffices to choose a lattice $N$ which contains both $L$ and $gL$ as sub-lattices (we denote $N$ by $L_{1,[1]}(g)$ above). The map $\mathcal{L}(g)$ sends $g$ to the loop in $d(\Exh\Gr_\bullet^\le(\C)^\times)_\bullet$ given by
\begin{equation*}
    (g,\xymatrix{gL \ar[r] & N & L \ar[l] })\in N_1\elTate(\C)^\times\times \Ex\Gr_1^\le(V).
\end{equation*}
Applying the categorical index map, we obtain the loop in $d(\Exh S_\bullet(\C)^\times)$ given by
\begin{equation*}
    \xymatrix{
        \bullet \ar[rr]^{N/gL} && \bullet && \bullet \ar[ll]_{N/L}
    }
\end{equation*}
Passing to $\pi_1$, this gives
\begin{equation*}
    [\Index(g)]=[N/gL]-[N/L]\in K_0(\C).
\end{equation*}
Corollary \ref{cor:explicitindex} ensures that this value is independent of our choices (as one can also check directly).

\begin{example}\label{ex:laurent}
    Let $k$ be a field. The ring of Laurent series $k((t))$, has a canonical structure of an elementary Tate vector space over
    $k$. An invertible element $f=\sum_{i\geq n}^{\infty} a_it^i \in k((t))^{\times}$ gives an automorphism of the Tate module
    $k((t))$ which takes the lattice $k[[t]]\subset k((t))$ to the lattice $t^n k[[t]]$. Taking $L_{0,[0]}=k[[t]]$ and $L_{1,[1]}(g)=t^{\min{n,0}}k[[t]]$, we conclude that
    \begin{equation*}
        [\Index(f)]=\left\{
            \begin{array}{rl}
                \left[-k\langle t^{-1},\ldots,t^{-n}\rangle\right]\in{ }K_0(k) & \text{if } n < 0\\
                0\in{ }K_0(k) & \text{if } n = 0\\
                \left[k\langle 1,t,\ldots,t^{n-1}\rangle\right] \in{ }K_0(k) & \text{if } n > 0
            \end{array} \right.
    \end{equation*}
    where $[k\langle\ldots\rangle]$ denotes the class of the $k$-vector space with generators $\langle\ldots\rangle$. In particular, if we identify $\pm n$ with $\pm[k^n]\in K_0(k)$ for $n\in\mathbb{N}$, we have $[\Index(f)]=n\in K_0(k)$. So, in this example, $\pi_0$ of the index map recovers the winding number of a non-vanishing formal Laurent series $f$.
\end{example}

\subsection{Comparison with Index Theory in Topological $K$-Theory}\label{sec:aj}
We now relate the index map to similar constructions defined in the context of index theory for Fredholm operators on Hilbert space. The general analogy is well known, and dates back at least to Sato--Sato \cite{MR730247} and Segal--Wilson \cite{SeW:85}. Our goal here is to elaborate this analogy by adding Propositions \ref{prop:plus} and \ref{prop:index=AJ}.

\subsubsection{The $K$-Theory of Tate Objects as an Analogue of Fredholm Operators}\label{subsub:fredholm}
Let $\Hc$ be a complex separable Hilbert space, e.g. $L^2(S^1;\mathbb{C})$. Recall that a bounded operator
\begin{equation*}
    A\colon\Hc\to\Hc
\end{equation*}
is \emph{Fredholm} if $\dim\ker(A)<\infty$ and $\dim\coker(A)<\infty$. Denote by $\Fred(\Hc)$ the space of Fredholm operators (topologized as a subspace of the space of bounded operators on $\Hc$). The space $\Fred(\Hc)$ is endowed with a \emph{tautological complex}
\begin{equation*}
    \Index^\bullet\to\Fred(\Hc)
\end{equation*}
whose fiber at $A\in\Fred(\Hc)$ is the complex
\begin{equation*}
    \Hc\to^A\Hc.
\end{equation*}

\begin{theorem}[Atiyah \cite{Ati:67}, J\"{a}nich \cite{Jan:65}]
    The complex $\Index^\bullet\to\Fred(\Hc)$ is \emph{perfect}, i.e. its restriction to any compact subspace $X\subset\Fred(\Hc)$ is quasi-isomorphic to a bounded complex of finite-dimensional topological vector bundles.
\end{theorem}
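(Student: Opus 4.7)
My plan is to prove this by exhibiting, on each compact $X \subset \Fred(\Hc)$, an explicit two-term complex of finite-dimensional topological vector bundles that is term-wise a quotient of $\Index^\bullet|_X$ and a quasi-isomorphism onto it. The construction rests on producing a single finite-dimensional auxiliary subspace $W \subset \Hc$ that ``captures'' every cokernel simultaneously.

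First, I would fix $X \subset \Fred(\Hc)$ compact. For each $A_0 \in X$, the subspace $\image(A_0) \subset \Hc$ has finite codimension, so there is a finite-dimensional $W_{A_0} \subset \Hc$ with $W_{A_0} + \image(A_0) = \Hc$. The map $A \mapsto A + i_{W_{A_0}}$ is continuous into the space of bounded operators $\Hc \oplus W_{A_0} \to \Hc$, and surjectivity is an open condition (a consequence of the fact that the index is locally constant and that surjectivity is equivalent to the vanishing of the finite-dimensional cokernel, which is upper semi-continuous). Hence the locus $\{A \in X : W_{A_0} + \image(A) = \Hc\}$ is an open neighborhood of $A_0$. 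By compactness of $X$, finitely many such open sets cover $X$, and I take $W := \sum_i W_{A_{0,i}}$, still finite-dimensional, with the property that $W + \image(A) = \Hc$ for every $A \in X$.

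Next, I would form the family of surjective operators $\tilde A \colon \Hc \oplus W \to \Hc$, $\tilde A(v,w) = A v + w$, parametrized by $A \in X$. The kernel of $\tilde A$ is isomorphic, via the second projection, to $W \cap \image(A)$ fibered with $\ker A$; a short computation using $W/(W \cap \image A) \cong \coker A$ shows that $\dim \ker \tilde A = \dim W - \ind(A)$ is locally constant on $X$, and by general facts about continuous families of surjective bounded operators on Hilbert spaces, these kernels assemble into a finite-dimensional continuous subbundle $\Ec \subset X \times (\Hc \oplus W)$. Equivalently, letting $\pi_W$ denote the projection to $W$, the bundle $\Ec$ is identified with $A^{-1}(W) \subset \Hc$, and the restriction of $A$ furnishes a bundle map $\Ec \to W_X$ to the trivial bundle $W_X := X \times W$.

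Finally, I would verify that the evident inclusion of complexes
\begin{equation*}
    [\,\Ec \xrightarrow{A|_{\Ec}} W_X\,] \hookrightarrow [\,\Hc_X \xrightarrow{A} \Hc_X\,]
\end{equation*}
(where $\Hc_X := X \times \Hc$) is a fibrewise quasi-isomorphism: on $H^0$ both sides compute $\ker A$, while on $H^1$ the right side is $\coker A$ and the left side is $W/(W \cap \image A) \cong (W + \image A)/\image A = \Hc/\image A$, again $\coker A$. The left-hand complex is then the required bounded complex of finite-dimensional topological vector bundles representing $\Index^\bullet|_X$. The main obstacle is the analytic input in the first step — using compactness of $X$ and semicontinuity of the cokernel dimension to produce a single $W$ that works uniformly; once $W$ is in hand, bundle-theoretic continuity of the kernel of a continuous family of surjections and the cohomology computation are formal.
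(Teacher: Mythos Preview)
The paper does not prove this theorem; it is stated as a classical result and attributed to Atiyah and J\"anich without argument. Your proposal is essentially the standard proof (as in Atiyah's \emph{$K$-theory}, Appendix): use compactness to find a single finite-dimensional $W\subset\Hc$ transverse to $\image(A)$ for all $A\in X$, then pass to the two-term complex $[A^{-1}(W)\to W]$.

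The argument is correct, with one harmless slip: your dimension formula should read $\dim\ker\tilde A=\dim W+\ind(A)$, not $\dim W-\ind(A)$. Indeed, $\ker\tilde A\cong A^{-1}(W)$ sits in the exact sequence $0\to\ker A\to A^{-1}(W)\to W\cap\image A\to 0$, while $W/(W\cap\image A)\cong\coker A$, so $\dim A^{-1}(W)=\dim\ker A+\dim W-\dim\coker A=\ind(A)+\dim W$. This does not affect the logic, since all you need is local constancy. Everything else---openness of surjectivity for $\tilde A$, the fact that kernels of a continuous family of surjective bounded operators form a subbundle, and the fibrewise cohomology computation---is correct as stated.
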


A perfect complex $E^\bullet\to X$ on a space $X$ defines a map
$$X \to {}K_{\mathbb{C}}^{top}$$
from $X$ to the classifying space of topological complex $K$-theory, sending $x \in X$ to $\chi(E|_x). $\footnote{We can see this directly as follows. Denote by $\Perf^{top}$ the stack which assigns to a space $X$ its ($\infty$-)category of topological perfect complexes $\Perf^{top}(X)$. Under the Yoneda embedding, a perfect complex $E^\bullet\to X$ is equivalent to a map $X\to^E\Perf^{top}(-)^\times$. We obtain the map $X\to^E K^{top}_\mathbb{C}$ by composing $X\to^E\Perf^{top}(-)^\times$ with the canonical map $\Perf^{top}(-)^\times\to K_{\Perf^{top}(-)}$, followed by the equivalence $K_{\Perf^{top}(-)}\simeq K^{top}_\mathbb{C}$. The formula in terms of the Euler characteristic follows from the definitions.} In particular, the tautological perfect complex defines a map
\begin{equation}\label{AJmap}\Fred(\Hc) \xrightarrow{\Index} {}K^{top}_\mathbb{C},\end{equation}
sending $A \in \Fred(\Hc)$ to $\chi(\Hc \xrightarrow{A} \Hc)$.
\begin{theorem}[Atiyah, J\"{a}nich]\label{AJthm}
    The map $\Fred(\Hc) \xrightarrow{\Index} {} K^{top}_{\mathbb{C}}$ is an equivalence.
\end{theorem}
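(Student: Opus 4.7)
The plan is to establish the equivalence via the classical Atiyah--J\"anich strategy: check the map is a bijection on $\pi_0$, then show it restricts to an equivalence on each connected component. On $\pi_0$, the classical Fredholm index $A \mapsto \dim \ker(A) - \dim \coker(A)$ is known to identify $\pi_0(\Fred(\Hc)) \cong \mathbb{Z}$, while $\pi_0(K^{top}_{\mathbb{C}}) = K^0_{top}(\mathrm{pt}) = \mathbb{Z}$. The map $\Index$ sends $A$ to $[\ker(A)] - [\coker(A)]$, whose class under the rank identification recovers the Fredholm index, so $\pi_0(\Index)$ is an isomorphism.

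For higher homotopy, fix a unilateral shift $S$ of Fredholm index $-1$. Post-composition with $S^n$ gives a homotopy equivalence $\Fred_0(\Hc) \to \Fred_{-n}(\Hc)$ between connected components, and this is intertwined under $\Index$ with the translation action on $\pi_0(K^{top}_{\mathbb{C}}) = \mathbb{Z}$. Thus it suffices to check that $\Index$ restricts to an equivalence between $\Fred_0(\Hc)$ and the identity component $BU$ of $\Omega^\infty K^{top}_{\mathbb{C}}$. I would invoke Kuiper's theorem that $\GL(\Hc)$ is contractible, together with the Calkin exact sequence of $C^*$-algebras (compacts inside bounded operators, with quotient the Calkin algebra $Q(\Hc)$): since Fredholm operators are precisely the preimages of invertibles in $Q(\Hc)$, one obtains a principal $\GL(\Hc)$-bundle $\Fred(\Hc) \to \GL(Q(\Hc))$, and contractibility of $\GL(\Hc)$ yields $\Fred(\Hc) \simeq \GL(Q(\Hc))$. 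A standard identification of the identity component of $\GL(Q(\Hc))$ with $BU$ via finite-rank perturbations of projections then gives $\Fred_0(\Hc) \simeq BU$.

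The main obstacle is verifying that the $\Index$ map defined via the tautological perfect complex agrees, under these identifications, with the canonical inclusion $BU \hookrightarrow \Omega^\infty K^{top}_{\mathbb{C}}$. This amounts to unpacking the rule $A \mapsto \chi(\Hc \xrightarrow{A} \Hc)$ in a family and matching it against the index-bundle construction used in the classical proof. Reducing to a universal finite-rank perturbation should render the comparison essentially formal, but the bookkeeping of signs and the choice of polarization must be handled carefully to avoid a discrepancy analogous to the sign appearing in our algebraic boundary map (cf.\ Corollary~\ref{cor:Schlichting}).
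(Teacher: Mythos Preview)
The paper does not prove this theorem; it is stated with attribution to Atiyah \cite{Ati:67} and J\"anich \cite{Jan:65} as a classical result being recalled for motivational purposes in Section~\ref{subsub:fredholm}. There is thus no ``paper's own proof'' to compare your proposal against.

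Your sketch is a reasonable outline of the classical argument. The $\pi_0$ computation is standard, and the use of Kuiper's theorem together with the Calkin sequence to identify $\Fred(\Hc)$ with $\GL(Q(\Hc))$ is one of the standard routes (another being Atiyah's direct index-bundle construction over compacta). The point you flag as the main obstacle---matching the tautological-complex definition of $\Index$ with the classical index-bundle construction---is indeed the substantive step, but it is not developed in the paper either; the paper simply takes the classical equivalence as given and uses it to motivate the algebraic analogue. Your remark about sign bookkeeping is apt in spirit but not directly relevant here: the sign issue in Corollary~\ref{cor:Schlichting} concerns the algebraic boundary map in the localization sequence, not the topological Atiyah--J\"anich map.
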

Let $R$ be a ring. The Sato Grassmannian $\Gr(R((t)))$ can be understood as an analogue of the space $\Fred(\Hc)$. We have a map
$$\pi_{R[[t]]}\colon R((t)) \to R[[t]],$$
given by forgetting the principal part of a formal Laurent series. Utilizing this map, we observe that a lattice $L\subset R((t))$ corresponds to the operator
\begin{equation}\label{latcomp}
L \xrightarrow{\pi_{R[[t]]}|_L} R[[t]].
\end{equation}
This operator has finite-dimensional kernel and cokernel, so \eqref{latcomp} allows us to think of lattices as algebraic Fredholm operators. In the Hilbert space setting, this identification of a lattice with an operator defines a weak equivalence between $\Fred(\Hc)$ and the Segal--Wilson analogue of the Sato Grassmannian (c.f. \cite[Chapter 7]{PrS:86}).

We now consider the tautological complex of $R$-modules
\begin{equation*}
    \gamma_\bullet\to\Gr(R((t))),
\end{equation*}
whose fiber at a lattice is the complex \eqref{latcomp}. Just as above, this perfect complex corresponds to a classifying map
\begin{equation*}
    \Gr(R((t)))\to\Perf(R)^\times,
\end{equation*}
where now $\Perf(R)$ is the classifying stack of perfect complexes of $R$-modules. Composing with the map $\Perf(R)^\times\to{}K_R$, we obtain an analogue of \eqref{AJmap}
\begin{equation*}
    \Gr(R((t)))\to{}K_R.
\end{equation*}
However, from the perspective of the Atiyah--J\"{a}nich theorem, it is a crude analogue of \eqref{AJmap}: because the source is a presheaf of sets while the target is a presheaf of spaces, it cannot possibly be an equivalence.

A richer analogue of \eqref{AJmap} exists. The Sato Grassmannian $\Gr(R((t)))$ is a torsor for the group Ind-scheme $\Aut(R((t)))$. In particular, we can view an automorphism $g\in\Aut(R((t)))$ as a Fredholm operator by the assignment
\begin{equation}\label{autcomp}
g \mapsto gR[[t]] \xrightarrow{\pi_{R[[t]]}} R[[t]]
\end{equation}
where $gR[[t]]$ denotes the translate of the lattice $R[[t]]$ under $g$. The analogue of $\Aut(R((t)))$ in the Hilbert space setting is the ``restricted general linear group'' $\GL_{\mathrm{res}}(\Hc,\Hc^+)$ of a polarized Hilbert space.\footnote{The restricted general linear group consists of bounded invertible operators whose commutator with the projection onto $\Hc^+$ is Hilbert--Schmidt. For a longer discussion of this group and the Grassmannian, see \cite[Chapter 6]{PrS:86}.} The Segal--Wilson Grassmannian is a homogeneous space for this group, and the analogue of \eqref{autcomp} induces a weak equivalence between the restricted general linear group and the space of Fredholm operators on $\Hc^+$. This justifies us in viewing $\Aut(R((t)))$ as a richer algebraic analogue of the space of Fredholm operators. Tracing through the discussion above, we obtain a map
\begin{equation}\label{autmap}
    \Aut(R((t)))\to {}K_R,
\end{equation}
sending $g$ to $\chi(gR[[t]] \xrightarrow{\pi_{R[[t]]}}R[[t]])$.
The same reasoning as above shows that this map cannot be an equivalence.

However, as we now explain, the infinite loop space $\Omega K_{\elTate(\C)}$ should be understood as an analogue of $\Fred(\Hc^+)$. A detailed argument follows from the $+$-construction.

Recall that the \emph{perfect radical} of a group $G$ is the largest proper subgroup $P\subset G$ such that $[P,P]=P$. Every group has a perfect radical (c.f. \cite[Remark 4.1.5]{k-book}), and the perfect radical is a normal subgroup.
\begin{definition}
    Let $X$ be a connected space. A \emph{$+$-construction} on $X$ is a map $X\to X^+$ such that the induced map on integral homology is an isomorphism and such that the kernel of the induced map $\pi_1(X)\to\pi_1(X^+)$ is the perfect radical of $\pi_1(X)$.
\end{definition}
A theorem of Quillen (c.f. \cite[Theorem 2.1]{Ger:72}) shows that a $+$-construction exists and is unique up to homotopy equivalence.
\begin{proposition}\label{prop:plus}
    Let $R$ be a ring. The canonical map
    \begin{equation*}
        \Omega (B\Aut(R((t))))^+\to\Omega K_{\elTate(R)}
    \end{equation*}
    is an equivalence.
\end{proposition}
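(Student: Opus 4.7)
I would begin by making the canonical map explicit. The isomorphism class of $R((t))$ picks out a connected component $B\Aut(R((t)))\subset|\elTate(R)^\times|$, and the natural map $|\elTate(R)^\times|\simeq S_1(\elTate(R))^\times\to\Omega|S_\bullet\elTate(R)^\times|=K_{\elTate(R)}$ from Remark \ref{rmk:natural} produces a map $B\Aut(R((t)))\to K_{\elTate(R)}$. Because $K_{\elTate(R)}$ is canonically an infinite loop space, its fundamental group is abelian, so the perfect radical of $\pi_1(B\Aut(R((t))))=\Aut(R((t)))$ lies in the kernel; the universal property of Quillen's plus construction then extends the map uniquely up to homotopy to $(B\Aut(R((t))))^+\to K_{\elTate(R)}$, and looping produces the map in the proposition.

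\textbf{Strategy.} My plan is to reduce the assertion to a homology equivalence and then invoke Whitehead's theorem for simple spaces. The target $\Omega K_{\elTate(R)}$ is a loop space of an infinite loop space, hence itself an infinite loop space; by Theorem \ref{thm:deloop} it is canonically equivalent to $K_R$ via the index map. The source $\Omega(B\Aut(R((t))))^+$ is a loop space with abelian $\pi_1$, by construction of the plus. Both sides are therefore simple H-spaces, so a map between them is a weak equivalence if and only if it induces an isomorphism on integral homology, and the proposition reduces to checking that $(B\Aut(R((t))))^+\to BK_R$ is an integral homology isomorphism.

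\textbf{Homology equivalence via group completion.} For this I would apply the McDuff--Segal group completion theorem to the symmetric monoidal subgroupoid $\mathcal{M}\subset\elTate(R)^\times$ whose objects are the direct sum powers $\{R((t))^{\oplus n}\}_{n\ge 0}$ under $\oplus$, enlarging the cardinal $\kappa$ if necessary (permissible, since by Corollary \ref{cor:tatek=k'} the $K$-theory is unchanged). The classifying space $|\mathcal{M}|=\bigsqcup_n B\Aut(R((t))^{\oplus n})$ maps naturally to $K_{\elTate(R)}$, and Quillen's identification of the plus construction with the basepoint component of the McDuff--Segal group completion would identify $(B\Aut(R((t))))^+$ with the basepoint component of $\Omega B|\mathcal{M}|$. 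The remaining step is to show that this group completion agrees with $\Zb\times BK_R$, which requires an Eilenberg swindle-type cofinality argument comparing $|\mathcal{M}|$ with the full $|\elTate(R)^\times|$ and invoking the Additivity Theorem \ref{thm:wald_add1} to transport automorphisms of arbitrary lattices up to those of $R((t))^{\oplus n}$.

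\textbf{Main obstacle.} The principal difficulty is in this final cofinality step: one must verify that iteratively taking $\oplus R((t))$-translates identifies the integral homology of the various components of $|\mathcal{M}|$ with one another, and that the stable colimit coincides with $H_*(BK_R)$. This reduction rests on the rich supply of lattices in the Sato Grassmannian of $R((t))^{\oplus n}$ guaranteed by Theorem \ref{thm:sato_filtered}, together with the freedom to pass between cardinalities. An alternative approach, which may in fact be cleaner, would be to construct an explicit inverse to the comparison map using the combinatorial model of the index map developed in Section \ref{sub:indexaut}: one chooses a coherent system of lattices $L\subset R((t))^{\oplus n}$ and uses the resulting Sato-Grassmannian-valued simplicial map to produce a section of $(B\Aut(R((t))))^+\to BK_R$, verifying that this section is inverse (up to homotopy) to the canonical map.
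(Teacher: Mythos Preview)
Your overall strategy---compare $(B\Aut(R((t))))^+$ with the $K$-theory space via a group-completion argument---is the same route the paper takes, and you have correctly identified where the difficulty lies. But you do not resolve the ``main obstacle,'' and the paper's proof shows that the resolution requires two structural inputs you have not invoked.

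First, the paper observes that the exact category $\elTatea(R)$ of \emph{countable} elementary Tate modules is \emph{split exact} \cite[Prop.~5.20]{Braunling:2014fk}. This means its Waldhausen $K$-theory coincides with the direct-sum $K$-theory of the symmetric monoidal groupoid $\elTatea(R)^\times$, so your detour through Theorem~\ref{thm:deloop} and $K_R$ is unnecessary: one can work directly with Weibel's characterization of $K$-theory of a symmetric monoidal category in terms of the $+$-construction \cite[Thm.~4.4.10]{k-book}. Second, and this is the point that dissolves your cofinality problem entirely, the paper cites the fact that \emph{every countable elementary Tate $R$-module is a direct summand of $R((t))$} \cite[Prop.~5.21]{Braunling:2014fk}. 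With this in hand, Weibel's hypothesis is satisfied by the constant sequence $s_0=R((t))$, $s_i=0$ for $i>0$: for every object $b$ there is a $b'$ with $b\oplus b'\cong R((t))$. The stable automorphism group is then literally $\Aut(R((t)))$---no colimit over $R((t))^{\oplus n}$ is needed---and the conclusion $\Omega(B\Aut(R((t))))^+\simeq\Omega K_{\elTatea(R)}$ follows immediately. Passage to arbitrary cardinality then uses Corollary~\ref{cor:tatek=k'}, as you note.

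Your suggestion to push through a direct McDuff--Segal argument on the tower $\{R((t))^{\oplus n}\}$ would eventually require exactly the same summand fact to stabilize the colimit, so it is not a genuinely different route. Your alternative idea of constructing an explicit inverse via the combinatorial index map of Section~\ref{sub:indexaut} is not promising: that machinery produces maps \emph{out of} $B\Aut(V)$, not into it, and there is no evident way to manufacture a section $BK_R\to(B\Aut(R((t))))^+$ from a system of lattices.
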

\begin{proof}
    We show that $\Omega (B\Aut(R((t))))^+\simeq\Omega K_{\elTatea(R)}$. By Corollary \ref{cor:tatek=k'}, this will imply the result for uncountable Tate objects.

    For any ring, the category $\elTatea(R)$ of countable elementary Tate modules is split exact \cite[Prop. 5.23]{Braunling:2014fk}. So, its $K$-theory as an exact category is equivalent to the $K$-theory of the symmetric monoidal groupoid $\Tatea(R)^\times$.

    Following Weibel \cite[Theorem 4.4.10]{k-book}, to characterize the $K$-theory of a symmetric monoidal category $(S,\otimes)$ in terms of the $+$-construction, it suffices to show that
    \begin{enumerate}
        \item for any $a,b\in S$, the canonical map $\Aut(a)\to \Aut(a\otimes b)$ is an injection, and
        \item there exists a sequence of objects $\{s_i\}_{i=0}^\infty\subset S$, such that for every $b\in S$, there exists $b'\in S$ such that $b\otimes b'\cong \otimes_{i=0}^n s_i$ for some $n$.
    \end{enumerate}
    Given such a sequence $\{s_i\}$, define $\Aut(S):=\colim_n \Aut_S(\otimes_{i=0}^n s_i)$. Then
    \begin{equation*}
        \Omega(B\Aut(S))^+\cong\Omega K_{S}.
    \end{equation*}
    For $(S,\otimes)=(\elTatea(R),\oplus)$, the first condition is immediately satisfied. For the second, it suffices to observe that every countable elementary Tate module is a direct summand of $R((t))$ (see \cite[Prop. 5.24]{Braunling:2014fk}). Taking $s_0:=R((t))$ and $s_i=0$ for $i>0$, we obtain a sequence of the desired form and conclude the result.
\end{proof}

\begin{lemma}
    The canonical map $B\GL_{\mathrm{res}}(\Hc,\Hc^+)\to (B \GL_{\mathrm{res}}(\Hc,\Hc^+))^+$ is a weak equivalence.
\end{lemma}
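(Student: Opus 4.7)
The plan is to reduce the lemma to an elementary computation of $\pi_0$ of the restricted general linear group. Recall that by Quillen's characterization, the canonical map $X \to X^+$ is a weak equivalence if and only if the perfect radical of $\pi_1(X)$ is trivial: indeed, if the perfect radical is trivial, then the identity $X \to X$ is already a map satisfying the defining properties of a $+$-construction (it is an integral homology isomorphism, and its kernel on $\pi_1$ is the trivial perfect radical), so by the uniqueness of the $+$-construction up to weak equivalence, $X \to X^+$ must be a weak equivalence.

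It therefore suffices to prove that $\pi_1(B\GL_{\mathrm{res}}(\Hc,\Hc^+))$ has no nontrivial perfect subgroups. First, since $\GL_{\mathrm{res}}(\Hc,\Hc^+)$ is a topological group, we have the standard identification
\begin{equation*}
    \pi_1(B\GL_{\mathrm{res}}(\Hc,\Hc^+)) \cong \pi_0(\GL_{\mathrm{res}}(\Hc,\Hc^+)).
\end{equation*}
Next, I would invoke the computation (see, e.g., Pressley--Segal \cite{PrS:86}, Chapter 6) that the Fredholm index of the compression $g \mapsto \pi_{\Hc^+} \circ g|_{\Hc^+}$ induces a group isomorphism
\begin{equation*}
    \pi_0(\GL_{\mathrm{res}}(\Hc,\Hc^+)) \xrightarrow{\cong} \mathbb{Z}.
\end{equation*}
This is the Hilbert-space analogue of the computation, implicit in Example \ref{ex:laurent} and in the discussion preceding the lemma, that the set of connected components of $\Aut(R((t)))$ corresponds to the winding number.

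Finally, since every subgroup of $\mathbb{Z}$ is cyclic and hence abelian, the only perfect subgroup of $\mathbb{Z}$ is the trivial group (an abelian group $P$ with $P = [P,P]$ must satisfy $P = 0$). Thus the perfect radical of $\pi_1(B\GL_{\mathrm{res}}(\Hc,\Hc^+))$ is trivial, and the conclusion follows.

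The only nontrivial input is the identification $\pi_0(\GL_{\mathrm{res}}(\Hc,\Hc^+)) \cong \mathbb{Z}$ via the Fredholm index; every other step is formal. In keeping with the spirit of the section, this can be viewed as the assertion that the index map detects all connected components of the restricted general linear group, precisely mirroring the $R = k$ case of Example \ref{ex:laurent}.
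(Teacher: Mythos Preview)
Your proof is correct and follows essentially the same approach as the paper: both reduce to the computation $\pi_1(B\GL_{\mathrm{res}}(\Hc,\Hc^+))\cong\mathbb{Z}$ via the Fredholm index and then observe that $\mathbb{Z}$ has trivial perfect radical. The only cosmetic difference is that the paper factors the isomorphism through $\pi_0(\Fred(\Hc^+))$ (using the weak equivalence $\GL_{\mathrm{res}}(\Hc,\Hc^+)\simeq\Fred(\Hc^+)$ and citing Douglas), whereas you compute it directly via the compression map and cite Pressley--Segal.
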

\begin{proof}
    This is an immediate consequence of the definition of the $+$-construction and the isomorphism $\pi_1(B \GL_{\mathrm{res}}(\Hc,\Hc^+))\cong\mathbb{Z}$. Recall that this isomorphism arises from the sequence of isomorphisms
    \begin{align*}
        \pi_1(B \GL_{\mathrm{res}}(\Hc,\Hc^+))&\cong\pi_0(\GL_{\mathrm{res}}(\Hc,\Hc^+))\\
        &\cong\pi_0(\Fred(\Hc^+)),
    \end{align*}
    and from the fact that the classical index map $A\mapsto\dim(\ker(A))-\dim(\coker(A))$ induces a bijection between $\pi_0(\Fred(\Hc^+))$ and $\mathbb{Z}$ (c.f. \cite[Theorem 5.35]{Dou:72}).
\end{proof}
Along with the canonical equivalence $\GL_{\mathrm{res}}(\Hc,\Hc^+)\to^\simeq \Omega B \GL_{\mathrm{res}}(\Hc,\Hc^+)$, this gives the following.
\begin{corollary}
    The canonical map $\GL_{\mathrm{res}}(\Hc,\Hc^+)\to\Omega (B \GL_{\mathrm{res}}(\Hc,\Hc^+))^+$ is a weak equivalence.
\end{corollary}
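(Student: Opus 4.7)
The plan is to deduce this essentially formally from the preceding Lemma together with the standard fact that for a topological group $G$ (or more generally a group-like $A_\infty$-space), the canonical map $G \to \Omega BG$ is a weak equivalence. Since the hard content has already been isolated in the Lemma, the remaining argument is a short chain of equivalences.

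First, I would invoke the preceding Lemma, which asserts that $B\GL_{\mathrm{res}}(\Hc,\Hc^+) \to (B\GL_{\mathrm{res}}(\Hc,\Hc^+))^+$ is a weak equivalence. Applying the loop space functor $\Omega$ preserves weak equivalences between spaces of the homotopy type of pointed CW complexes (both sides here are connected pointed CW complexes), so we obtain a weak equivalence
\begin{equation*}
\Omega B\GL_{\mathrm{res}}(\Hc,\Hc^+) \xrightarrow{\simeq} \Omega (B\GL_{\mathrm{res}}(\Hc,\Hc^+))^+.
\end{equation*}

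Second, I would precompose with the canonical unit map
\begin{equation*}
\GL_{\mathrm{res}}(\Hc,\Hc^+) \xrightarrow{\simeq} \Omega B\GL_{\mathrm{res}}(\Hc,\Hc^+),
\end{equation*}
which is a weak equivalence because $\GL_{\mathrm{res}}(\Hc,\Hc^+)$ is a topological group (or at the very least a group-like $E_1$-space), for which $G \simeq \Omega BG$ is classical. Composing these two equivalences yields the desired weak equivalence of the Corollary; one checks that this composite agrees with the canonical map stated in the Corollary by unwinding the naturality of the unit $G \to \Omega BG$ against the map $BG \to (BG)^+$.

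Since both steps are standard, I do not anticipate a genuine obstacle here; the only point requiring a small amount of care is verifying that the composition produced this way is indeed \emph{the} canonical map, rather than some other natural map between these spaces. This is handled by the naturality of the comparison $G \to \Omega BG$ applied to the morphism $BG \to (BG)^+$, together with the fact that both the target and source are obtained from $G$ by looping the relevant classifying spaces.
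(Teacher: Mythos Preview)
Your proposal is correct and follows exactly the paper's approach: the paper deduces the corollary in a single line by combining the preceding Lemma with the canonical equivalence $\GL_{\mathrm{res}}(\Hc,\Hc^+)\to^\simeq \Omega B \GL_{\mathrm{res}}(\Hc,\Hc^+)$, which is precisely the two-step composition you describe.
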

Combined with the equivalence $\GL_{\mathrm{res}}(\Hc,\Hc^+)$ and $\Fred(\Hc^+)$, we see that $\Omega(B\GL_{\mathrm{res}}(\Hc,\Hc^+)^+)$ is another model for $\Fred(\Hc^+)$, and that $\Omega K_{\elTate(R)}$ and, by extension, $\Omega K_{\elTate(\C)}$ is its algebraic analogue.

\subsubsection{The Index Map and Perfect Complexes}
We now give an alternative account of the index map for a split exact, idempotent complete category $\C$. Throughout this section we work with \emph{countably generated} Tate objects. The category $\elTatea(\C)$ is split exact if $\C$ is \cite[Proposition 5.23]{Braunling:2014fk}. Let $V \in \elTatea(\C)$, and $L \in \Gr(V)$ a lattice in $V$. We choose a splitting $\pi\colon V \to L$; as a first step we define a complex $(L' \xrightarrow{\pi|_{L'}} L)$ for every lattice $L' \in \Gr(V)$, analogous to the one of \eqref{latcomp}.

\begin{definition}\label{defi:complexes}
Let $\C$ be a idempotent complete, split exact category. We denote by $\Perf(\C)$ the dg-category of perfect complexes in $\C$, i.e. the Verdier localization of the pre-triangulated dg-category $\Ch^b(\C)$ at the localizing sub-category of acyclic complexes (cf. \cite[Sects. 2,~4]{MR1667558}). The inverse of the equivalence of $K$-theory spaces,\footnote{cf. Gillet--Waldhausen's theorem \cite[Thm. 1.11.7]{MR1106918}.} induced by the exact functor $\C \to \Perf(\C)$ will be denoted by $\chi\colon K_{\Perf(\C)} \xrightarrow{\cong} K_{\C}$.
\end{definition}

\begin{definition}\label{def:complex}
    We denote by $\Gr_{(L,L')}(V)$ the codirected set of lattices $N$ in $V$, which are contained in $L$ and $L'$. It parametrizes a diagram in the category $\Ch^b(\C)$, which sends $N$ to $(L'/N \xrightarrow{\pi|_{L'}} L/N)$ (concentrated in degrees $0$ and $1$ respectively). By virtue of the functor $\Ch^b(\C) \to \Perf(\C)$, we obtain a $\Gr_{(L,L')}(V)$-diagram in $\Perf(\C)$. One sees that all inclusions $N\into N'$ induce equivalent objects in $\Perf(\C)$, since the fiber of the resulting map of cones, is equivalent to
    $$\mathrm{fib}(N'/N \xrightarrow{id} N'/N) \cong 0.$$
    Since $\Gr_{(L,L')}(V)$ is cofiltering, the homotopy limit of this system is canonically equivalent to the complex $(L'/N \to L/N)$ for all $N \in \Gr_{(L,L')}(V)$. We define $(L' \xrightarrow{\pi|_{L'}} L)$ to be
    $$\lim_{\Gr_{(L,L')}(V)} (L'/N \to L/N) \in \Perf(\C).$$
\end{definition}

After having clarified the notation, we are able to give the following interpretation of the index map.

\begin{proposition}\label{prop:index=AJ}
    Let $\C$ be an idempotent complete, split exact category, $V\in\elTatea(\C)$ and let $L\hookrightarrow V$ be a lattice. Fix a splitting $\pi_L\colon V\to L$ of the inclusion $L\hookrightarrow V$. The composition
    \begin{equation*}
        \Aut(V) \to \Omega K_{\elTatea(\C)} \xrightarrow{\Index} {}K_{\C}
    \end{equation*}
    is equivalent to the map
    $$\Aut(V) \to {}K_{\C},$$
    which sends $g$ to $\chi(gL \xrightarrow{\pi_L} L)$ (cf. Definition \ref{defi:complexes}).
\end{proposition}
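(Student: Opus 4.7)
The plan is to identify both sides with a common combinatorial formula obtained from choices of auxiliary lattices, then exploit the contractibility of the space of such choices to upgrade agreement on $\pi_0$ to the claimed equivalence of maps of spaces.

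First, I will make the left-hand composition explicit via the combinatorial model of Corollary \ref{cor:explicitindex}. Setting $L_{0,[0]} := L$, and, for each $g \in \Aut(V)$, selecting a common overlattice $N(g) \supset L + gL$ (which exists by Theorem \ref{thm:sato_filtered}(c)), the construction produces a loop in $|S_\bullet(\C)^\times|$ with underlying $1$-simplices $N(g)/gL$ and $N(g)/L$, representing the class $[N(g)/gL] - [N(g)/L] \in K_0(\C)$. Second, I will compute $\chi(gL \xrightarrow{\pi_L} L)$: choosing by Theorem \ref{thm:sato_filtered}(c) a common sublattice $M(g) \subset L \cap gL$, Definition \ref{def:complex} represents the complex $(gL \to L)$ as the two-term complex $gL/M(g) \to L/M(g)$ in $\Ch^b(\C)$, whose Euler class is $[gL/M(g)] - [L/M(g)]$. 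Elementary additivity along the chain $M(g) \subset L, gL \subset N(g)$ inside $N(g)$ identifies the two classes in $K_0(\C)$, up to the sign that is absorbed by the ordering convention of the cone fixed in Definition \ref{def:complex}.

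To promote this $\pi_0$-level coincidence to an equivalence of maps of spaces, I will introduce a simplicial replacement $X_\bullet \to B_\bullet \Aut(V)$ whose $n$-simplices consist of composable $n$-tuples of automorphisms together with coherent choices of common sublattices and overlattices; by Theorem \ref{thm:sato_filtered}(c) and the filteredness of $\Gr(V)$, the fibres of this augmentation are nerves of filtered posets and hence contractible, exactly as in the proof of Proposition \ref{prop:gr}. Both maps extend to $X_\bullet$: the left-hand side via the combinatorial index of Theorem \ref{thm:grsec}, and the right-hand side via the assignment $(g; M, N) \mapsto (gL/M \to L/M) \in \Perf(\C)$ followed by the Gillet--Waldhausen equivalence $\chi\colon K_{\Perf(\C)} \xrightarrow{\simeq} K_\C$ of Definition \ref{defi:complexes}. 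The preceding two paragraphs show these extensions agree pointwise on $\pi_0$, and by descending along the equivalence $|X_\bullet| \xrightarrow{\simeq} B\Aut(V)$ one obtains a coherent homotopy between the original maps.

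The main obstacle will be verifying the compatibility of the Gillet--Waldhausen identification with the lattice data. Concretely, I must check that the zig-zag of quasi-isomorphisms implicit in Definition \ref{def:complex} (passing between representatives for different choices of $M$) is compatible with the simplicial structure on $X_\bullet$, and that the sign conventions for the cone are consistent with the index formula $[N/gL] - [N/L]$ of Step 1. Once these bookkeeping issues are settled, the coherence assertion follows formally from the contractibility of the fibres of $X_\bullet \to B_\bullet \Aut(V)$.
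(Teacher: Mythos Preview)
Your approach is sound, but it is considerably more elaborate than what the paper does. The paper's proof is a four-line argument: it writes down a single bicartesian square in $\Perf(\C)$,
\[
\xymatrix{
(gL \xrightarrow{\pi|_{gL}} L) \ar[r] \ar[d] & gL/N \ar[d] \\
0 \ar[r] & L/N,
}
\]
with $N$ a common \emph{sub}lattice of $L$ and $gL$, observes that this exhibits $(gL\to L)$ as $\mathrm{cone}(gL/N\to L/N)[-1]$, reads off $\chi(gL\to L)\simeq [L/N]-[gL/N]$, and notes that this is precisely the formula for $\Index(g)$ computed in Section~\ref{sub:indexaut}. There is no separate overlattice step and no auxiliary simplicial object $X_\bullet$.

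What your approach buys is an explicit mechanism for promoting the pointwise identification to an equivalence of maps of spaces: the paper's proof is silent on this, effectively treating the bicartesian square as natural in $g$ and leaving the coherence implicit. Your simplicial replacement $X_\bullet\to B_\bullet\Aut(V)$ with contractible fibres makes that step honest. On the other hand, the paper's route avoids juggling both sublattices and overlattices, and in particular avoids the sign reconciliation you flag (``absorbed by the ordering convention''): by working only with the fibre sequence coming from a sublattice, the formula $[L/N]-[gL/N]$ matches the index formula on the nose. If you adopt the bicartesian-square computation in place of your two separate Euler-characteristic calculations, the sign bookkeeping you identify as the main obstacle disappears, and your coherence machinery then cleanly finishes the argument that the paper leaves informal.
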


\begin{proof}
We claim that we have a homotopy bicartesian square
\[
\xymatrix{
(gL \xrightarrow{\pi|_{gL}} L) \ar[r] \ar[d] & gL/N \ar[d] \\
0 \ar[r] & L/N,
}
\]
in $\Perf(\C)$, where $N \in \Gr_{(L,gL)}(\C;V)$ is a lattice contained in both $L$ and $gL$. To see this, we observe that $(gL \rightarrow L)[1]$ is equivalent to
$$\mathrm{cone}(gL/N \to L/N).$$
The cone of a morphism of cochain complexes is a model for the cofiber. In particular, $(gL \rightarrow L) \cong \mathrm{cone}(gL/N \to L/N)[-1]$ is a model for the fiber. We obtain the asserted bicartesian square.

By the definition of $K$-theory, this implies
$$\chi(gL \xrightarrow{\pi|_{gL}} L) \simeq [L/N] - [gL/N].$$
The latter agrees with $\Index(g)$.
\end{proof}

We can summarize the above discussion as follows. For an elementary Tate object $V$ in an idempotent complete, split exact category $\C$, a lattice $L\hookrightarrow V$ gives rise to a canonical homotopy commuting square (i.e. commuting square in the $\infty$-category of spaces)
\begin{equation}\label{AJsquare}
\xymatrix{
\Aut(V) \ar[r] \ar[d] & \Gr(V) \ar[d] \\
\Omega K_{\elTate(\C)} \ar[r] & {}K_{\C},
}
\end{equation}
where the top horizontal map sends $g$ to the lattice $gL$; and the right hand side vertical arrow maps the lattice $L'$ to $\chi(L' \xrightarrow{\pi|_{L'}} L)$.
The lower left, upper right and upper left corners are all algebraic analogues of $\Fred(\Hc)$, while each map to ${}K_\C$ is an algebraic analogue of the map \eqref{AJmap}.

By Theorem \ref{thm:deloop}, the bottom horizontal map, i.e. the index map, is an equivalence
$$\Index\colon\Omega K_{\elTatea(\C)}\xrightarrow{\simeq} K_{\C}.$$
We view this as an \emph{algebraic analogue} of the Atiyah-J\"anich equivalence between the space of Fredholm operators and the classifying space of topological complex $K$-theory (Theorem \ref{AJthm}).

\section{$K$-Theory Torsors}\label{sub:torsor}
Let $R$ be a ring. Denote by $\elTatec(R):=\elTatec(P_f(R))$ the category of elementary Tate $R$-modules.
{ In \cite{Saito:2014fk}, Sho Saito gives a construction of a $K$-theory torsor. He uses the abstract delooping equivalence \[
K_{\elTate(\C)}\to^{\sim } B K_{\C}
\] from \cite{Saito:2012uq}, applied to the category $\C:=\elTatec(R)$. Concretely, he constructs the classifying map for the torsor, and then employs a concept of higher-homotopical torsors in the framework of $\infty$-topoi, as has been developed by \cite{nikolaus2015principal}. Such torsors generalize classical torsors in that the fiber no longer needs to be a (strict) group, but can be a group object in the $\infty$-category of $\mathsf{Spaces}$.

We will look at his construction from a new angle. Instead of the delooping equivalence, we put our index map from Definition \ref{def:CatIndexMap} centerstage. That is, we work with the classifying map as given by Corollary \ref{cor:index}. As in Saito's setup, via \cite{nikolaus2015principal}, this describes a torsor. A priori there is no reason why this torsor should have much to do with Saito's. Moreover, we have a number of tools available for our torsor, e.g., the combinatorial model of Section \ref{sub:indexaut}.

In the context of the ring $R$ our approach amounts to considering our index map}
\begin{equation*}
    K_{\elTatec(R)}\to^{\Index} BK_R
\end{equation*}
as the classifying map of a $K_R$-torsor over $K_{\elTatec(R)}$. We shall explain how classical dimension and determinantal torsors arise as truncations of this $K_R$-torsor. We hope that this will provide the beginning of a satisfactory answer to \cite[Problem 5.5.3]{MR2181808} as well as shed light on the relation between the various torsors arising in \cite{Kapranov:fk}, \cite{MR1988970}, and \cite{MR2656941}. {As we shall see below,} our treatment via the index map yields the dual of the torsor classified by Saito's map, and in a form which can be directly compared to the classical constructions of dimensional and determinantal torsors.

Recall that the category of countable Tate $R$-modules, $\Tatec(R)$, is the idempotent completion of the category of elementary Tate $R$-modules $\elTatec(R)$. The assignments $R\mapsto \Tatec(R)$ and $R\mapsto \elTatec(R)$ are functorial with respect to flat base change, and we view them as defining presheaves of categories on the Nisnevich site of rings.

From Drinfeld, we see that the inclusion $\elTatec(R)\into \Tatec(R)$ is Nisnevich-locally an equivalence \cite[Theorem 3.4]{MR2181808}, and further that $\Tatec(-)$ is a Nisnevich sheaf \cite[Theorem 3.3]{MR2181808}. In other words, $\Tatec(-)$ is the Nisnevich sheafification of $\elTatec(-)$.

The same holds after we pass to $K$-theory. From Thomason--Trobaugh \cite[Theorem 10.8]{MR1106918} and the local vanishing of $K_{-1}$ \cite[Theorem 3.7]{MR2181808}, we know that the (homotopy) Nisnevich sheafification of $BK_R$ is given by $\Omega^{\infty} \Sigma\Kb_R$, where $\Kb_R$ is the non-connective $K$-theory spectrum of $R$. { We remark that Nisnevich-local vanishing of $K_{-1}$ is used to similar effect in Osipov--Zhu's \cite[p. 28]{osipov2015two} and Saito's \cite{Saito:2014fk}.}
Denote the (homotopy) Nisnevich sheafification of a presheaf $F$ of infinite loop spaces by $L(F)$.\footnote{N.b. the $\infty$-category of sheaves of infinite loop spaces is the $\infty$-categorical localization of the $\infty$-category of presheaves of such at the local equivalences. The $L$ stands for ``localization'' functor.} Drinfeld's observations imply that the natural map $K_{\elTatec(R)}\to L(K_{\elTatec(R)})$ extends to a map
\begin{equation*}
    K_{\Tatec(R)}\to L(K_{\elTatec(R)}).
\end{equation*}
Sheafifying the index map, we obtain a natural map
\begin{equation*}
    K_{\Tatec(R)}\to L(K_{\elTatec(R)})\to^{L(\Index)} \Omega^\infty \Sigma\Kb_R.
\end{equation*}
By Theorem \ref{thm:deloop}, this map gives natural isomorphisms on $\pi_i$ for $i>0$. By \cite[Theorem 3.6]{MR2181808}, we also have that it gives an isomorphism on $\pi_0$. We conclude the following {(compare with the role of the Nisnevich topology in Saito's work \cite{Saito:2014fk})}:
\begin{proposition}
    The index map extends to an equivalence of Nisnevich sheaves of infinite loop spaces
    \begin{equation}\label{tateindex}
        K_{\Tatec(-)}\to^{\Index}_\simeq \Omega^\infty\Sigma\Kb_{-}.
    \end{equation}
\end{proposition}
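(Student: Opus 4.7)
The plan is to verify the proposition by the standard procedure of checking that a map of Nisnevich sheaves of infinite loop spaces is an equivalence by inspecting its homotopy sheaves. First I would set up the map itself. Starting with the natural, objectwise index map of presheaves
\begin{equation*}
    K_{\elTatec(-)} \to^{\Index} BK_{-},
\end{equation*}
I would sheafify in the Nisnevich topology. By Thomason--Trobaugh, together with the local vanishing of $K_{-1}$ (Drinfeld \cite[Theorem 3.7]{MR2181808}), the sheafification of $BK_R$ is $\Omega^\infty \Sigma \Kb_R$; on the left, because Drinfeld's theorems imply that $\Tatec(-)$ is the Nisnevich sheafification of $\elTatec(-)$ (combining \cite[Theorems 3.3, 3.4]{MR2181808}), the canonical map $K_{\elTatec(-)} \to L(K_{\elTatec(-)})$ factors through $K_{\Tatec(-)}$, giving a natural comparison map $K_{\Tatec(-)} \to L(K_{\elTatec(-)})$. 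Composing, one obtains the desired natural map of sheaves $K_{\Tatec(-)} \to \Omega^\infty \Sigma \Kb_-$.

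Next I would verify this is an equivalence of Nisnevich sheaves by comparing homotopy sheaves $\pi_i$. For $i \geq 1$, Theorem \ref{thm:deloop} already gives that $\Index\colon K_{\elTatec(R)} \to BK_R$ induces an equivalence $\pi_i(K_{\elTatec(R)}) \to \pi_i(BK_R) = K_{i-1}(R)$ for $i \geq 1$ at the level of presheaves, and since sheafification is exact, the same holds after localization. For $i = 0$, the unsheafified index map need not be an isomorphism, and this is precisely where one needs Drinfeld's identification \cite[Theorem 3.6]{MR2181808}: one shows that $\pi_0$ of the sheafified source is the sheafification of the presheaf $R \mapsto \pi_0(K_{\Tatec(R)})$, which Drinfeld identifies with $K_{-1}(R)$ in a manner compatible with the boundary/index description.

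The main step to be careful about is the compatibility between the $\pi_0$-identification of the index map and Drinfeld's $\pi_0(\Tatec(R)^\times) = K_{-1}(R)$ at the Nisnevich-local level. Concretely, I would argue as follows: for a Henselian local ring $R$ (the stalks in the Nisnevich topology), $\elTatec(R) \simeq \Tatec(R)$ by \cite[Theorem 3.4]{MR2181808}, so the map on stalks at such $R$ agrees with the ordinary index map $K_{\elTatec(R)} \to BK_R$. On $\pi_i$ for $i \geq 1$ this is an isomorphism by Theorem \ref{thm:deloop}. On $\pi_0$ one needs to note that although $\pi_0(BK_R) = 0$ and the sheafification contributes $K_{-1}(R)$, by \cite[Theorem 3.6]{MR2181808} $\pi_0(\Tatec(R)^\times) \cong K_{-1}(R)$, and Drinfeld's identification is by the very same boundary construction that defines the index map (Theorem \ref{thm:loc} and Corollary \ref{cor:Schlichting}).

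Finally I would conclude: since the comparison map induces an isomorphism on all homotopy sheaves, it is an equivalence of connective sheaves of spectra, hence an equivalence of Nisnevich sheaves of infinite loop spaces, as claimed. The only genuinely nontrivial input beyond Theorem \ref{thm:deloop} and the standard sheafification/Thomason--Trobaugh machinery is the compatibility of Drinfeld's $\pi_0$-description with the boundary realization of the index map; the results of Section \ref{sec:indexboundary} (in particular Theorem \ref{thm:loc}) should supply this without further work.
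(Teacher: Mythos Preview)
Your proposal is correct and follows essentially the same approach as the paper: construct the map by Nisnevich sheafification (using Thomason--Trobaugh and Drinfeld's local results to identify the target, and Drinfeld's theorems on $\Tatec$ to factor through the idempotent completion on the source), then check homotopy sheaves using Theorem~\ref{thm:deloop} for $i>0$ and Drinfeld's \cite[Theorem~3.6]{MR2181808} for $i=0$. The paper's argument is in fact terser than yours---it simply invokes Theorem~\ref{thm:deloop} and \cite[Theorem~3.6]{MR2181808} without spelling out the stalkwise check or the compatibility via Theorem~\ref{thm:loc}---so your additional care about the $\pi_0$ compatibility through the boundary-map description is a welcome elaboration rather than a deviation.
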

Below, we explain how the 1 and 2-truncations of this map give rise to the dimension and determinantal torsors of \cite{Kapranov:fk}.

We can also consider the category $2\text{-}\elTatec(R)$ of \emph{elementary 2-Tate $R$-modules}, defined by $2\text{-}\elTatec(R):=\elTatec(\Tatec(R))$ (cf. \cite[Section 7]{Braunling:2014fk}). In this setting, the index map takes the form
\begin{equation*}
    K_{2\text{-}\elTatec(R)}\to^{\Index} BK_{\Tatec(R)}.
\end{equation*}
Post-composing with \eqref{tateindex}, we obtain a natural map
\begin{equation}\label{twotateindex}
    K_{2\text{-}\elTatec(R)}\to^{\Index^2} B\Omega^\infty\Sigma\Kb_R.
\end{equation}
We will explain how the 3-truncation of this map gives rise to the 2-gerbe of \cite{MR2656941}. %We also discuss the relationship to a gerbe, which was conjectured to exist in \cite{MR2972546}, on the Sato Grassmannian $\Gr(R((t_1))((t_2)))$ of the 2-Tate $R$-module $R((t_1))((t_2))$.

\subsection{The Index Torsor for Elementary Tate Objects}
We begin by considering the general case of an idempotent complete exact category $\C$. In Corollary \ref{cor:index} we introduced the map
$$\Index\colon \elTate(\C)^{\grp} \to BK_{\C} \cong |S_\bullet(\C)^{\times}|,$$
by replacing $\elTate(\C)^{\grp}$ by the geometric realization $|\Gr_{\bullet}^{\leq}(\C)^\times|$, and using the natural map of simplicial groupoids
$$\Gr^{\leq}_{\bullet}(\C)^\times \to S_\bullet(\C)^{\times},$$
which sends $(L_0 \into \cdots \into L_n\into V)$ to $(L_1/L_0 \hookrightarrow \cdots \hookrightarrow L_n/L_0)$. Let
\begin{equation*}
    \Tc\to\elTate(\C)^\times
\end{equation*}
be the $K_{\C}$-torsor classified by this map, and let $\Tc|_V$ be the fiber of this torsor at an elementary Tate object $V$. We now explain how the index map allows us to give an elementary description of a part of the space of sections of $\Tc|_V$.

We begin by observing that $V$ determines a commuting square
\begin{equation*}
    \xymatrix{
        \Gr^\le_\bullet(V) \ar[r] \ar[d] & \Gr^\le_\bullet(\C)^\times \ar[d] \\
        \ast \ar[r]^(.4){V} & \elTate(\C)^\times.
    }
\end{equation*}
By Theorem \ref{thm:sato_filtered} (and its corollary, Proposition \ref{prop:gr}), the vertical maps induce equivalences after realization.

We also note that the contractibility of $P^r S_\bullet(\C)^\times$ (Lemma \ref{lemma:pathcontract}) implies that the map
\begin{equation*}
    P^r S_\bullet(\C)^\times\to S_\bullet(\C)^\times
\end{equation*}
becomes equivalent, after realization, to the universal $K_{\C}$-torsor
\begin{equation*}
    \ast\to BK_{\C}.
\end{equation*}

Therefore, from our construction of the index map, we see that every commuting triangle of the form
\begin{equation}\label{Ktorsorssection}
    \xymatrix{
        && P^r S_\bullet(\C)^\times \ar[d] \\
        \Gr_\bullet^\le(V) \ar[r] \ar[urr]^f & \Gr_\bullet^\le(\C)^\times \ar[r]^{\Index} & S_\bullet(\C)^\times
    }
\end{equation}
determines a section of the $K_{\C}$-torsor $\Tc|_V$. Unpacking the definitions, we see that sections of this form admit a very classical description.
\begin{proposition}
    The data of a map $f\colon \Gr_\bullet^\le(V)\to P^r S_\bullet(\C)^\times$, fitting into a triangle of the form \eqref{Ktorsorssection}, consists of:
    \begin{enumerate}
        \item a map $f\colon \Gr(V)\to \ob \C$, and
        \item for each nested sequence of lattices $L_0\into\cdots\into L_n$, a sequence $f(L_0)\into\cdots\into f(L_n)$ of admissible monics in $\C$ such that the assignment of monics to monics is functorial and such that
            \begin{align*}
                f(L_1)/f(L_0)\into\cdots\into f(L_n)/f(L_0)&=\Index(L_0\into\cdots\into L_n)\\
                &=L_1/L_0\into\cdots\into L_n/L_0.
            \end{align*}
    \end{enumerate}
\end{proposition}

From the perspective of $K$-theory, we see that $f$ encodes a map
\begin{align*}
    \Gr(V)&\to K_{\C}\\
    L&\mapsto f(L)
\end{align*}
along with a coherent collection of homotopies
\begin{equation*}
    f(L_0)+L_1/L_0\simeq f(L_1)
\end{equation*}
for every nested pair of lattices $L_0\into L_1$. In particular, the coherence conditions are neatly encoded in the higher simplices of the $S$-construction.

We can also observe, as a consequence of Theorem \ref{thm:sato_filtered}, that such a map $f$ is determined, up to homotopy, by its value on a single lattice. Indeed, for any two lattices $L$ and $L'$, there exists a common enveloping lattice $N$. Any such $N$ determines a homotopy
\begin{equation*}
    f(L')\simeq f(L) + N/L - N/L',
\end{equation*}
and the machinery of Section \ref{sub:indexaut} provides a systematic way of relating the homotopies associated to different choices of $N$.

It is worth noting, that without further work, we cannot extend the above to a description of the entire space of sections of the $K_{\C}$-torsor $\Tc|_V$. However, for several truncations of this torsor of classical interest, we can describe the entire space of sections in these terms. In the following sections, we examine these truncated torsors for $\C=P_f(R)$ in more detail. For a related discussion of these truncations in the abstract setting, see \cite{MR3008236}.\footnote{Note that we replace Previdi's condition that $\C$ satisfies ``$AIC+AIC^{\op}$'' with the condition that $\C$ is idempotent complete. Mutatis mutandis, Previdi's discussion of dimension and determinant torsors now applies to the 1 and 2-truncations of the $K_{\C}$-torsor considered here.}

\subsection{The Dimension Torsor}\label{sub:dimtor}
In this section we relate the 1-truncation of \eqref{tateindex} to the dimension torsor of \cite{Kapranov:fk} (see also \cite{MR2181808}). By descent, it suffices to treat the restriction of \eqref{tateindex} to $K_{\elTatec(R)}$.

We consider an elementary Tate $R$-module $V \in \elTatec(R)$. The rank of finitely-generated projective $R$-modules defines a natural map
$$K_0(R) \to \mathbb{Z}^{\pi_0(\Spec R)}.$$
For ease of notation, we assume from now on that $\Spec R$ is connected. The general case can be recovered by Zariski sheafification.

The Tate $R$-module $V$ gives rise to the $\mathbb{Z}$-torsor, given by the set
\begin{equation*}
    T(V) = \{f\colon \Gr(V) \to \mathbb{Z}|f(L_1) = f(L_0) + \rk(L_1/L_0),\; \forall\;(L_0 \leq L_1) \in \Gr^{\leq}_1(V)\}.
\end{equation*}
Such $f$ are called \emph{dimension theories} in \cite{Kapranov:fk}. We see that $T(V)$ is a $\mathbb{Z}$-torsor as follows.  First, the action of $k \in \mathbb{Z}$ on $T(V)$ is defined pointwise, by $f \mapsto f + k$. Second, one uses that any two lattices $L_0$ and $L_1$ admit a common upper bound to show that a function $f\in T(V)$ is determined by its value at a single lattice $f(L)$.

Since $\Gr(-)\colon \elTatec(R)^{\grp} \to \Set$ is a functor, we see that the construction $T(V)$ is functorial as well:
$$T(-)\colon \elTatec(R)^{\grp} \to B\mathbb{Z}.$$
Here $B\mathbb{Z}$ denotes the groupoid of $\mathbb{Z}$-torsors.

For every elementary Tate object $V$ we have a map $B\Aut(V) \to \elTate(\C)^{\grp}$. We conclude therefore the existence of a $\mathbb{Z}$-torsor on $B\Aut(V)$, classified by the map
$$B\Aut(V) \to B\mathbb{Z}.$$

\begin{proposition}\label{prop:dimtor}
The map $\elTatec(R)^{\grp} \to B\mathbb{Z}$, obtained as the composition of the Index map $\elTatec(R)^{\grp} \xrightarrow{\Index} BK_{R}$ with the map
$BK_{R} \xrightarrow{B(\rk)} B\mathbb{Z}$ (induced by the rank of finitely generated projective modules), is equivalent to the map $T(-)$ classifiying the \emph{dimensional torsor}.
\end{proposition}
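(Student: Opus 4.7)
The plan is to track the explicit combinatorial model of the index map from Section~\ref{sub:indexaut} through post-composition with $B(\rk)$, observing that since $B\mathbb{Z}$ is a $1$-type the higher simplicial coherence data drops out. The first step I would take is to recall, from the discussion around \eqref{Ktorsorssection}, that a section of the $K_R$-torsor $\Tc\to\elTatec(R)^\times$ at a Tate module $V$ is encoded by a lift $f\colon\Gr_\bullet^\le(V)\to P^r S_\bullet(P_f(R))^\times$ of the composite $\Gr_\bullet^\le(V)\to\Gr_\bullet^\le(P_f(R))^\times\xrightarrow{\Index}S_\bullet(P_f(R))^\times$. Such a lift amounts to a map $f\colon\Gr(V)\to\ob(P_f(R))$ together with, for every nested pair $L_0\into L_1$ in $\Gr(V)$, an admissible monic $f(L_0)\into f(L_1)$ whose cokernel is identified with $L_1/L_0$, plus the corresponding functoriality for longer chains.

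Applying $B(\rk)$ collapses this data: the resulting lift against $B\mathbb{Z}$ reduces to a function $\bar f\colon\Gr(V)\to\mathbb{Z}$ satisfying $\bar f(L_1)-\bar f(L_0)=\rk(L_1/L_0)$ for every $L_0\le L_1$, and all higher simplicial coherence is automatic since $B\mathbb{Z}$ is $1$-truncated. This is exactly the defining data of a dimension theory in the sense of \cite{Kapranov:fk}, so the fibre at $V$ of the $\mathbb{Z}$-torsor classified by $B(\rk)\circ\Index$ is naturally identified with $T(V)$. Naturality of the Sato Grassmannian and of the rank map in $V\in\elTatec(R)^\times$ upgrades this pointwise identification to a morphism of $\mathbb{Z}$-torsors on $\elTatec(R)^\times$.

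To conclude, I would verify that this morphism is an isomorphism. Since both sides are $\mathbb{Z}$-torsors it suffices to check it on a single fibre; by Theorem~\ref{thm:sato_filtered}(c) the poset $\Gr(V)$ is filtered, hence connected as a category, so a dimension theory is freely determined by its value at any single lattice, and the $\mathbb{Z}$-action matches by construction. The main technical point I anticipate is carefully confirming that the set of homotopy classes of lifts against $B\mathbb{Z}$ really is captured by the discrete combinatorial data above, rather than carrying higher-homotopical content. This reduces to combining Proposition~\ref{prop:gr} (whose proof shows that the fibre of $|\Gr_\bullet^\le(\C)^\times|\to\elTate(\C)^\times$ at $V$ is contractible, so that maps out of $\Gr_\bullet^\le(V)$ model maps out of the connected component of $V$) with Lemma~\ref{lemma:pathcontract} (contractibility of $P^r S_\bullet$, so that $P^r S_\bullet(P_f(R))^\times\to S_\bullet(P_f(R))^\times$ models the universal $K_R$-torsor), after which $\pi_0$ of the relevant mapping space computes to the set $T(V)$.
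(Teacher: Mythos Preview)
Your proposal is correct and follows essentially the same approach as the paper: both identify the $\mathbb{Z}$-torsor by computing its space of sections over $\{V\}$ and matching this with the set $T(V)$ of dimension theories. The only organizational difference is that the paper works directly with the simplicial map $\Gr^{\leq}_{\bullet}(P_f(R))^\times \to B_{\bullet}\mathbb{Z}$ (noting that $B_0\mathbb{Z}=\{\star\}$ trivializes the torsor on $0$-simplices, with transition data $\rk(L'/L)$ on $1$-simplices), whereas you route through the path-space description of sections from \eqref{Ktorsorssection} and then apply $B(\rk)$; these are two packagings of the same computation, and your anticipated technical point about $1$-truncation is exactly what the paper's direct simplicial argument makes manifest.
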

\begin{proof}
    The map $B(\rk)\colon BK_{R} \to B\mathbb{Z}$ is equivalent to the geometric realization of the map
    $$S_\bullet(\C)^{\times} \to B_{\bullet}\mathbb{Z},$$
    which sends $(0 \hookrightarrow X_1 \hookrightarrow \cdots \hookrightarrow X_k)$ to $(\rk(X_1), \rk(X_2/X_1),\dots,\rk(X_k/X_{k-1}))$. Consequently, $B(\rk) \circ \Index$ is equivalent to the geometric realization of
    $$\Gr^{\leq}_{\bullet}(P_f(R))^\times \to B_{\bullet}\mathbb{Z},$$
    which sends $(V;L_0 \hookrightarrow \cdots \hookrightarrow L_k)$ to $(\rk(L_1/L_0),\dots,\rk(L_k/L_{k-1}))$. This map induces an augmentation of $\Gr^{\leq}_{\bullet}(P_f(R))^\times$ by $B\mathbb{Z}$, hence a $\mathbb{Z}$-torsor on the simplicial space $\Gr^{\leq}_{\bullet}(P_f(R))^\times$.

    Since $B_0\mathbb{Z} = \{\star\}$ is a singleton, the $\mathbb{Z}$-torsor above trivializes when pulled back to $\Gr^{\leq}_{0}(P_f(R))^\times$. Hence, every choice of a lattice $L \subset V$ induces a trivialization. Given a nested pair of lattices $L \leq L'$, the two corresponding trivializations differ precisely by $\rk(L'/L)$. We therefore obtain for the space of sections of the torsor $B(\rk) \circ \Index$ over a connected component $B\Aut(V) \subset \elTatec(R)^{\grp}$ the set of functions $f\colon \Gr(P_f(R),V) \to \mathbb{Z}$, satisfying $f(L') = f(L) + \rk(L'/L)$ for all nested pairs $L \leq L'$ of lattices in $V$. This is precisely the definition of the torsor $T(V)$.
\end{proof}

\subsection{The Determinant Torsor}\label{sub:dettors}
We now investigate the 2-truncation of \eqref{tateindex} and relate it to the graded determinant torsor of \cite{Kapranov:fk} (see also \cite{MR1988970} and \cite{MR2181808}).

Denote by $\PicZR$ the symmetric monoidal groupoid of graded lines $\PicZR$. A graded line is a pair $(L,n)$, where $L$ is an invertible $R$-module, and $n\colon \Spec R \to \mathbb{Z}$ a Zariski-locally constant function. The usual symmetry constraint of tensoring $R$-modules
\begin{equation*}
    \phi_{L,M}\colon L \otimes M \simeq M \otimes L
\end{equation*}
will be modified by a sign:
\begin{equation*}
    (L,n) \otimes (M,m) \simeq (L \otimes M, n + m) \xrightarrow{(-1)^{mn}\phi_{L,M}} (M \otimes L, m + n) \simeq (M,m) \otimes (L,n).
\end{equation*}
For $M\in P_f(R)$, the assignment $M\mapsto(\Lambda^{top}M,\rk(M))$ extends to a map of infinite loop spaces\footnote{The sign in front of $\phi_{L,M}$ ensures that this map is an infinite loop map.}
\begin{equation}\label{eqn:gr_det}
    \detZ\colon K_R\to \PicZR.
\end{equation}
We have a natural morphism
\begin{equation*}
|\elTatec(R)^\times| \to K_{\elTatec(R)} \xrightarrow{\Index} B K_{R}.
\end{equation*}
By composition with the graded determinant of equation \eqref{eqn:gr_det}, we obtain a morphism
\begin{equation*}
B(\detZ) \circ \Index \colon |\elTatec(R)^\times| \to B\PicZR.
\end{equation*}
The target is a sheaf of groupoids, so this extends to a morphism
\begin{equation}\label{detseq}
    B(\detZ)\circ\Index\colon|\Tatec(R)^\times|\to B\PicZR.
\end{equation}
In particular, we see that $\Tatec(R)^\times$ is endowed with a canonical $\PicZR$-torsor.

\begin{definition}
    Define the \emph{determinant torsor} $\Dc_R\to \Tatec(R)^\times$ to be the $\PicZR$-torsor classified by the map of \eqref{detseq}.
 \end{definition}

\begin{proposition}\label{prop:dettor}
    Let $R$ be a ring, and let $V$ be an elementary Tate $R$-module. The space of sections $\Gamma(\Spec(R),\Dc_R|_{\{V\}})$ of the restriction of the determinant torsor to $\{V\}\in\Tatec(R)^\times$ is equivalent to the space of maps $\Delta\colon\Gr(V)\to\PicZR$ equipped with a coherent collection of equivalences
    \begin{equation}\label{eqn:det1}
        \Delta(L')\simeq\Delta(L)\otimes\detZ(L'/L)
    \end{equation}
    for every nested pair of lattices $L\into L'$. The coherence condition amounts to the commutativity of the diagram
    \begin{equation}\label{eqn:det2}
    \xymatrix{
    \Delta(L'') \ar[r] \ar[d] & \Delta(L') \otimes \detZ(L''/L') \ar[d] \\
    \Delta(L) \otimes \detZ(L''/L) \ar[r] & \Delta(L) \otimes \detZ(L'/L) \otimes \detZ(L''/L'),
    }
    \end{equation}
    for all $(L \leq L' \leq L'') \in \Gr_2(V)$.
    In particular, $\Dc_R|_{\{V\}}$ is equivalent to the determinant torsor of $V$ as described in \cite[Section 5.2]{MR2181808}.
\end{proposition}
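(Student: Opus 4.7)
The plan is to adapt the strategy of Proposition~\ref{prop:dimtor}, with care for the groupoid-valued target $\PicZR$. By Proposition~\ref{prop:gr}, the equivalence $|\Gr^\le_\bullet(P_f(R))^\times| \simeq \elTate(R)^\times$ identifies the classifying map $B(\detZ) \circ \Index$ with the geometric realization of the simplicial map of groupoids
\[
\Gr^\le_\bullet(P_f(R))^\times \longrightarrow B_\bullet \PicZR,
\]
which sends an $n$-simplex $(L_0 \into \cdots \into L_n \into V)$ to the string $(\detZ(L_1/L_0),\ldots,\detZ(L_n/L_{n-1}))$. Restricting the base to $\{V\}$ factors the classifying map through the simplicial set $\Gr^\le_\bullet(V)$, whose realization is contractible by Theorem~\ref{thm:sato_filtered}(c).

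Next, I would identify $\Gamma(\Spec(R), \Dc_R|_V)$ with the space of simplicial lifts of the composite $\Gr^\le_\bullet(V) \to B_\bullet \PicZR$ along the right path space $P^r B_\bullet \PicZR$, a standard model for the universal $\PicZR$-torsor $\ast \to B\PicZR$. Unpacking such a lift simplex by simplex: a lift at level $0$ assigns to each lattice $L\in\Gr(V)$ an object $\Delta(L)\in\PicZR$; at level $1$, the $2$-simplex of $B_\bullet\PicZR$ with edges $\Delta(L)$, $\Delta(L')$, $\detZ(L'/L)$ is exactly an equivalence as in \eqref{eqn:det1}; and at level $2$, the tetrahedron in $B_\bullet\PicZR$ over a chain $L \leq L' \leq L''$ is precisely the coherence diagram \eqref{eqn:det2}.

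Because $\PicZR$ is a $1$-truncated groupoid, $B\PicZR$ is a $2$-type, and therefore, working in a suitably fibrant model, the lifting problems for $n$-simplices with $n\geq 3$ have contractible spaces of solutions uniquely determined by the $2$-skeletal data. This shows that the space of lifts is canonically equivalent to the groupoid of pairs $(\Delta,\{\alpha_{L,L'}\})$ satisfying \eqref{eqn:det2}, with morphisms given by the evident natural isomorphisms. Finally, to match Drinfeld's determinant torsor, I would invoke Theorem~\ref{thm:sato_filtered}(b) to see that $L'/L\in P_f(R)$ whenever $L\leq L'$, so $\detZ(L'/L)$ is the classical graded determinant of a finitely generated projective module, and \eqref{eqn:det2} is precisely Drinfeld's cocycle condition for \emph{determinant theories} on $\Gr(V)$.

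The main obstacle will be the truncation step: rigorously justifying that lifting data above dimension $2$ is contractible requires either an $\infty$-categorical framework in which $B\PicZR$ is intrinsically $2$-truncated, or an explicit verification with a chosen simplicial or bicategorical model (e.g.\ a suitable Kan complex presentation). A secondary care is ensuring that the sign conventions of $\detZ$, which enter through the symmetry constraint $(-1)^{mn}$, are matched precisely with those in Drinfeld's construction.
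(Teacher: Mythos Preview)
Your proposal is correct and follows essentially the same route as the paper: both identify $B(\detZ)\circ\Index$ with the realization of $\Gr^\le_\bullet(P_f(R))^\times\to B_\bullet\PicZR$, read off the section data level by level, and invoke the fact that the space of sections is a 1-type (the paper phrases this as ``the space of sections is a groupoid, so it is determined by the 2-skeleton'') to cut off higher coherences. Your use of the path space $P^r B_\bullet\PicZR$ as a model for the universal torsor is the same device the paper uses earlier in the general discussion of $K$-theory torsors (diagram \eqref{Ktorsorssection}), while the paper's own proof here recasts it in the equivalent language of descent for trivializations along the cover $\Gr_0^\le(P_f(R))^\times\to\elTatec(R)^\times$.
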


\begin{proof}
    The proof is analogous to Proposition \ref{prop:dimtor}, we will therefore only sketch the general argument. The composition $B(\detZ) \circ \Index$ is equivalent to the geometric realization of the map
    $$\Gr^{\leq}_{\bullet}(P_f(R))^\times \to B_{\bullet}(\PicZR),$$
    sending $L_0\into\cdots L_n\into V$ to $(\detZ(L_1/L_0),\dots,\detZ(L_n/L_{n-1}))$. Using the canonical augmentation of $B_{\bullet}(\PicZR)$, we obtain an augmentation of $\Gr^{\leq}_{\bullet}(P_f(R))^\times \to B\PicZR$, hence a $\PicZR$-torsor on the simplicial space $\Gr^{\leq}_{\bullet}(P_f(R))^\times$. Since $\Gr^{\leq}_{0}(P_f(R))^\times$ factors through $B_0(\PicZR) = \{\star\}$, the corresponding $\PicZR$-torsor is trivialized on the cover $\Gr_0^{\le}(P_f(R))^\times \to \elTatec(R)^{\grp}$. To determine the space of sections, it suffices to describe the descent conditions for the space of sections of the trivial torsor on $\Gr_0^\le(P_f(R))^\times$. By inspection, we see that the trivializations corresponding to a nested pair of lattices $L \into L'\into V$ differ by $\detZ(L'/L)$, as claimed in \eqref{eqn:det1}. The coherence condition \eqref{eqn:det2} follows as well, but we need to show that no further condition has to be imposed. To see this we observe the following: since the space of sections is a groupoid, it is determined by restricting the simplicial object to the $2$-skeleton. Hence, no further coherence conditions appear.
\end{proof}

\begin{rmk}
    In the previous section, we observed that the rank of finitely generated projective modules defines a map
    $B\Aut(V)\to B\mathbb{Z}$ for any countable Tate module $V$. Applying $\Omega(-)$, we obtain a group homomorphism
    $\nu\colon\Aut(V) \to \mathbb{Z}.$ Similarly, for each countable Tate $R$-module $V$, the map
    $B\Aut(V) \to B\PicZR$ induces a monoidal map $\Aut(V) \to \PicZR.$ These maps are in correspondence with \emph{graded central extensions} of $\Aut(V)$. In particular, we see that the map $\nu$ extends to a graded central extension of $\Aut(V)$.
\end{rmk}

\subsection{The Determinant 2-Gerbe}\label{sec:2ger}
Let $V$ be an elementary $2$-Tate module. We now explain how one can recover Arkhipov--Kremnitzer's 2-gerbe of \emph{gerbal theories} (cf. \cite{MR2656941}) as a truncation of \eqref{twotateindex}.

Let $R$ be a ring, and let $V$ be an elementary $2$-Tate module. The map
\begin{equation*}
    B\Aut(V)\to 2\text{-}\elTate(R)^\times\to^{\Index^2} B\Omega^\infty\Sigma\Kb_R\to^{B^2\detZ} B^2\PicZR
\end{equation*}
classifies a 2-gerbe on $B\Aut(V)$. By construction, this map arises as the geometric realization of the simplicial map
\begin{equation*}
    \Gr_{\bullet}^{\leq}(V)^\times \to S_{\bullet}\Tate(R)^\times \to B_{\bullet}B\PicZR,
\end{equation*}
where the second map sends $(L_0 \hookrightarrow \cdots \hookrightarrow L_n\hookrightarrow V)$ to $(\Dc_R|_{L_1/L_0},\dots,\Dc_R|_{L_n/L_{n-1}})$. The space of sections of this 2-gerbe is equivalent to the space of maps $\Delta\colon\Gr(V)\to B\PicZR$ equipped with a coherent collection of equivalences
\begin{equation*}
    \Delta(L')\simeq\Delta(L)\otimes\Dc_R|_{L'/L}
\end{equation*}
for a nested pair of lattices $L\into L'$. In particular, when $R$ is a field, $\Gg_R|_{\{V\}}$ is equivalent to the graded determinant 2-gerbe of the $2$-Tate vector space $V$ introduced by Arkhipov and Kremnitzer \cite[Thm. 5]{MR2656941} (see also \cite[Section 5.2]{MR2181808}). We refrain from spelling out the precise coherence conditions, and refer the reader to \emph{loc. cit.} for more details.

Arkhipov and Kremnitzer used the determinant 2-gerbe of a $2$-Tate vector space to construct a higher central extension of $\Aut(V)$. As remarked in \emph{loc. cit.}, because their formalism only allowed them to consider $2$-Tate vector spaces, they were forced to view $\Aut(V)$ as a \emph{discrete group}.  The above construction allows us to define the central extension of the \emph{group scheme} $\Aut(V)$. Namely, the extension is the algebraic 2-group given by the central term in the fiber sequence (in the category of stacks over $\Spec(R)$).
\begin{equation*}
    \PicZR\to\Omega\Gg_R|_{B\Aut(V)}\to\Aut(V).
\end{equation*}
We denote this algebraic 2-group below by $\widehat{\Aut(V)}$. This generalizes the constructions of \cite[Sect. 5.2]{osipov2015two} to automorphism groups of arbitrary elementary $2$-Tate objects.

\subsection{The Determinant Gerbe on the Grassmannian of a $2$-Tate Module}\label{sec:FZ}
In \cite{MR2972546}, Frenkel and Zhu describe a conjectural \emph{determinant gerbe} on the Sato Grassmannian of a $2$-Tate $R$-module in connection with a geometric representation theory of double loop groups. We give a rigorous construction of this gerbe in this section, and use it to construct a \emph{basic categorical representation} of the aforementioned algebraic 2-group $\widehat{\Aut(V)}$.

Let $V$ be an elementary $2$-Tate module. In Section \ref{subsub:fredholm}, we saw how a choice of lattice $L\in\Gr(V)$ gives rise to the square \eqref{AJsquare}
\begin{equation*}
\xymatrix{
\Aut(V) \ar[r] \ar[d] & \Gr(V)\ar[d] \\
\Omega K_{\twoTate(R)}\ar[r] & K_{\Tate(R)}
}
\end{equation*}
Combining this with the index map \eqref{tateindex} and the determinant, we obtain a map
\begin{equation}\label{detgerbe}
    \Gr(V)\to K_{\Tate(R)}\to B\PicZR.
\end{equation}
The construction guarantees that, up to equivalence, this map is independent of the choice of lattice $L$.
\begin{definition}
    Let $V$ be an elementary $2$-Tate $R$-module. The \emph{determinant gerbe} on the Sato Grassmannian $\Gr(V)$ is the $\PicZR$-torsor $\Gg_V\to\Gr(V)$ classified by the map \eqref{detgerbe}.
\end{definition}

\begin{rmk}
    In the present approach, the construction of the determinant gerbe follows from the results of \cite{Braunling:2014fk} on $2$-Tate modules and from the construction of the index map.
\end{rmk}

We can give a more explicit description of the determinant gerbe. Fix a lattice $L\in\Gr(V)$. From the construction, we see that the fiber of the determinant gerbe at a lattice $L'$ is canonically equivalent to
\begin{equation*}
    \Gg_V|_{\{L'\}}\simeq\Dc_R^\vee|_{\{L/N\}}\otimes_{\PicZR}\Dc_R|_{\{L'/N\}}
\end{equation*}
where $N$ is any common sub-lattice of $L$ and $L'$, $\Dc_R$ is the determinant torsor on $\Tate(R)^\times$, $\Dc_R^\vee$ is its dual, and the tensor product of $\PicZR$-torsors is as described in \cite[Section 2.2]{MR2562452}. Equivalently,
\begin{equation*}
    \Gg_V|_{\{L'\}}\simeq\Hom_{\PicZR}(\Dc_R|_{\{L/N\}},\Dc_R|_{\{L'/N\}}).
\end{equation*}
Given a Tate $R$-module $M$, Drinfeld \cite[Section 5.4]{MR2181808}, following Beilinson--Bloch--Esnault \cite{MR1988970}, describes the determinant torsor $\Dc_R|_{\{M\}}$ in terms of invertible modules for a Clifford algebra $\Cl_M$. In more detail, the duality pairing gives a symmetric bilinear form on the Tate module $M\oplus M^\vee$. Denote by $\Cl_M$ the $\mathbb{Z}$-graded Clifford algebra associated to this form, with the grading given by placing $M$ in degree 1, and $M^\vee$ in degree $-1$. We similarly consider graded modules for $\Cl_M$.
\begin{definition}
    Let $M$ be a Tate $R$-module, and let $\Cl_M$ be the graded Clifford algebra described above. A \emph{graded Clifford module} $F$ is a $\mathbb{Z}/2$-graded $R$-module, with a graded action of $\Cl_M$, which is continuous with the respect to the discrete topology on $F$. We say that $F$ is a \emph{graded Fermion module} if the functor $(-)\otimes_R F$, from $\mathbb{Z}/2$-graded $R$-modules to graded $\Cl_M$-modules, is an equivalence of categories.
\end{definition}
Denote by $\FermZ(M)$ the groupoid of graded Fermion modules for $Cl_M$. The groupoid $\FermZ(M)$ carries a natural action of $\PicZR$ given by tensoring a graded Fermion module with a graded line.

\begin{proposition}(Drinfeld \cite[Section 5.4]{MR2181808} following Beilinson--Bloch--Esnault \cite{MR1988970})
    Let $M$ be a Tate $R$-module. The groupoid $\FermZ(M)$ is a $\PicZR$-torsor. For a graded Fermion module $F$, and a lattice $L\subset M$, denote the annihilator of $L\oplus L^\vee\subset M\oplus M^\vee$ in $F$ by $\Delta_F(L)$. Then $\Delta_F(L)\subset F$ is a graded line, with the grading inherited from that of $F$. The assignment $L\mapsto\Delta_F(L)\in\PicZR$ defines a graded determinantal theory, and the assignment $F\mapsto\Delta_F$ induces an equivalence $\FermZ(M)\to^\simeq\Dc_R|_{\{M\}}$ of $\PicZR$-torsors.
\end{proposition}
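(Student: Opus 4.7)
The plan is to establish the proposition in three stages matching its three assertions: (I) $\FermZ(M)$ is a $\PicZR$-torsor, (II) $L\mapsto \Delta_F(L)$ takes values in graded lines and satisfies the cocycle and coherence conditions of Proposition \ref{prop:dettor}, and (III) the assignment $F\mapsto\Delta_F$ is $\PicZR$-equivariant, hence an equivalence of torsors.

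For Stage (I), non-emptiness comes from the Fock construction. Pick any lattice $L_0\subset M$. The sub-module $L_0\oplus L_0^\vee\subset M\oplus M^\vee$ is isotropic for the canonical symmetric form, so the sub-Clifford algebra it generates is the exterior algebra on $L_0\oplus L_0^\vee$, and we set $F_{L_0}:=\Cl_M\otimes_{\Cl(L_0\oplus L_0^\vee)} R$, where $R$ carries the trivial action in degree $0$. A direct Clifford-theoretic argument (using continuity of the action of $\Cl_M$ with respect to the discrete topology on any graded module) shows that the natural map $G^{L_0\oplus L_0^\vee}\otimes_R F_{L_0}\to G$ is an isomorphism for every graded Clifford module $G$; equivalently, $(-)\otimes_R F_{L_0}$ is an equivalence of categories with inverse $G\mapsto G^{L_0\oplus L_0^\vee}$. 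Thus $F_{L_0}\in\FermZ(M)$. For the torsor structure, given $F,F'\in\FermZ(M)$, the inverse functor applied to $F'$ gives a graded $R$-module $\underline{\Hom}_{\Cl_M}(F,F')$, which is automatically invertible because $(-)\otimes_R F$ is an equivalence. The $\mathbb{Z}$-grading on $\Cl_M$ (with $M$ in degree $1$ and $M^\vee$ in degree $-1$) endows this Hom with a locally constant integer degree (the relative fermion number), producing an element of $\PicZR$; the resulting tensoring action of $\PicZR$ on $\FermZ(M)$ is then free and transitive.

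For Stage (II), fix $F\in\FermZ(M)$ and a lattice $L\subset M$. Writing $F\cong (\ell,n)\otimes_R F_{L_0}$ via Stage (I), we reduce the claim that $\Delta_F(L)$ is a graded line to the corresponding statement for the Fock model $F_{L_0}$. For $L_0$ itself, $\Delta_{F_{L_0}}(L_0)=R$ concentrated in degree $0$. For a general lattice $L$, pick by Theorem \ref{thm:sato_filtered} a common sub-lattice $N\le L_0,L$; the explicit Fock description then identifies $\Delta_{F_{L_0}}(L)$ with $\detZ(L_0/N)^{-1}\otimes\detZ(L/N)$, which by the short exact sequence $N\into L\onto L/N$ and multiplicativity of $\detZ$ is a graded line whose $\mathbb{Z}$-degree equals $\rk_R(L/N)-\rk_R(L_0/N)$. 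Combining, for a nested pair $L\subset L'$ one obtains a canonical isomorphism $\Delta_F(L')\simeq\Delta_F(L)\otimes\detZ(L'/L)$ whose $\mathbb{Z}$-shift matches $\rk_R(L'/L)$. The coherence diagram \eqref{eqn:det2} for a chain $L\le L'\le L''$ is verified by using the canonical factorization $\Lambda^\bullet(L''/L)\simeq \Lambda^\bullet(L'/L)\otimes \Lambda^\bullet(L''/L')$ of the exterior algebra; for unrelated lattices one passes through a common sub-lattice, which exists by Theorem \ref{thm:sato_filtered}. This shows $L\mapsto\Delta_F(L)$ is a graded determinantal theory.

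For Stage (III), the construction immediately gives $\Delta_{(\ell,n)\otimes F}(L)\cong (\ell,n)\otimes\Delta_F(L)$ for every graded line $(\ell,n)$ and every lattice $L$, so $F\mapsto\Delta_F$ is $\PicZR$-equivariant. Any $\PicZR$-equivariant map between $\PicZR$-torsors is automatically an equivalence. The main obstacle is the careful bookkeeping of the $\mathbb{Z}$-grading: Fermion modules are only $\mathbb{Z}/2$-graded a priori, and the upgrade to a $\mathbb{Z}$-degree in $\PicZR$ (both in the Hom-line defining the torsor structure and in the annihilator $\Delta_F(L)$) is forced by the choice of a lattice and depends on keeping track of Koszul signs. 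Matching the signs in the symmetry constraint of $\PicZR$ with those produced by the Clifford multiplication in the Fock model requires a direct local-model calculation; once this grading compatibility is pinned down, the rest of the argument is essentially formal.
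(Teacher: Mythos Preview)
The paper does not give a proof of this proposition at all: it is stated as a result of Drinfeld (following Beilinson--Bloch--Esnault), with citations to \cite[Section 5.4]{MR2181808} and \cite{MR1988970}, and is then used as a black box in the subsequent discussion of the determinant gerbe. So there is no ``paper's own proof'' to compare against.

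Your three-stage argument is essentially the standard proof one finds in those references: build a Fock module from a Lagrangian to get non-emptiness, use Morita theory to identify $\underline{\Hom}_{\Cl_M}(F,F')$ as a graded line, and then check equivariance. Two small points are worth tightening. First, you say $L_0\oplus L_0^\vee$ is ``isotropic'', but for the induced module $\Cl_M\otimes_{\Cl(L_0\oplus L_0^\vee)} R$ to be a Fermion module you need it to be \emph{maximal} isotropic (Lagrangian); this is true, but should be stated, and you should be explicit that $L_0^\vee$ here means the annihilator $L_0^\perp\subset M^\vee$ rather than the abstract dual. Second, the grading issue you flag at the end is real: the definition in the paper takes Fermion modules to be $\mathbb{Z}/2$-graded while $\PicZR$ carries a $\mathbb{Z}$-grading, so the passage from $\Delta_F(L)$ to an object of $\PicZR$ genuinely requires the auxiliary choice of a reference lattice (or an equivalent datum) to fix the integer degree, and one must check that the resulting determinantal theory is independent of this choice up to canonical isomorphism. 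You acknowledge this but do not carry it out; in the cited sources this is handled carefully, and it is where most of the content lies.
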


Now let $V$ be a $2$-Tate $R$-module, and $L\subset V$ a lattice, as above. The proposition allows us to reformulate the fiber of the determinant gerbe $\Gg_V$ at a lattice $L'$ as
\begin{equation*}
    \Gg_V|_{\{L'\}}\simeq\Hom_{\PicZR}(\FermZ_{L/N},\FermZ_{L'/N}),
\end{equation*}
where $N$ is any common sub-lattice of $L$ and $L'$. From the perspective of categories of modules, $\FermZ_M$ is the groupoid of indecomposables in the category $\Mod_{\Cl_M}^{ss}$ of semi-simple $\Cl_M$-modules. A homomorphism of $\PicZR$-torsors is necessarily an equivalence, so a homomorphism $\FermZ_{L/N}\to\FermZ_{L'/N}$ corresponds to an equivalence of $R$-linear categories
\begin{equation*}
    \Mod_{\Cl_{L/N}}^{ss}\to^\simeq\Mod_{\Cl_{L'/N}}^{ss}.
\end{equation*}
According to Morita theory, the groupoid of such equivalences is equivalent to the category of invertible $\Cl_{L/N}-\Cl_{L'/N}$-bimodules.

Let $V$ be a $2$-Tate $R$-module, and let $\Gg_V\to\Gr(V)$ be the determinant gerbe on $\Gr(V)$. Fix a lattice $L\subset V$, the fiber $\Gg_V|_{\{L'\}}$ at any lattice $L'\subset V$ is equivalent to the $\PicZR$-torsor of invertible $\Cl_{L/N}-\Cl_{L'/N}$-bimodules, where $N$ is any common sub-lattice of $L$ and $L'$.

As described in Section \ref{sec:2ger}, the homomorphism $\Aut(V)\to B\PicZR$ gives rise to a central extension $\widehat{\Aut(V)}$ of the group scheme $\Aut(V)$ by the stack $\PicZR$. The constructions guarantee that $\widehat{\Aut(V)}$ acts on the determinant gerbe $\Gg_V$ and that this action lifts the action of $\Aut(V)$ on $\Gr(V)$.

Denote by $P_f^{\mathbb{Z}}(R)$ the category of finitely generated $\mathbb{Z}$-graded projective $R$-modules. The category $P_f^{\mathbb{Z}}(R)$ carries a natural action of $\PicZR$ and the homotopy quotient
\begin{equation*}
    P_f^{\mathbb{Z}}(R)//\PicZR\to B\PicZR
\end{equation*}
defines a bundle of exact categories. The pullback of this bundle along the map $\Gr(V)\to B\PicZR$ defines a bundle of exact categories $\Pc_V\to\Gr(V)$. As with $\Gg_V$, the construction ensures that $\widehat{\Aut{V}}$ acts on $\Pc_V$ and that this action lifts the action of $\Aut(V)$ on $\Gr(V)$. In particular, we obtain an $R$-linear representation of $\widehat{\Aut(V)}$ on the category of global sections of $\Pc_V\to\Gr(V)$.

\begin{definition}
    Let $V$, $\widehat{\Aut(V)}$ and $\Pc_V$ be as above. The \emph{basic representation} of $\widehat{\Aut(V)}$ is given by the action $\widehat{\Aut(V)}\circlearrowleft\Gamma(\Gr(V),\Pc_V)$.
\end{definition}

This picture suggests a more concrete description of the fibers of $\Pc_V\to\Gr(V)$. Let $V$ be a $2$-Tate $R$-module, and let $\Pc_V\to\Gr(V)$ be the bundle of exact categories associated to $\Gg_V$ as above. Given a lattice $L\subset V$, it seems plausible that the fiber of $\Pc_V|_{\{L'\}}$ at any lattice $L'\subset V$ is equivalent to the category of semi-simple $\Cl_{L/N}-\Cl_{L'/N}$-bimodules.

Frenkel and Zhu \cite{MR2972546} have previously constructed a \emph{basic representation} of the $\mathbb{C}$-points of $\widehat{\Aut(V)}$, when $V$ is the $2$-Tate space $\mathbb{C}((s))((t))$ over $\mathbb{C}$ (they denote the $\mathbb{C}$-points of $\widehat{\Aut(V)}$ by $\mathbb{GL}(V)$). In their formulation, the basic representation consists of an action of $\widehat{\Aut(V)}$ on a category of semi-simple modules for a Clifford algebra associated to a lattice of $V$. We expect the following: let $V$ be an elementary $2$-Tate space over $\mathbb{C}$. The category of $\mathbb{C}$-points of the representation $\Gamma(\Gr(V),\Pc_V)$ of the algebraic 2-group $\widehat{\Aut(V)}$ is equivalent to the basic representation of the $\mathbb{C}$-points of $\widehat{\Aut(V)}$ constructed in \cite{MR2972546}.

\bibliographystyle{amsalpha}
\bibliography{master}
\end{document}